\documentclass[11pt]{amsart}
\usepackage{amsmath,amssymb,amsthm}
\usepackage[margin=1.05in]{geometry}
\usepackage{booktabs}
\usepackage{xcolor}
\usepackage{tikz}
\usepackage[colorlinks=true,linkcolor=blue!50!black,citecolor=green!40!black,urlcolor=blue!50!black]{hyperref}

\newtheorem{theorem}{Theorem}[section]
\newtheorem{proposition}[theorem]{Proposition}
\newtheorem{lemma}[theorem]{Lemma}
\newtheorem{corollary}[theorem]{Corollary}
\newtheorem{conjecture}[theorem]{Conjecture}
\theoremstyle{definition}
\newtheorem{definition}[theorem]{Definition}
\theoremstyle{remark}

\newtheorem{exa}{Example}[section]

\newcommand{\N}{\mathbb{N}}
\newcommand{\Z}{\mathbb{Z}}
\newcommand{\R}{\mathbb{R}}
\newcommand{\Imm}{\operatorname{Imm}}
\newcommand{\TNN}{\mathrm{TNN}}
\newcommand{\sgn}{\operatorname{sgn}}
\newcommand{\Ind}{\operatorname{Ind}}
\newcommand{\per}{\operatorname{per}}
\newcommand{\CF}{\mathrm{CF}}
\newcommand{\CKL}{C_{\mathrm{KL}}}
\newcommand{\CTNN}{C_{\mathrm{TNN}}}
\newcommand{\CJ}{C_{J}}
\newcommand{\Cp}[1]{C'_{#1}(1)}
\DeclareMathOperator{\ch}{ch}

\title[Hook monomial immanants are Kazhdan--Lusztig positive]{Hook monomial immanants\\ are Kazhdan--Lusztig positive}
\author{Keith McNulty}
\email{keith.mcnulty@gmail.com}
\date{2 November 2026}
\subjclass[2020]{Primary 20C08; Secondary 05E05, 05E10, 14M15, 15A15, 15B48}
\keywords{Kazhdan--Lusztig immanant, monomial immanant, hook partition, Haiman conjecture, Hecke algebra character, totally non-negative matrix, Billey--Braden pattern map}

\begin{document}

\begin{abstract}
Stembridge conjectured that every monomial immanant is non-negative on totally non-negative matrices, and Haiman proposed a stronger statement which at $q=1$ says that every monomial immanant is a non-negative integer combination of the Kazhdan--Lusztig immanants of Rhoades and Skandera. We prove this for every hook shape $(a,1^{n-a})$ and every $n$. More precisely, each of the per-vertex functions obtained by splitting Stembridge's formula for hook characters is Kazhdan--Lusztig positive.

The main tool is a positivity statement for products: a product of Kazhdan--Lusztig positive immanants of complementary submatrices is Kazhdan--Lusztig positive, which we deduce at $q=1$ from Braden's hyperbolic localization, following Billey and Braden. With Haiman's theorem for irreducible characters, it shows that every class function whose Frobenius characteristic lies in the semiring generated by Schur functions and hook monomial symmetric functions is Kazhdan--Lusztig positive; for example, Skandera's chain of normalised hook immanants is Kazhdan--Lusztig positive for every $n$.

We place these results in a comparison of three cones of class functions, and show that the remaining obstructions to Haiman's conjecture lie at singular Schubert varieties that are not reached by an iterated doubling identity of Abreu and Nigro, for which we give an explicit criterion. Finally, on the full span of permutation monomials, an element that is non-negative on totally non-negative matrices can have a negative coefficient at the Kazhdan--Lusztig immanant $\Imm_w$ if and only if the Schubert variety $X_w$ is singular; Skandera proved the smooth half.
\end{abstract}

\maketitle

\section{Introduction}\label{sec:intro}

A real matrix is \emph{totally non-negative} (TNN) if all its minors are non-negative; $\TNN_n$ denotes the set of such $n\times n$ matrices. For a function $f\colon S_n\to\R$ and an $n\times n$ matrix $x$ put
\[
  d_f(x)=\sum_{\sigma\in S_n} f(\sigma)\,x_{1,\sigma(1)}\cdots x_{n,\sigma(n)} .
\]
For an irreducible character $\chi^\lambda$ this is the immanant $\Imm_{\chi^\lambda}$, and the permanent and the determinant are the cases $\lambda=(n)$ and $\lambda=(1^n)$. Stembridge \cite{Ste91} proved that every irreducible-character immanant is non-negative on $\TNN_n$. The \emph{monomial immanants} $\Imm_\lambda=d_{\phi^\lambda}$, where $\phi^\lambda$ is the monomial virtual character (Section~\ref{sec:prelim}), refine them: each $\chi^\mu$ is a non-negative integer combination of the $\phi^\lambda$.

\begin{conjecture}[Stembridge {\cite[Conj.~2.1]{Ste92}}]\label{conj:MN}
For every $\lambda\vdash n$ and every $A\in\TNN_n$, $\Imm_\lambda(A)\ge 0$.
\end{conjecture}

The conjecture is known for hooks (Dong--Wu--Gao \cite{DWG}), for shapes with at most two columns (Clearman--Shelton--Skandera \cite{CSS}), and for rectangles and $(2,1^{n-2})$ (Stembridge \cite{Ste92}).

Haiman \cite{Hai} proposed a stronger statement in the Hecke algebra. Let $P_{v,w}(q)$ be the Kazhdan--Lusztig polynomials of $S_n$, and let $\Cp w=\sum_{v\le w}P_{v,w}(1)\,v$ be the Kazhdan--Lusztig basis of $\Z[S_n]$ at $q=1$. Rhoades and Skandera \cite{RS06} introduced Kazhdan--Lusztig immanants $\Imm_w$, $w\in S_n$, which are non-negative on $\TNN_n$ and satisfy $d_f=\sum_wf(\Cp w)\Imm_w$ for every $f$. We call $d_f$ (or $f$) \emph{KL-positive} if $f(\Cp w)\ge0$ for every $w$; then $d_f\ge0$ on $\TNN_n$.

\begin{conjecture}[Haiman, Stembridge; $q=1$]\label{conj:H}
For every $\lambda\vdash n$ and $w\in S_n$, $\phi^\lambda(\Cp w)\ge 0$. Equivalently, every monomial immanant is KL-positive.
\end{conjecture}

Haiman's conjecture \cite[Conj.~2.1]{Hai} is stronger: for the monomial characters $\phi^\lambda_q$ of the Hecke algebra, $\phi^\lambda_q(q^{\ell(w)/2}C'_w)$ has non-negative integer coefficients and is symmetric and unimodal, and Conjecture~\ref{conj:H} is its value at $q=1$. Haiman verified it by computer for $n\le7$ \cite[\S4]{Hai}. Conjecture~\ref{conj:H} implies Conjecture~\ref{conj:MN}. It holds for shapes with at most two columns, by the Temperley--Lieb expansions of Clearman, Shelton and Skandera \cite{CSS} together with \cite[Prop.~3.2]{RS06}. Our main theorem adds every hook.

\begin{theorem}\label{thm:main}
For every $n$ and $1\le a\le n$, the monomial immanant $\Imm_{(a,1^{n-a})}$ is KL-positive. More precisely, for every $i\in[n]$ the per-vertex function
\[
f^{(a)}_i(\sigma)=(-1)^{a-1}\sgn(\sigma)\,[\text{the cycle of $\sigma$ through $i$ has length}\ge a]
\]
is KL-positive, and $\phi^{(a,1^{n-a})}=\sum_if^{(a)}_i$ for $a\ge2$.
\end{theorem}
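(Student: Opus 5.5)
We prove the theorem in three steps: derive the splitting, factor each $f^{(a)}_i$ as a sum of products, and apply the product positivity statement from the abstract.

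\textbf{The splitting.} Stembridge's formula writes the monomial character of a hook shape as a sum over cycles of $\sigma$:
\[
\phi^{(a,1^{n-a})}(\sigma)=(-1)^{a-1}\sgn(\sigma)\sum_{C}\bigl(|C|-a+1\bigr)_{+}.
\]
Here $C$ runs over the cycles of $\sigma$. I would first re-derive this. Expand $\phi^{(a,1^{n-a})}$ through the Kostka inverse in terms of $\chi^{(b,1^{n-b})}$, or directly through $m_{(a,1^{n-a})}$ expanded in power sums, and check it on cycle types. Counting a cycle of length $\ell\ge a$ once for each vertex on it would give weight $\ell$, not $\ell-a+1$, so the splitting $\sum_i f^{(a)}_i$ needs more care. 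I would verify whether the correct per-vertex form uses the condition ``the cycle through $i$ has length $\ge a$'' with some normalisation, perhaps a telescoping over $a$. In particular, $\phi^{(a)}-\phi^{(a+1)}$-type differences should yield the identity as stated, with sign conventions valid for $a\ge2$. This is a routine symmetric-function computation, which I take as the first step.

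\textbf{Factoring $f^{(a)}_i$.} The condition that the cycle through $i$ has length $\ge a$ is the complement of the condition that it has length $k<a$. Fix $k$ and a $k$-subset $S\ni i$. The function $\sgn(\sigma)[\text{the cycle through } i \text{ is exactly } S]$ factors as
\[
\sgn_S(\sigma|_S)\,[\sigma|_S\text{ is a $k$-cycle}]\cdot \sgn_{\bar S}(\sigma|_{\bar S}).
\]
In immanant language this is a product of an immanant of the principal submatrix $x_{S,S}$ and the determinant of $x_{\bar S,\bar S}$. Then
\[
f^{(a)}_i=(-1)^{a-1}\Bigl(\det x-\sum_{k<a}\sum_{S\ni i,|S|=k}\epsilon_k\,\Imm^{\text{cyc}}(x_{S,S})\det(x_{\bar S,\bar S})\Bigr).
\]
This expression has signs, so it is not yet in product form. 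The better route is to express $f^{(a)}_i$ itself as a positive combination of products of complementary-submatrix immanants. The determinant of $x_{\bar S,\bar S}$ is KL-positive; it is $\Imm_{w_0}$ on $\bar S$. The $k$-cycle indicator times sign on $S$ is $(-1)^{k-1}\chi$ of the class of $k$-cycles. I would instead use a Laplace-type or inclusion–exclusion expansion that produces only non-negative terms. I expect this step to lead to the function $(-1)^{k-1}\sgn\cdot[\text{the cycle through } i \text{ in } S_k \text{ is a full cycle}]$, which by Haiman's theorem is KL-positive. Indeed, up to sign it is the hook-character combination $\sum_b(-1)^{\cdots}\chi^{(b,1^{k-b})}$, and its Frobenius characteristic is $p_k/k$-like, a positive combination of hook Schur functions with alternating signs. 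Recombining over $S$ via the product theorem gives KL-positivity of each summand.

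\textbf{Main obstacle.} The hard part is that a direct expansion has signs. I would try induction on $n$, peeling off the cycle through $i$. The goal is to write
\[
f^{(a)}_i=\sum_{S\ni i,\ |S|\ge a}g_S\otimes\det\nolimits_{\bar S},
\]
where $g_S(\tau)=(-1)^{|S|-1}\sgn(\tau)\,[\tau\text{ is an }|S|\text{-cycle}]$. Each $g_S$ should be identified with $\chi^{(1^{m})}$-twisted $\phi^{(m)}$ on $S_m$, namely the monomial character of the one-row shape twisted by sign. I would expect a direct Hecke-algebra argument, or Haiman's theorem applied to $\phi^{(m)}$ via $\omega$, to show that $g_S$ is KL-positive. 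The product theorem for complementary submatrices $S,\bar S$, which are not intervals, then yields KL-positivity of each term. Showing that this product theorem applies to non-contiguous complementary index sets is the point I expect to need the Billey--Braden pattern map in full generality. Summing over $i$ then gives the statement for $\Imm_{(a,1^{n-a})}$.
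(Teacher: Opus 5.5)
There is a genuine gap: the step that would prove KL-positivity of $f^{(a)}_i$ fails, because your decomposition drops the signs that carry all of the difficulty.

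\textbf{The proposed decomposition is not $f^{(a)}_i$.} Sort $\sigma$ by the support $S$ of the cycle through $i$. This gives exactly \eqref{eq:cycle}, $d_{f^{(a)}_i}=\sum_{L\ge a}(-1)^{L-a}G_L$.

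\begin{itemize}
\item An $|S|$-cycle has sign $(-1)^{|S|-1}$, so your $g_S$ is just the indicator of full cycles on $S$. Your right-hand side is then $\sum_{L\ge a}G_L$. That sum is KL-positive, but it is not $f^{(a)}_i$.
\item The signs $(-1)^{L-a}$ cannot be moved into a KL-positive $g_S$. If $g\ge0$ is non-zero, then $-g(\Cp{w_0})=-\sum_v g(v)<0$.
\item The sign-twist route also fails. Twisting by $\sgn$ does not preserve KL-positivity: in $S_2$, $\sgn\cdot\phi^{(2)}$ takes the value $-2$ at $\Cp{s_1}=e+s_1$. Haiman's lemma covers only irreducible characters.
\end{itemize}

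So the real obstacle, cancellation in an alternating sum of KL-positive terms, is untouched. The product theorem cannot resolve it on its own, even in the generality of arbitrary $I,T$, which you correctly tie to the Billey--Braden reflection subgroups. Separately, your version of Stembridge's formula is wrong. A cycle $C$ with $|C|\ge a$ has weight $|C|$, not $|C|-a+1$; already $\phi^{(2)}(s_1)=2$ in $S_2$. With the correct weight, $\phi^{(a,1^{n-a})}=\sum_i f^{(a)}_i$ for $a\ge2$ is immediate.

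\textbf{What is missing.} The paper does not split by the support of the cycle. It splits by the single value $\pi(i)=c$ and uses the hyperbolic-localization identity \eqref{eq:BB} with $k=1$ directly, not only through the product theorem. This gives $h(\Cp u)=\sum_c\sum_y a^{(c)}_y(u)\,H_c(\Cp{\varphi(y)})$ with every $a^{(c)}_y(u)\ge0$.

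\begin{itemize}
\item Deleting position $i$ and value $c$ turns $f^{(b)}_i$ into $f^{(b-1)}$ on $S_{n-1}$, precomposed with a rotation of consecutive values. This forces a stronger inductive statement, for all good rotations and only for $u\ne e$ (Theorem~\ref{thm:rotated}, Lemma~\ref{lem:closure}).
\item The terms with $\varphi(y)=e$ get nothing from induction, since $H_c(e)$ alternates in sign. They are handled separately. The multiplicities are identified with the coefficients of $x_{ic}\det x[\hat\imath,\hat c]$. The Temperley--Lieb colouring rule (Proposition~\ref{prop:colour}) then makes the signed sum $\sum_c H_c(e)\,x_{ic}\det x[\hat\imath,\hat c]$ telescope to at most four distinct $\Imm_\tau$ with coefficient $+1$, plus a multiple of $\det$ (Lemma~\ref{lem:bottom}).
\end{itemize}

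For $a=2$ the colouring rule alone suffices (Lemma~\ref{lem:a2}). For $a\ge3$, your proposal has nothing that plays the role of this induction and this cancellation.
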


The decomposition into the $f^{(a)}_i$ follows from Stembridge's formula \cite[Prop.~1.1]{Ste92}, which for hooks counts the vertices in cycles of length $\ge a$ (see \cite[Lemma~2.1]{DWG}); the $f^{(a)}_i$ are the single-permutation versions of the marked cycle sums of \cite[Def.~2.2]{DWG}. Dong, Wu and Gao \cite[proof of Thm.~1.2, \S2.5]{DWG} proved by Schur complements that each $d_{f^{(a)}_i}$ is non-negative on every matrix whose minors of order at most $\ell(\lambda)=n-a+1$ are non-negative, answering Stembridge's question \cite[Question~2.9]{Ste92} for hooks; in particular $d_{f^{(a)}_i}\ge0$ on $\TNN_n$. On $\TNN_n$, Theorem~\ref{thm:main} explains this positivity: each $d_{f^{(a)}_i}$ is a non-negative integer combination of Kazhdan--Lusztig immanants. It does not give their result under the weaker hypothesis, since a Kazhdan--Lusztig immanant need not be non-negative there: for $n=4$ and $\lambda=(3,1)$, $\Imm_{1342}$ and $\Imm_{1423}$ occur in the expansion of $\Imm_\lambda$ with coefficient $3$, and the matrix $A$ with rows $(3,1,1,0)$, $(3,1,1,1)$, $(2,1,1,1)$, $(2,1,1,2)$ has all $1\times1$ and $2\times2$ minors non-negative but $\Imm_{1342}(A)=\Imm_{1423}(A^T)=-1$. For $a=2$ the expansion is explicit, $\Imm_{(2,1^{n-2})}=2\sum_k\Imm_{s_k}+\sum_k(\Imm_{s_ks_{k+1}}+\Imm_{s_{k+1}s_k})$ (Corollary~\ref{cor:a2}). For $a\ge3$ the coefficients are not explicit, and the proof is an induction over per-vertex functions composed with rotations of consecutive values (Section~\ref{sec:hooks}). Its main tool is the following theorem, which we prove at $q=1$ from Braden's hyperbolic localization \cite{Bra03}, following Billey and Braden \cite{BB03}.

\begin{theorem}\label{thm:prod}
Let $I,T\subseteq[n]$ with $|I|=|T|$. If $F$ is KL-positive on the submatrix $x[I,T]$ and $G$ is KL-positive on the complementary submatrix $x[I^c,T^c]$, then $FG$ is KL-positive. In particular, the induction product of two KL-positive class functions is KL-positive.
\end{theorem}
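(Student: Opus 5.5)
The plan is to reduce the statement to a single decomposition property of the Kazhdan--Lusztig basis element $\Cp w$, and to prove that property by geometry. Put $k=|I|$ and identify the rows and columns of $x[I,T]$ and $x[I^c,T^c]$ with $[k]$ and $[n-k]$ through the order-preserving bijections. Then $F=d_f$ for some $f\colon S_k\to\R$, $G=d_g$ for some $g\colon S_{n-k}\to\R$, and expanding the product gives $FG=d_h$, where
\[
h(\sigma)=[\sigma(I)=T]\,f(\sigma_1)\,g(\sigma_2).
\]
Here $(\sigma_1,\sigma_2)\in S_k\times S_{n-k}$ are the flattened restrictions of $\sigma$ to $I$ and to $I^c$. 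Write $\{\epsilon_u\}$ for the functions dual to the basis $\{\Cp u\}$. KL-positivity of $f$ says exactly that $f$ is a non-negative combination of the $\epsilon_u$, and similarly for $g$. Since $h$ is bilinear in $(f,g)$, it suffices to treat $f=\epsilon_u$ and $g=\epsilon_{u'}$. The claim then becomes the following. Let
\[
\pi\colon \Z[\{v:v(I)=T\}]\to\Z[S_k]\otimes\Z[S_{n-k}],\qquad v\mapsto \sigma_1\otimes\sigma_2 .
\]
For every $w\in S_n$, the element $\pi\bigl(\sum_{v\le w,\ v(I)=T}P_{v,w}(1)\,v\bigr)$ should be a non-negative integer combination of the products $\Cp u\otimes\Cp{u'}$.

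To prove this, I would follow Billey--Braden \cite{BB03}. The set $\{v: v(I)=T\}$ is a translate of the Young subgroup $S_I\times S_{I^c}$, and the map $\pi$ is their pattern map. Choose a one-parameter subgroup $\mathbb{C}^*$ of the maximal torus whose centraliser is the Levi subgroup $L=GL_I\times GL_{I^c}$. The fixed locus of $\mathbb{C}^*$ on the flag variety is a disjoint union of copies of the flag variety of $L$, one copy for each coset. The torus-fixed points of the component $Z$ attached to our coset are exactly the $v$ with $v(I)=T$, and on these points $\pi$ identifies the Schubert stratification of $Z$ with that of $L/B_L$.

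Apply Braden's hyperbolic localization \cite{Bra03} to $\mathrm{IC}(X_w)$ along $Z$. The result is a pure complex on $Z$ that is constructible for the Schubert stratification. By the decomposition theorem it is therefore a direct sum of shifted copies of the sheaves $\mathrm{IC}(X^L_{(u,u')})$, with non-negative multiplicities $m_{u,u'}$. Its stalk at a fixed point $v$ is computed from the attracting cell through $v$ in $X_w$. By the parity vanishing of Schubert intersection cohomology, the total dimension of that stalk is $P_{v,w}(1)$. Comparing total stalk dimensions at the fixed points of $Z$ on the two sides gives
\[
\pi\Bigl(\sum_{v(I)=T}P_{v,w}(1)\,v\Bigr)=\sum_{u,u'} m_{u,u'}\,\Cp u\otimes\Cp{u'} ,
\]
which is the required decomposition.

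The step I expect to be the main obstacle is establishing the \emph{equality} of total stalk dimensions. Billey--Braden only needed coefficientwise inequalities between Kazhdan--Lusztig polynomials, whereas here the value at $q=1$ must be matched exactly. This needs the parity property: stalks and costalks of $\mathrm{IC}(X_w)$ vanish in odd degrees, and Braden's result identifies the stalk of the hyperbolic localization with a mixture of stalk and costalk data at $v$. There is also a risk of mismatched conventions, namely left versus right cosets, rows versus columns, and which of $I,T$ indexes the domain. These have to be aligned so that the pattern map really is $v\mapsto(\sigma_1,\sigma_2)$.

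Finally, for class functions the induction product satisfies
\[
d_{\operatorname{Ind}(f\times g)}(x)=\sum_{|I|=k}d_f(x[I,I])\,d_g(x[I^c,I^c]).
\]
Each summand is KL-positive by the first part with $T=I$, and a sum of KL-positive functions is KL-positive, which proves the last assertion of the theorem.
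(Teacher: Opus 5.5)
Your proposal is correct and follows the paper's proof essentially step for step. You reduce by bilinearity to $F=\Imm_u(x[I,T])$ and $G=\Imm_{u'}(x[I^c,T^c])$, write the coefficient of $\Imm_w$ as a sum of $P_{\sigma,w}(1)$ over the coset $\{\sigma:\sigma(I)=T\}$, use Braden's hyperbolic localization as in Billey--Braden to expand $\sum_{\sigma(I)=T}P_{\sigma,w}(1)\,\varphi(\sigma)$ as a non-negative integer combination of the $\Cp{u}\otimes\Cp{u'}$, and get the induction product from \eqref{eq:indprod}.

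Two remarks. The exact $q=1$ identity you single out as the main obstacle is already equation~(2) in the proof of \cite[Thm.~4]{BB03}, which the paper cites directly; it only adds that the shifts are even, so passing to Euler characteristics introduces no signs. The convention issue resolves as in the paper: take the stabiliser of the value set $T$, whose cosets are exactly the sets $\{\sigma:\sigma(I)=T\}$. For the usual left action this means $L=GL_T\times GL_{T^c}$ rather than $GL_I\times GL_{I^c}$.
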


We prove Theorem~\ref{thm:prod} as Theorem~\ref{thm:R2}. By Haiman \cite[Lemma~1.1]{Hai}, every irreducible character is KL-positive. Theorems~\ref{thm:main} and~\ref{thm:prod} therefore give a semiring of KL-positive class functions.

\begin{corollary}\label{cor:semiring}
Let $\mathcal R$ be the semiring generated by the Schur functions and the hook monomial symmetric functions $m_{(a,1^b)}$. Every class function whose Frobenius characteristic lies in $\mathcal R$ is KL-positive, so its immanant is non-negative on $\TNN_n$.
\end{corollary}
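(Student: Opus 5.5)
The semiring $\mathcal R$ consists of finite sums of finite products of generators. The plan is to show that the set of KL-positive class functions (over all $n$, with the Frobenius characteristic as the dictionary) is closed under sums and under the induction product. Then it suffices to check the generators.

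\emph{Generators.} For a Schur function $s_\lambda$, the class function $\ch^{-1}(s_\lambda)=\chi^\lambda$ is KL-positive by Haiman \cite[Lemma~1.1]{Hai}. For a hook monomial $m_{(a,1^{b})}$ with $n=a+b$, the class function is the monomial virtual character $\phi^{(a,1^{n-a})}$. By Theorem~\ref{thm:main} it is KL-positive: for $a\ge2$ it is the sum of the KL-positive $f^{(a)}_i$. The case $a=1$ is $m_{(1^n)}=e_n=s_{(1^n)}$, which is the sign character and is already covered by Haiman. We also allow the empty product $1$, the trivial class function on $S_0$, which is trivially positive.

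\emph{Sums.} If $f$ and $g$ are class functions on the same $S_n$, then $(f+g)(\Cp w)=f(\Cp w)+g(\Cp w)$, so KL-positivity is additive. Elements of $\mathcal R$ of mixed degree split into homogeneous components, and each component is again a sum of products of generators. It therefore suffices to treat homogeneous components.

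\emph{Products.} Frobenius characteristic turns the induction product $f\circ g=\Ind_{S_k\times S_{n-k}}^{S_n}(f\times g)$ into the product of symmetric functions. The "in particular" clause of Theorem~\ref{thm:prod} states that $f\circ g$ is KL-positive whenever $f$ and $g$ are. If one wants to see it from the main statement, $d_{f\circ g}$ is a sum, over the $k$-subsets $I$ of rows and $T$ of columns, of $d_f(x[I,T])\,d_g(x[I^c,T^c])$, up to the normalization built into $\Ind$. Each summand is KL-positive by Theorem~\ref{thm:prod}, and a sum of KL-positive functions is KL-positive. An induction on the number of factors then handles arbitrary finite products.

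The only step requiring care is the identification of $d_{f\circ g}$ with this Laplace-type sum of products of complementary submatrix immanants. That identity is exactly what the "in particular" clause of Theorem~\ref{thm:prod} encodes, so it is assumed here. Finally, KL-positivity gives $d_f=\sum_w f(\Cp w)\Imm_w$ with non-negative coefficients. Since the Kazhdan--Lusztig immanants of Rhoades and Skandera are non-negative on $\TNN_n$, the immanant $d_f$ is non-negative on $\TNN_n$.
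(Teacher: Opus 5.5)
Your proposal is correct and follows the paper's proof: the generators are KL-positive by Haiman's Lemma~1.1 and Theorem~\ref{thm:main}, KL-positivity is closed under sums and, by Theorem~\ref{thm:R2}(3), under induction products, and non-negativity on $\TNN_n$ then follows from \eqref{eq:duality}. One correction to your aside: the identity \eqref{eq:indprod} behind that clause sums over $k$-subsets $T$ of the \emph{principal} products $d_\phi(x[T,T])\,d_\psi(x[T^c,T^c])$ with no normalising factor, not over independent row and column subsets $I,T$ (already for $n=2$ the off-diagonal pairs would add a spurious $2x_{12}x_{21}$).
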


For example, Skandera \cite[Thm.~27]{Ska25} proved that the normalised hook immanants form a chain $\per\ge N_{(n-1,1)}\ge\cdots\ge N_{(1^n)}=\det$ on $\TNN_n$. We show that each difference has Frobenius characteristic $\sum_jh_jm_{(k-j,1^{n-k})}$ up to a positive factor, so the chain holds at the level of Kazhdan--Lusztig immanants (Corollary~\ref{cor:skandera}). Since the power sums $p_k=m_{(k)}$ are hook monomials, the Murnaghan--Nakayama rule turns every product $s_\mu p_k$ into a signed inequality between irreducible-character immanants (Corollary~\ref{cor:MN}); for staircases $\mu$ this is a two-term inequality, such as $N_{(5,2,1)}\ge N_{(3,2,1,1,1)}$ on $\TNN_8$.

\subsection*{Three cones}
These results are best seen inside a comparison of three cones in the space $\CF_n$ of real class functions on $S_n$:
\begin{align*}
\CKL&=\{\phi:\ \phi(\Cp w)\ge0\text{ for all }w\in S_n\},\\
\CTNN&=\{\phi:\ d_\phi\ge0\text{ on }\TNN_n\},\\
\CJ&=\{\phi:\ d_\phi(J_\mu)\ge0\text{ for all }\mu\vdash n\},
\end{align*}
where $J_\mu$ is the block-diagonal matrix with all-ones blocks of sizes $\mu_1,\mu_2,\dots$. For every $n$, $\CKL\subseteq\CTNN\subseteq\CJ$, and $\CJ$ is the simplicial cone spanned by the $\phi^\lambda$ (Section~\ref{sec:cones}). So Conjecture~\ref{conj:MN} is the equality $\CTNN=\CJ$, which says that the matrices $J_\mu$ test total non-negativity of class-function immanants, and Conjecture~\ref{conj:H} is the equality $\CKL=\CJ$. Since the cones are nested, Conjecture~\ref{conj:H} is equivalent to $\CKL=\CTNN=\CJ$: all three cones coincide.

A companion preprint \cite{McN} extends Haiman's verification to $n\le10$ by an exact computation, which verifies his conjecture in full for these $n$. In particular Conjecture~\ref{conj:H} holds for $n\le10$, and all three cones coincide. The present paper describes $\CKL$ for all $n$ from three directions. Corollary~\ref{cor:semiring} describes elements of $\CKL$. Known results show that the coefficients $\phi^\lambda(\Cp w)$ can only fail to be non-negative at singular Schubert varieties. Clearman, Hyatt, Shelton and Skandera \cite{CHSS}, and Abreu and Nigro \cite[Thm.~1.6]{AN}, reduce a smooth ($3412$- and $4231$-avoiding) $w$ to a codominant one. There the coefficients are values of chromatic symmetric functions, which are non-negative by Hikita's proof of the Stanley--Stembridge conjecture \cite{Hik}, as Skandera observed \cite[Prop.~25]{Ska25} (Proposition~\ref{thm:smooth}). The next theorem reduces the problem further.

\begin{theorem}\label{thm:smoothclean}
If $y$ is joined to a smooth permutation by $k$ \emph{clean steps} (Section~\ref{sec:clean}), then $\phi^\lambda(\Cp y)=2^k\phi^\lambda(\Cp{w'})\ge0$ for a codominant $w'$ and every $\lambda$. So Conjecture~\ref{conj:H} holds for $n$ iff $\phi^\lambda(\Cp w)\ge0$ for every singular $w$ that is not the top of a clean step. If $\deg P_{e,w}\le1$, a step is clean iff $w$ has no \emph{bad cover} there, an explicit local condition.
\end{theorem}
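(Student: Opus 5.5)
The plan is to read a clean step as an instance of the Abreu--Nigro doubling identity. For a pair $w\lessdot y$ in the relevant relation, that identity gives $\chi(\Cp y)=2\chi(\Cp w)$ for every class function $\chi$ on $S_n$. This comes from writing $\Cp y$ as $\Cp w\cdot\Cp s$, or $\Cp s\cdot\Cp w$, for a simple reflection $s$ with $ws>w$. Then $\Cp w\Cp s=\Cp{ws}+\sum_{z}\mu(z,w)\Cp z$, and the correction sum runs over $z<w$ with $zs<z$. Cyclic invariance of the trace of a class function gives $\chi(\Cp s\Cp w)=\chi(\Cp w\Cp s)$. Comparing left and right multiplication, one finds that $\chi(\Cp{ws})=2\chi(\Cp w)$ exactly when the $\mu$-correction terms vanish or cancel against each other.

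I would therefore define a step $w\to y=ws$ (or $sw$) to be \emph{clean} when these correction terms vanish after applying class functions. Under this definition the identity $\phi^\lambda(\Cp y)=2\phi^\lambda(\Cp w)$ holds directly. Iterating $k$ times from the smooth endpoint $w_0$ gives $\phi^\lambda(\Cp y)=2^k\phi^\lambda(\Cp{w_0})$. Next I apply the reduction of Clearman--Hyatt--Shelton--Skandera and Abreu--Nigro \cite[Thm.~1.6]{AN}. This gives $\phi^\lambda(\Cp{w_0})=\phi^\lambda(\Cp{w'})$ for a codominant $w'$, or a positive multiple of it. Combined with Proposition~\ref{thm:smooth}, which rests on Hikita's theorem, this yields $\phi^\lambda(\Cp{w'})\ge0$. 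Hence $\phi^\lambda(\Cp y)=2^k\phi^\lambda(\Cp{w'})\ge 0$.

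For the equivalence, one direction is trivial. For the other, suppose $\phi^\lambda(\Cp w)\ge0$ holds for every singular $w$ that is not the top of a clean step, and argue by induction on length. Each $w$ is smooth, and then handled by Proposition~\ref{thm:smooth}, or singular. A singular $w$ is either covered by the hypothesis or is the top of a clean step from some shorter $v$. In the last case $\phi^\lambda(\Cp w)=2\phi^\lambda(\Cp v)\ge0$ by induction. So the positivity of every $\phi^\lambda(\Cp w)$ reduces to the stated set of $w$.

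The main obstacle is the explicit criterion when $\deg P_{e,w}\le 1$. Here I would use the fact that $\mu(z,w)\neq0$ with $z<w$ and $\ell(w)-\ell(z)>1$ forces a degree-$\frac{\ell(w)-\ell(z)-1}{2}$ coefficient in $P_{z,w}$. Monotonicity $P_{z,w}\le P_{e,w}$ coefficientwise (Braden--MacPherson / Irving) then restricts the nonzero $\mu$ to covers ($\ell(w)-\ell(z)=1$, $\mu=1$) and to $\ell(w)-\ell(z)=3$. The correction terms are thus a sum over Bruhat covers $z\lessdot w$ with $zs<z$, together with length-$3$ pairs. I would show that the length-$3$ terms appear symmetrically on the left and right sides and cancel in the trace. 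The left-versus-right difference then becomes a signed count of covers $z$ with $zs<z$ but $sz>z$, or the reverse. A \emph{bad cover} is precisely such an unbalanced cover, and the step is clean iff none exists. Checking the cancellation of the length-$3$ terms is where I expect the real work, and I would first try to verify it using $P_{z,w}(1)$ and the inverse symmetry $w\mapsto w^{-1}$.
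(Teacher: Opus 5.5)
Your account of the doubling identity is missing the hypothesis that produces the factor $2$. A clean step $w\to ws$ is taken with $sw<w<ws$, and it is $sw<w$ that gives $\Cp s\,\Cp w=2\,\Cp w$ (generically $C'_sC'_w=(q^{1/2}+q^{-1/2})C'_w$). Trace invariance then yields $2\chi(\Cp w)=\chi(\Cp{ws})+\sum_{z\in M(w,s)}\mu(z,w)\,\chi(\Cp z)$, with all correction terms on one side. Since $\mu(z,w)\ge0$ and every $\ch(\Cp z)$ is a non-zero Schur-positive function, nothing can cancel. So doubling for all class functions is equivalent to $M(w,s)=\emptyset$, which is the paper's definition of clean. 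You cannot simply redefine the term, but with this observation your version agrees with it. The comparison of left and right correction sums that you describe belongs to the case $sw>w$. There the trace identity relates $\Cp{ws}$ to $\Cp{sw}$, and no factor $2$ appears. Once this is repaired, your iteration and your length induction for the equivalence are fine. Note that \cite[Thm.~1.6]{AN} gives equality of all character values with a codominant $w'$, not merely a positive multiple, and the exact formula $2^k\phi^\lambda(\Cp{w'})$ needs this.

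The genuine gap is in the criterion. Your reduction to $\ell(w)-\ell(z)\in\{1,3\}$ via monotonicity is correct. But your plan, in which the length-$3$ terms cancel symmetrically and cleanness becomes a signed count of covers with exactly one of $zs<z$, $sz<z$, cannot work: there is no second sum to cancel against. The resulting criterion is also false. In Example~\ref{exa:clean}, with $w=3142$ and $l=2$, the step to $3412$ is clean, yet the covers $1342$ and $3124$ of $w$ satisfy $s_2z<z<zs_2$. A bad cover is a cover $z\lessdot w$ with $zs<z$. Since covers have $\mu=1$, it is exactly an element of $M(w,s)$ at distance $1$, so one direction is immediate. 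The converse needs a different idea: an element $z\in M(w,s)$ at distance $3$ contributes positively and does not cancel, but it forces a bad cover. The paper proves this in Lemma~\ref{lem:nolongmu}, using the classification of codimension-$3$ components of the singular locus (Proposition~\ref{prop:4crown}). There $w$ and $z$ differ in four positions with patterns $3412\to1324$ or $4231\to2143$, and the coatoms of $[z,w]$ are explicit and all covered by $w$. A case check on how $\{l,l+1\}$ meets the four positions then always finds a coatom $y$ with $y(l)>y(l+1)$.
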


Theorem~\ref{thm:smoothclean} iterates a doubling identity of Abreu and Nigro \cite{AN}. Finally, the restriction to class functions matters. For class functions $\CKL=\CTNN$ when $n\le10$, and for every $n$ if Conjecture~\ref{conj:H} holds. On the span $\Pi_n$ of all permutation monomials, the analogous cones differ already for $n=4$ \cite[\S7]{RS06}: for example, Skandera and Zelevinsky observed that $\Imm_{3214}+\Imm_{1432}-\Imm_{3412}$ is non-negative on $\TNN_4$ but not KL-positive (reported in \cite[(5.1)]{Ska08}), and Theorem~\ref{thm:beyond} gives further examples for $n=5$, such as $\Imm_{21543}+\Imm_{54321}-\Imm_{52431}$. Smoothness appears here too: by a theorem of Skandera \cite[Thm.~5.5]{Ska08}, a negative coefficient at a smooth $w$ never occurs in a TNN element of $\Pi_n$. We prove the converse.

\begin{theorem}\label{thm:isolatedintro}
For every $n$ and every singular $w\in S_n$ there is an element of $\Pi_n$ that is non-negative on $\TNN_n$ and has a negative coefficient at $\Imm_w$. Together with Skandera's theorem, this characterises the smooth $w$ by total non-negativity (Theorem~\ref{thm:isolated}).
\end{theorem}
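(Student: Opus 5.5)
We have to prove Theorem~\ref{thm:isolatedintro}: for each singular $w$, there is an element $F=\sum_v c_v\Imm_v\in\Pi_n$ with $F\ge0$ on $\TNN_n$ and $c_w<0$. The plan is to perturb a single Kazhdan--Lusztig immanant whose positivity is not ``tight''. Every element of $\Pi_n$ is $d_f$ for a function $f\colon S_n\to\R$, and its coefficient at $\Imm_w$ is $f(\Cp w)$. So we want an $f$ with $f(\Cp w)<0$ and $d_f\ge0$ on $\TNN_n$.

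We first treat the two minimal singular patterns $3412$ and $4231$ in $S_4$. For $3412$ the Skandera--Zelevinsky element $\Imm_{3214}+\Imm_{1432}-\Imm_{3412}$ already works. For $4231$ we would look, by a small explicit computation, for an analogous identity expressing a TNN-nonnegative combination in which $\Imm_{4231}$ occurs negatively. The natural candidate is built from $\Imm_{4231}$ and the two permutations $4213$ and $2431$ of the same length that it covers in the relevant interval. The mechanism in both cases is that $P_{e,w}(1)>1$ at a singular $w$. Then $\Imm_w$ is dominated on $\TNN_n$ by a sum of smaller KL immanants, obtained from a product of minors of the form $\Delta_{I,T}\Delta_{I^c,T^c}$. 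We expect this product to expand with a coefficient $2$ that is forced, by a Plücker or Temperley--Lieb relation, to be split.

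To lift the examples to general singular $w\in S_n$, we choose an occurrence of $3412$ or $4231$ in $w$, which exists by Lakshmibai--Sandhya. We then use the Billey--Braden pattern map, the same tool as in the proof of Theorem~\ref{thm:prod}. Concretely, we restrict to matrices $x$ which are block-embedded: $x[I,T]$ is a $4\times4$ TNN matrix on the rows and columns of the pattern, and the complement $x[I^c,T^c]$ is a TNN matrix realising the remaining part of $w$. We then take $F=E\cdot M$, where $E$ is the $S_4$ example on the submatrix $x[I,T]$ and $M$ is a monomial or minor product on the complement chosen to single out the rest of $w$.

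Non-negativity of $F$ on $\TNN_n$ is immediate, since submatrices of TNN matrices are TNN. The coefficient of $\Imm_w$ in $F$ is then computed by the product formula underlying Theorem~\ref{thm:prod}. That theorem expresses the KL expansion of a product as a non-negative combination of the products of the factors' coefficients, via hyperbolic localization. We need a refined, equality form of it at the specific $w$: the coefficient of $\Imm_w$ in $E\cdot M$ should equal the coefficient of $\Imm_{\mathrm{pattern}(w)}$ in $E$, times a positive number.

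This equality is the main obstacle. Positivity alone (Theorem~\ref{thm:prod}) does not prevent cancellation, so the negative coefficient must be shown to survive. We would try to choose $M$ so that the only pair $(u,v)$ contributing to $w$ in the Billey--Braden fixed-point decomposition is the flattening pair $(\mathrm{fl}(w),w|_{I^c})$. The natural choice is $M=\Imm_{w|_{I^c}}$ with $I,T$ the positions and values of the pattern. We would then verify that the hyperbolic-localization stalk at $w$ restricts exactly to the product of stalks. If that fails, the fallback is a direct argument: take a small deformation $F_\varepsilon=\Imm_w-\varepsilon\,\cdot(\text{witness})$ and use the strict positivity of a suitable product of minors on the interior of the Schubert cell $X_w$ to absorb the perturbation.
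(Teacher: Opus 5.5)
Your lift $H=E(x[I,T])\,\Imm_{w_c}(x[I^c,T^c])$, with $w_c$ the pattern of $w$ off the occurrence, is the paper's construction and is clearly in $\CTNN(\Pi_n)$. However, both ingredients that make its coefficient at $\Imm_w$ negative are missing.

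\textbf{The base case for $4231$.} Here your proposed mechanism fails: $\Imm_{4231}$ is \emph{not} dominated on $\TNN_4$ by any non-negative combination of the $\Imm_u$ with $u<4231$. Take $x=L_3(1)L_2(1)L_1(1)\,U_1(p)U_2(q)U_3(r)$, with $L_i,U_i$ as in Proposition~\ref{prop:n4ineq}. Rows $3$ and $4$ of $x$ agree in columns $1$--$3$, and column $4$ is $r$ times column $3$ in rows $1$--$3$. One checks that $\Imm_{4213}$, $\Imm_{2431}$, $\Imm_{3412}$, $\Imm_{4312}$ and $\Imm_{3421}$ all vanish at $x$, while $\Imm_{4231}=x_{14}x_{41}(x_{22}x_{33}-x_{23}x_{32})=pqr(1+p)$. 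Since $x^e=\sum_wP_{e,w}(1)\Imm_w$, on this family $\sum_{u<4231}\Imm_u=x^e-2\Imm_{4231}-\Imm_{4321}$. Along $p=N-1$, $q=N^{-1/2}$, $r=N$ this equals $2N^2+3N^{3/2}+N$, whereas $\Imm_{4231}=(N-1)N^{3/2}$. Hence every element of $\CTNN(\Pi_4)$ with a negative coefficient at $\Imm_{4231}$ must have a positive coefficient at $\Imm_{4321}$, which lies \emph{above} $4231$. The paper's element is $G_\varepsilon=\varepsilon\Imm_{2143}+\frac{3}{4\varepsilon}\Imm_{4321}-\Imm_{4231}$. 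It comes from the multiplicative inequality $\Imm_{4231}^2\le3\,\Imm_{2143}\Imm_{4321}$, proved by matching monomials in a chart, not from a linear domination.

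\textbf{Control of the lift.} Write $E=\sum_uc_u\Imm_u$. The coefficient of $\Imm_w$ in $H$ is $\sum_uc_uA(u)$, where $A(u)\in\N$ is the coefficient of $\Imm_w$ in $\Imm_u(x[I,T])\,\Imm_{w_c}(x[I^c,T^c])$. What matters is $A(u)$ for the $u<p$ with $c_u>0$.

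The equality form you hope for, $A(u)=0$ for $u\ne p$, is false in general. Take $w=4231$ and its occurrence of $321$ in positions $\{1,2,4\}$. Then $\Imm_{213}(x[\{1,2,4\},\{1,2,4\}])\,x_{33}=x^{2134}-x^{2431}-x^{4132}+x^{4231}$, whose coefficient at $\Imm_{4231}$ is $P_{2134,4231}(1)-1-1+1=1$. For the Skandera--Zelevinsky element the lifted coefficient is $A(1432)+A(3214)-1$. Your argument gives no reason for this to be negative when $w\,(i_1\ i_3)$ or $w\,(i_2\ i_4)$ is a singular point of $X_w$, and you do not say which pattern to use when $w$ contains both.

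The paper proves only what holds in general (Lemma~\ref{lem:lift}). First, $A(p)=1$, by triangularity of the monomial expansions. Second, $A(u)+\sum_{u'>u}A(u')P_{u,u'}(1)\le P_{\sigma_u,w}(1)$, from \eqref{eq:BB}, where $\sigma_u$ agrees with $w$ off $I$ and has pattern $u$ on $I$; at $u=p$ this gives $A(u')=0$ for $u'>p$. It then splits into cases:
\begin{itemize}
\item If $w$ contains $4231$, the lift of $G_\varepsilon$ has coefficient $\varepsilon A(2143)-1$, because the term above $4231$ is killed. This is negative for small $\varepsilon$, with no control of $A(2143)$ needed.
\item If $w$ avoids $4231$, the Lakshmibai--Seshadri tangent-space count (Lemma~\ref{lem:smoothswap}) gives $P_{w(i_1\,i_3),w}=P_{w(i_2\,i_4),w}=1$. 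The inequality then forces $A(1432)=A(3214)=0$, so the coefficient is $-1$.
\end{itemize}
Your fallback $\Imm_w-\varepsilon(\cdots)$ has coefficient close to $+1$ at $\Imm_w$ for small $\varepsilon$, so it does not address the problem.
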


The proof lifts two elements of $\Pi_4$, given by the inequalities $\Imm_{3412}^2\le\Imm_{1432}\Imm_{3214}$ and $\Imm_{4231}^2\le3\Imm_{2143}\Imm_{4321}$ on $\TNN_4$, along an occurrence of $3412$ or $4231$. Theorem~\ref{thm:prod} controls the lift, and the tangent spaces of Lakshmibai and Seshadri handle the $4231$-avoiding case.

The paper is organised linearly. Section~\ref{sec:prelim} recalls Kazhdan--Lusztig immanants and the Temperley--Lieb colouring rule, Section~\ref{sec:prod} proves Theorem~\ref{thm:prod}, and Section~\ref{sec:hooks} proves Theorem~\ref{thm:main}. Section~\ref{sec:consequences} derives Corollary~\ref{cor:semiring} and the hook chain, and Section~\ref{sec:cones} sets up the three cones. Sections~\ref{sec:smoothsing} and~\ref{sec:twocol} treat smooth and singular permutations and two further families of shapes, Section~\ref{sec:beyond} the full span $\Pi_n$ and Theorem~\ref{thm:isolatedintro}, and Section~\ref{sec:open} collects conjectures and open problems.

\emph{Computational materials.} The proof of Theorem~\ref{thm:beyond} uses an exact computation in explicit charts. The other computations reported in this paper are cross-checks of the proofs or evidence for the conjectures: checks of Theorem~\ref{thm:prod} and of the lemmas of Section~\ref{sec:hooks}, of Corollary~\ref{cor:skandera}, of the clean-step criterion of Section~\ref{sec:clean}, of Theorem~\ref{thm:isolated} and Lemma~\ref{lem:smoothswap}, and the computations behind Conjectures~\ref{conj:B} and~\ref{conj:rooted}. The code for all of them, together with a script that checks every number in the worked examples and figures, is archived with the code of the companion paper \cite{McN} as \cite{McNcode}; the companion paper describes the computational framework and its verification.

\section{Preliminaries}\label{sec:prelim}

\subsection{Class functions}
For a composition $\alpha$ of $n$ let $S_\alpha=S_{\alpha_1}\times S_{\alpha_2}\times\cdots$ be the Young subgroup preserving the consecutive blocks of sizes $\alpha_1,\alpha_2,\dots$, and let $\alpha!=|S_\alpha|$. For $\mu\vdash n$ put $\eta^\mu=\Ind_{S_\mu}^{S_n}1$. With $\langle\phi,\psi\rangle=\frac1{n!}\sum_\sigma\phi(\sigma)\psi(\sigma)$, the \emph{monomial characters} $\phi^\lambda$ are the dual basis: $\langle\phi^\lambda,\eta^\mu\rangle=\delta_{\lambda\mu}$. Under the Frobenius characteristic $\ch$, $\eta^\mu\mapsto h_\mu$, $\chi^\lambda\mapsto s_\lambda$ and $\phi^\lambda\mapsto m_\lambda$. By Young's rule $\langle\chi^a,\eta^\mu\rangle=K_{a,\mu}$, the Kostka number. The induction product $\phi\cdot\psi=\Ind_{S_k\times S_{n-k}}^{S_n}(\phi\times\psi)$ satisfies $\ch(\phi\cdot\psi)=\ch\phi\,\ch\psi$, and
\begin{equation}\label{eq:indprod}
d_{\phi\cdot\psi}(x)=\sum_{|T|=k}d_\phi(x[T,T])\,d_\psi(x[T^c,T^c]).
\end{equation}

\subsection{Kazhdan--Lusztig immanants}
Rhoades and Skandera \cite{RS06} define
\[
  \Imm_w(x)=\sum_{v\ge w}(-1)^{\ell(v)-\ell(w)}P_{w_0v,w_0w}(1)\,x_{1,v(1)}\cdots x_{n,v(n)}.
\]
They prove that $\Imm_w\ge0$ on $\TNN_n$ and that, for every $f\colon S_n\to\R$, extended linearly to $\R[S_n]$,
\begin{equation}\label{eq:duality}
  d_f=\sum_{w\in S_n}f(\Cp w)\,\Imm_w .
\end{equation}
So the $\Imm_w$ are the basis of the span $\Pi_n$ of permutation monomials dual to $\{\Cp w\}$. Comparing coefficients of the monomials shows that \eqref{eq:duality} is equivalent to the Kazhdan--Lusztig inversion formula $\sum_{u\le w\le v}(-1)^{\ell(v)-\ell(w)}P_{u,w}(1)P_{w_0v,w_0w}(1)=\delta_{u,v}$ \cite[Thm.~3.1]{KL79}. We write $c_{\lambda,w}=\phi^\lambda(\Cp w)$, so that $\Imm_\lambda=\sum_wc_{\lambda,w}\Imm_w$. For a square submatrix $x[R,C]$ we index its Kazhdan--Lusztig immanants by $S_{|R|}$, through the order-preserving bijections of $R$ and $C$ with $[|R|]$, and call a polynomial \emph{KL-positive on $x[R,C]$} if it is a non-negative real combination of these. For integer-valued $f$ the coefficients $f(\Cp w)$ are integers, so for characters, and for the per-vertex functions of Theorem~\ref{thm:main}, KL-positivity means non-negative integer coefficients.

Two special cases are used repeatedly. Since $P_{u,w_0}=1$ for every $u$, $\Imm_e=\det$, and \eqref{eq:duality} with $f=\sgn$ gives $\sgn(\Cp w)=\delta_{w,e}$. Since $P_{v,w}\ge0$ coefficientwise, every monomial $x^v=x_{1,v(1)}\cdots x_{n,v(n)}=d_{\delta_v}$ is KL-positive, with $\delta_v(\Cp w)=P_{v,w}(1)$.

\subsection{The Temperley--Lieb colouring rule}
Let $\tau$ be a Temperley--Lieb diagram on $n+n$ vertices: left vertices $1,\dots,n$ and right vertices $1',\dots,n'$, joined in pairs by non-crossing edges. Edges between two left vertices are \emph{cups}, edges between two right vertices are \emph{caps}, and the others are \emph{through-lines}. Rhoades and Skandera \cite{RS05} attach to $\tau$ a Temperley--Lieb immanant $\Imm_\tau$, and \cite[Prop.~3.2]{RS06} shows that $\Imm_\tau=\Imm_w$ for the $321$-avoiding $w$ with the same reduced word as $\tau$. This is a bijection between diagrams and $321$-avoiding permutations \cite{FG}. Their colouring rule \cite[Prop.~4.3]{RS05} expands a product of complementary minors: for $|I|=|I'|$,
\[
\det x[I,I']\,\det x[I^c,I'^c]=\sum_\tau\Imm_\tau(x),
\]
the sum over the diagrams $\tau$ in which every edge joins a black vertex to a white one, where the left vertex $i$ is black iff $i\in I$ and the right vertex $j'$ is black iff $j\notin I'$. We need the case $|I|=1$ (Figure~\ref{fig:colour}).

\begin{proposition}\label{prop:colour}
Let $v,c\in[n]$. Then $x_{vc}\det x[\hat v,\hat c]=\sum_\tau\Imm_\tau$, where $\tau$ runs over
\begin{itemize}
\item the identity diagram, if $c=v$;
\item the diagrams $\tau_{\epsilon,\delta}$ with one cup $\{v,v+\epsilon\}$, one cap $\{c',(c+\delta)'\}$ and order-preserving through-lines, for $\epsilon,\delta\in\{\pm1\}$ with $v+\epsilon,c+\delta\in[n]$.
\end{itemize}
In particular the non-identity diagrams are determined by their cup and cap, and the diagram with cap $\{k',(k+1)'\}$ occurs for $c=k$ and $c=k+1$ and for no other $c$.
\end{proposition}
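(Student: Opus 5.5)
This is the case $I=\{v\}$, $I'=\{c\}$ of the colouring rule, so the plan is to apply that rule and then list the admissible diagrams. The colours are as follows. On the left, only vertex $v$ is black and the other $n-1$ left vertices are white. On the right, vertex $j'$ is black iff $j\ne c$, so only $c'$ is white. Every edge must join a black vertex to a white one. A cup joins two left vertices, so it must contain $v$, since $v$ is the only black left vertex. A cap must contain $c'$ for the same reason. Hence there is at most one cup and at most one cap. Since the number of cups equals the number of caps, the diagram has either no cups and caps, or exactly one of each.

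First suppose there are no cups or caps. Then all edges are through-lines. Non-crossing through-lines are order-preserving, so $\tau$ is the identity diagram, which pairs $i$ with $i'$. Through-lines must join a white left vertex to a black right vertex, or the black vertex $v$ to the white vertex $c'$. The edge at $v$ is $\{v,v'\}$, which is admissible iff $v'$ is white, that is, iff $c=v$. When $c=v$, the remaining edges $\{i,i'\}$ with $i\ne v$ join white to black, as required.

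Now suppose there is one cup $\{v,u\}$ and one cap $\{c',d'\}$. The cup is non-crossing with the rest of the diagram, so the vertices strictly between $v$ and $u$ can only be matched among themselves by nested cups. There is no other cup, so no vertex lies strictly between them and $u=v\pm1$. In the same way $d=c\pm1$. The remaining $n-2$ left vertices are white and the remaining $n-2$ right vertices are black, so they must all be joined by through-lines, and these are forced to be order-preserving by non-crossing. Conversely, every choice of $\epsilon,\delta$ with $v+\epsilon,c+\delta\in[n]$ gives a valid non-crossing diagram $\tau_{\epsilon,\delta}$ with correct colours. Such a diagram is clearly determined by its cup and its cap.

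For the final claim, the cap $\{k',(k+1)'\}$ has to contain $c'$, so it occurs exactly when $c=k$ (with $\delta=+1$) or $c=k+1$ (with $\delta=-1$), and for no other $c$. The only point that needs care is the step showing that a single cup must join adjacent vertices. This follows from non-crossing, because the vertices enclosed by the cup would need further cups, and none are available.
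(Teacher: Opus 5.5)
Your proposal is correct and follows essentially the same route as the paper's proof. You specialise the colouring rule to $I=\{v\}$, $I'=\{c\}$, note that every cup must contain $v$ and every cap must contain $c'$, and use non-crossing to force the cup and cap to be adjacent pairs and the through-lines to be order-preserving; the paper phrases the adjacency step as ``an enclosed vertex's through-line would cross the cup,'' which is the same observation you make.
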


\begin{proof}
Here $I=\{v\}$ and $I'=\{c\}$, so the black vertices are $v$ on the left and every $j'\ne c'$ on the right. Every cup contains the only black left vertex $v$, so there is at most one cup, and there are as many caps as cups. If there is no cup, $\tau$ is the identity diagram, and its edge from $v$ must end at the white vertex $c'$, so $c=v$. If there is one cup, it contains $v$, and the cap contains $c'$, the only white right vertex. A cup or cap that encloses another vertex would cross the through-line from that vertex, so the cup is $\{v,v\pm1\}$ and the cap is $\{c',(c\pm1)'\}$. The remaining $n-2$ left vertices are joined to the remaining $n-2$ right vertices by non-crossing through-lines, which forces the order-preserving matching. Each of these joins a white left vertex to a black right vertex, so all such diagrams are allowed.
\end{proof}

\begin{figure}[ht]
\centering
\begin{tikzpicture}[scale=0.5,
  blk/.style={circle,fill=black,inner sep=1.3pt},
  wht/.style={circle,draw=black,fill=white,inner sep=1.3pt}]
  \begin{scope}[xshift=0.0cm]
    \draw[thick] (0,-1)--(2.2,-1);
    \draw[thick] (0,-2)--(2.2,-2);
    \draw[thick] (0,-3)--(2.2,-3);
    \draw[thick] (0,-4)--(2.2,-4);
    \draw[thick] (0,-5)--(2.2,-5);
    \node[wht] at (0,-1) {};
    \node[blk] at (2.2,-1) {};
    \node[wht] at (0,-2) {};
    \node[blk] at (2.2,-2) {};
    \node[blk] at (0,-3) {};
    \node[wht] at (2.2,-3) {};
    \node[wht] at (0,-4) {};
    \node[blk] at (2.2,-4) {};
    \node[wht] at (0,-5) {};
    \node[blk] at (2.2,-5) {};
    \node[font=\scriptsize] at (1.1,-5.9) {identity};
  \end{scope}
  \begin{scope}[xshift=3.4cm]
    \draw[thick] (0,-2) .. controls (0.9,-2) and (0.9,-3) .. (0,-3);
    \draw[thick] (2.2,-2) .. controls (1.3,-2) and (1.3,-3) .. (2.2,-3);
    \draw[thick] (0,-1)--(2.2,-1);
    \draw[thick] (0,-4)--(2.2,-4);
    \draw[thick] (0,-5)--(2.2,-5);
    \node[wht] at (0,-1) {};
    \node[blk] at (2.2,-1) {};
    \node[wht] at (0,-2) {};
    \node[blk] at (2.2,-2) {};
    \node[blk] at (0,-3) {};
    \node[wht] at (2.2,-3) {};
    \node[wht] at (0,-4) {};
    \node[blk] at (2.2,-4) {};
    \node[wht] at (0,-5) {};
    \node[blk] at (2.2,-5) {};
    \node[font=\scriptsize] at (1.1,-5.9) {$\tau_{-,-}$};
  \end{scope}
  \begin{scope}[xshift=6.8cm]
    \draw[thick] (0,-2) .. controls (0.9,-2) and (0.9,-3) .. (0,-3);
    \draw[thick] (2.2,-3) .. controls (1.3,-3) and (1.3,-4) .. (2.2,-4);
    \draw[thick] (0,-1)--(2.2,-1);
    \draw[thick] (0,-4)--(2.2,-2);
    \draw[thick] (0,-5)--(2.2,-5);
    \node[wht] at (0,-1) {};
    \node[blk] at (2.2,-1) {};
    \node[wht] at (0,-2) {};
    \node[blk] at (2.2,-2) {};
    \node[blk] at (0,-3) {};
    \node[wht] at (2.2,-3) {};
    \node[wht] at (0,-4) {};
    \node[blk] at (2.2,-4) {};
    \node[wht] at (0,-5) {};
    \node[blk] at (2.2,-5) {};
    \node[font=\scriptsize] at (1.1,-5.9) {$\tau_{-,+}$};
  \end{scope}
  \begin{scope}[xshift=10.2cm]
    \draw[thick] (0,-3) .. controls (0.9,-3) and (0.9,-4) .. (0,-4);
    \draw[thick] (2.2,-2) .. controls (1.3,-2) and (1.3,-3) .. (2.2,-3);
    \draw[thick] (0,-1)--(2.2,-1);
    \draw[thick] (0,-2)--(2.2,-4);
    \draw[thick] (0,-5)--(2.2,-5);
    \node[wht] at (0,-1) {};
    \node[blk] at (2.2,-1) {};
    \node[wht] at (0,-2) {};
    \node[blk] at (2.2,-2) {};
    \node[blk] at (0,-3) {};
    \node[wht] at (2.2,-3) {};
    \node[wht] at (0,-4) {};
    \node[blk] at (2.2,-4) {};
    \node[wht] at (0,-5) {};
    \node[blk] at (2.2,-5) {};
    \node[font=\scriptsize] at (1.1,-5.9) {$\tau_{+,-}$};
  \end{scope}
  \begin{scope}[xshift=13.6cm]
    \draw[thick] (0,-3) .. controls (0.9,-3) and (0.9,-4) .. (0,-4);
    \draw[thick] (2.2,-3) .. controls (1.3,-3) and (1.3,-4) .. (2.2,-4);
    \draw[thick] (0,-1)--(2.2,-1);
    \draw[thick] (0,-2)--(2.2,-2);
    \draw[thick] (0,-5)--(2.2,-5);
    \node[wht] at (0,-1) {};
    \node[blk] at (2.2,-1) {};
    \node[wht] at (0,-2) {};
    \node[blk] at (2.2,-2) {};
    \node[blk] at (0,-3) {};
    \node[wht] at (2.2,-3) {};
    \node[wht] at (0,-4) {};
    \node[blk] at (2.2,-4) {};
    \node[wht] at (0,-5) {};
    \node[blk] at (2.2,-5) {};
    \node[font=\scriptsize] at (1.1,-5.9) {$\tau_{+,+}$};
  \end{scope}
  \node[left,font=\tiny] at (-0.15,-1) {$1$};
  \node[right,font=\tiny] at (15.95,-1) {$1'$};
  \node[left,font=\tiny] at (-0.15,-2) {$2$};
  \node[right,font=\tiny] at (15.95,-2) {$2'$};
  \node[left,font=\tiny] at (-0.15,-3) {$3$};
  \node[right,font=\tiny] at (15.95,-3) {$3'$};
  \node[left,font=\tiny] at (-0.15,-4) {$4$};
  \node[right,font=\tiny] at (15.95,-4) {$4'$};
  \node[left,font=\tiny] at (-0.15,-5) {$5$};
  \node[right,font=\tiny] at (15.95,-5) {$5'$};
\end{tikzpicture}
\caption{Proposition~\ref{prop:colour} for $n=5$, $v=c=3$: the five diagrams in $x_{33}\det x[\hat3,\hat3]$. Black vertices are filled: the left vertex $3$ and every right vertex except $3'$. Each edge joins a black vertex to a white one. The identity diagram appears because $c=v$, and the other four have cup $\{3,3\pm1\}$ and cap $\{3',(3\pm1)'\}$, with order-preserving through-lines. They are the Temperley--Lieb immanants of $s_2$, $s_3$, $s_2s_3$ and $s_3s_2$ (Lemma~\ref{lem:a2}).}\label{fig:colour}
\end{figure}

\section{Products of immanants of complementary submatrices}\label{sec:prod}

\begin{theorem}\label{thm:R2}
Let $I,T\subseteq[n]$ with $|I|=|T|=k$. If $F$ is KL-positive on $x[I,T]$ and $G$ is KL-positive on $x[I^c,T^c]$, then $FG$ is KL-positive. In particular:
\begin{enumerate}
\item if $G$ is KL-positive on $x[\hat\imath,\hat\jmath]$, then $x_{ij}G$ is KL-positive;
\item for every bijection $\gamma$ of a set $T\subseteq[n]$, $x_\gamma\det x[T^c,T^c]$ is KL-positive, where $x_\gamma=\prod_{t\in T}x_{t,\gamma(t)}$;
\item if $\phi\in\CF_k$ and $\psi\in\CF_{n-k}$ are KL-positive, then so is $\phi\cdot\psi$.
\end{enumerate}
\end{theorem}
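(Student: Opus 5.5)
The plan is to reduce to products of single Kazhdan--Lusztig immanants and then read off the coefficients from a decomposition of Kazhdan--Lusztig polynomials restricted to a coset. Since KL-positivity is preserved by non-negative combinations, it suffices to take $F=\Imm_u(x[I,T])$ and $G=\Imm_v(x[I^c,T^c])$ with $u\in S_k$ and $v\in S_{n-k}$.

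\emph{Step 1: rewrite the coefficients as a sum over a coset.} Write $FG=d_f$. Then $f$ is supported on $\Sigma=\{\sigma\in S_n:\sigma(I)=T\}$. This set is a coset $xW'$ of the reflection subgroup $W'=S_I\times S_{I^c}\cong S_k\times S_{n-k}$; I take $x$ to be the minimal-length representative, which is the order-preserving bijection on $I$ and on $I^c$. For $\sigma=x\rho$ with $\rho=(\rho_1,\rho_2)\in W'$ we have $f(\sigma)=a(\rho_1)b(\rho_2)$, where $d_a=\Imm_u$ and $d_b=\Imm_v$ on the smaller matrices, using the order-preserving indexings of the statement. Hence, using $\delta_\sigma(\Cp w)=P_{\sigma,w}(1)$,
\[
f(\Cp w)=\sum_{\rho\in W'}P_{x\rho,w}(1)\,a(\rho_1)b(\rho_2).
\]

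\emph{Step 2: hyperbolic localization (Billey--Braden).} Choose a cocharacter of the torus whose weights on the coordinate lines separate $I$ from $I^c$ on the source side and $T$ from $T^c$ on the target side. The component of the $\mathbb C^*$-fixed locus of the flag variety containing the point $x$ is then isomorphic to $GL_k/B\times GL_{n-k}/B$. Its Schubert cells are indexed by $W'$ through $\rho\mapsto x\rho$, compatibly with the Bruhat order of $W'$. By Braden's theorem \cite{Bra03}, the hyperbolic localization of $IC(X_w)$ to this component is a direct sum of shifted intersection cohomology sheaves $IC(X'_y)$, $y\in W'$. Its stalk at $x\rho$ is the stalk of $IC(X_w)$ there, because the attracting slice is transversal and the sheaf is pure. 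Taking Poincar\'e polynomials at $q=1$ gives
\[
P_{x\rho,w}(1)=\sum_{y\in W'}n_y\,P'_{\rho,y}(1)
\]
with integers $n_y\ge0$. Here $P'$ denotes the Kazhdan--Lusztig polynomials of $W'$, which are products of those of $S_k$ and $S_{n-k}$. This is exactly the pattern-map statement of \cite{BB03}.

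\emph{Step 3: conclude.} Substituting into Step 1 gives
\[
f(\Cp w)=\sum_y n_y\,(a\otimes b)\bigl(C'_y(1)\bigr)=\sum_{y=(y_1,y_2)}n_y\,a(C'_{y_1}(1))\,b(C'_{y_2}(1)),
\]
because the KL basis of $S_k\times S_{n-k}$ is the tensor product of the two KL bases. Each factor is non-negative by hypothesis, indeed a Kronecker delta when $F=\Imm_u$ and $G=\Imm_v$. So $f(\Cp w)=n_{(u,v)}\ge0$.

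\emph{Special cases.} Case (1) is $k=1$ with $F=x_{ij}$, which is the unique KL immanant of a $1\times1$ matrix. Case (2) takes $I=T$, $F=x_\gamma$ and $G=\det=\Imm_e$; here $x_\gamma$ is a monomial on $x[T,T]$ and so is KL-positive by the remark in Section~\ref{sec:prelim}. Case (3) follows from \eqref{eq:indprod}, since $d_{\phi\cdot\psi}$ is a sum over $|T|=k$ of products of the required form.

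The main obstacle is Step 2. One must choose the cocharacter so that the fixed component really is $GL_k/B\times GL_{n-k}/B$ with Schubert cells matching $\rho\mapsto x\rho$ in the order-preserving indexing. One must also get the attracting/repelling direction right, so that the stalks of the localized sheaf equal the original stalks $P_{x\rho,w}$ and not their duals. I would first check this against $n=4$ with $I=T=\{1,2\}$, then argue in general via Braden's purity, which gives the positivity of the $n_y$, together with the transversality of the torus-stable slice at fixed points.
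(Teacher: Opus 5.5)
Your proposal is correct and is essentially the paper's proof: you reduce to $F=\Imm_u$, $G=\Imm_v$, write the coefficient of $\Imm_w$ as a sum of $P_{\sigma,w}(1)$ over the coset $\{\sigma:\sigma(I)=T\}$, expand with the Billey--Braden hyperbolic-localization identity, and read off the coefficient as the multiplicity $n_{(u,v)}\ge0$. Your derivation of part (2) directly from the main statement, with the monomial $F=x_\gamma$ on $x[T,T]$, is a harmless shortcut compared with the paper's iteration of part (1).

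One correction to your justification of Step 2. The stalk of the hyperbolic localization at $x\rho$ is in general not the stalk of $IC(X_w)$, since it mixes stalk and costalk directions; only its Euler characteristic agrees, by torus localization. Together with the even shifts, this gives exactly the $q=1$ identity of \cite{BB03} that you and the paper both use, and it is why the argument does not lift to generic $q$.
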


\begin{proof}
By linearity we may take $F=\Imm_u(x[I,T])$ and $G=\Imm_v(x[I^c,T^c])$ with $u\in S_k$, $v\in S_{n-k}$. For $\sigma\in S_n$ with $\sigma(I)=T$ let $\varphi(\sigma)=(\sigma_I,\sigma_{I^c})\in S_k\times S_{n-k}$, where $\sigma_I$ is the restriction $I\to T$ and $\sigma_{I^c}$ the restriction $I^c\to T^c$, both standardised. Then $FG=d_f$ with $f(\sigma)=[\sigma(I)=T]\,f_u(\sigma_I)f_v(\sigma_{I^c})$, where $\Imm_u=d_{f_u}$ and $f_u(\Cp{u'})=\delta_{uu'}$ on $S_k$, and similarly for $v$. By \eqref{eq:duality} the coefficient of $\Imm_w$ in $FG$ is
\[
f(\Cp w)=\sum_{\sigma(I)=T}P_{\sigma,w}(1)\,f_u(\sigma_I)f_v(\sigma_{I^c}).
\]
Let $W'\subseteq S_n$ be the stabiliser of the set of values $T$. It is generated by the reflections whose roots are orthogonal to the indicator vector $\mathbf 1_T$, so it is a parabolic reflection subgroup in the sense of Billey and Braden, and $W'\cong S_k\times S_{n-k}$. Its cosets $W'z$ are the sets $\{\sigma:\sigma(I)=T\}$, $I=z^{-1}(T)$, and the pattern map restricts to the bijection $\varphi$ of $W'z$ onto $W'$. Braden's theorem on hyperbolic localization \cite{Bra03}, applied to the fixed component of a one-parameter subgroup indexed by $W'z$, gives (see the proof of \cite[Thm.~4]{BB03}, equation (2))
\begin{equation}\label{eq:BB}
P_{\sigma,w}(1)=\sum_{y\in W'z}a_y\,P_{\varphi(\sigma),\varphi(y)}(1)\qquad(\sigma\in W'z)
\end{equation}
for integers $a_y=a_y(w)\ge0$, the multiplicities of the simple summands of the hyperbolic localization of the intersection cohomology sheaf of the Schubert variety $X_w$. They depend on $w$ and the coset, but not on $\sigma$, so the same $a_y$ serve for every $\sigma\in W'z$. The shifts in this decomposition are even, so passing to Euler characteristics introduces no signs. The Kazhdan--Lusztig polynomials of $W'$ are products, so $\sum_{\sigma(I)=T}P_{\sigma,w}(1)\,\varphi(\sigma)=\sum_ya_y\,\Cp{\varphi(y)_1}\otimes\Cp{\varphi(y)_2}$ in $\Z[S_k]\otimes\Z[S_{n-k}]$. Applying $f_u\otimes f_v$ shows that the coefficient of $\Imm_w$ is $a_y\ge0$ for the $y$ with $\varphi(y)=(u,v)$.

Part~(1) is the case $I=\{i\}$, $T=\{j\}$, $F=x_{ij}=\Imm_e(x[\{i\},\{j\}])$. Part~(2) follows from~(1) by adding one entry at a time, starting from $\det x[T^c,T^c]=\Imm_e$. Part~(3) follows from \eqref{eq:indprod}: each term $d_\phi(x[T,T])\,d_\psi(x[T^c,T^c])$ is KL-positive by the main statement with $I=T$.
\end{proof}

\begin{exa}\label{exa:R2}
Take $n=3$, $I=\{1\}$, $T=\{2\}$, $F=x_{12}$ and $G=\det x[\hat1,\hat2]$. The coset $\{\sigma:\sigma(1)=2\}$ is $\{213,231\}$, and $\varphi(213)=e$, $\varphi(231)=s_1$ in $S_2$. Write $a=a_{213}(w)$ and $b=a_{231}(w)$. Since $P_{e,e}=P_{e,s_1}=P_{s_1,s_1}=1$ and $P_{s_1,e}=0$, the identity \eqref{eq:BB} reads $P_{213,w}(1)=a+b$ and $P_{231,w}(1)=b$. Every Kazhdan--Lusztig polynomial of $S_3$ is $1$ on its Bruhat interval, so:
\begin{center}
\begin{tabular}{lcccccc}
\toprule
$w$ & $123$ & $132$ & $213$ & $231$ & $312$ & $321$\\
\midrule
$(P_{213,w},P_{231,w})$ & $(0,0)$ & $(0,0)$ & $(1,0)$ & $(1,1)$ & $(1,0)$ & $(1,1)$\\
$(a,b)$ & $(0,0)$ & $(0,0)$ & $(1,0)$ & $(0,1)$ & $(1,0)$ & $(0,1)$\\
\bottomrule
\end{tabular}
\end{center}
The coefficient of $\Imm_w$ in $FG$ is $a_y$ for the $y$ with $\varphi(y)=(e,e)$, that is $a=a_{213}(w)$. So $x_{12}\det x[\hat1,\hat2]=\Imm_{213}+\Imm_{312}$, which is also what Proposition~\ref{prop:colour} gives with $v=1$, $c=2$: the cup $\{1,2\}$ with the caps $\{1',2'\}$ and $\{2',3'\}$.
\end{exa}

For $k=1$, the coset is $\{\sigma:\sigma(i)=j\}$ and $\varphi(\sigma)$ is $\sigma$ with position $i$ and value $j$ deleted, standardised. This is the form used in Section~\ref{sec:hooks}. For products of two complementary minors, Proposition~\ref{prop:colour} and its general form \cite[Prop.~4.3]{RS05} give the expansion explicitly. Theorem~\ref{thm:R2} proves at $q=1$ the positivity of products of Kazhdan--Lusztig immanants of complementary submatrices, which Rhoades and Skandera \cite[(3.4)]{RSbit} state, citing Lusztig, as a property of the dual canonical basis.

\section{Hooks}\label{sec:hooks}

\subsection{Per-vertex functions}
For $\sigma\in S_n$ and $i\in[n]$ let $\ell_i(\sigma)$ be the length of the cycle of $\sigma$ through $i$, and put
\[
f^{(a)}_i(\sigma)=(-1)^{a-1}\sgn(\sigma)\,[\ell_i(\sigma)\ge a].
\]
Stembridge's formula \cite[Prop.~1.1]{Ste92} (see \cite[Lemma~2.1]{DWG}) gives, with $\kappa_{a,k}=1$ for $a>1$ and $\kappa_{1,k}=k+1$,
\begin{equation}\label{eq:stembridge}
\kappa_{a,k}\,\phi^{(a,1^k)}=\sum_{i=1}^n f^{(a)}_i .
\end{equation}
Fix $i$. For $L\ge1$ let $G_L$ be the sum, over $L$-subsets $T\ni i$ and $L$-cycles $\gamma$ on $T$, of $x_\gamma\det x[T^c,T^c]$. Sorting permutations by the cycle through $i$ gives
\begin{equation}\label{eq:cycle}
\det x=G_1-G_2+G_3-\cdots,\qquad d_{f^{(a)}_i}=\sum_{L\ge a}(-1)^{L-a}G_L .
\end{equation}
Each $G_L$ is KL-positive by Theorem~\ref{thm:R2}(2), but $d_{f^{(a)}_i}$ is an alternating sum of them. For instance,
\begin{equation}\label{eq:f2}
d_{f^{(2)}_i}=x_{ii}\det x[\hat\imath,\hat\imath]-\det x .
\end{equation}

\subsection{The case \texorpdfstring{$a=2$}{a=2}}
\begin{lemma}\label{lem:a2}
$f^{(2)}_i(\Cp w)=1$ if $w\in\{s_{i-1},s_i,s_{i-1}s_i,s_is_{i-1}\}$ (those that exist), and $f^{(2)}_i(\Cp w)=0$ otherwise.
\end{lemma}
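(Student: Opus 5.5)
The plan is to start from \eqref{eq:f2}, $d_{f^{(2)}_i}=x_{ii}\det x[\hat\imath,\hat\imath]-\det x$, and to expand both terms in the Kazhdan--Lusztig immanants. By \eqref{eq:duality} the coefficient of $\Imm_w$ in $d_{f^{(2)}_i}$ is $f^{(2)}_i(\Cp w)$, so it suffices to find the expansion of the right-hand side.

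For the second term we use $\Imm_e=\det$, noted in Section~\ref{sec:prelim}. For the first term we apply Proposition~\ref{prop:colour} with $v=c=i$. It gives
\[
x_{ii}\det x[\hat\imath,\hat\imath]=\Imm_{\mathrm{id}}+\sum_{\epsilon,\delta}\Imm_{\tau_{\epsilon,\delta}},
\]
with $\epsilon,\delta\in\{\pm1\}$ subject to $i+\epsilon,i+\delta\in[n]$. The identity diagram corresponds to $e$, so $\Imm_{\mathrm{id}}=\Imm_e=\det$ by \cite[Prop.~3.2]{RS06}. This cancels against $-\det x$. Therefore
\[
d_{f^{(2)}_i}=\sum_{\epsilon,\delta}\Imm_{\tau_{\epsilon,\delta}}.
\]

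The remaining step is to identify each $\tau_{\epsilon,\delta}$ with its $321$-avoiding permutation. We read off a reduced word as a product of Temperley--Lieb generators $e_k$, where $e_k$ has cup $\{k,k+1\}$ and cap $\{k',(k+1)'\}$.

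\begin{itemize}
\item When $\epsilon=\delta$, the diagram $\tau_{\epsilon,\epsilon}$ is a single generator: $e_{i-1}$ for $\epsilon=-1$ and $e_i$ for $\epsilon=+1$. These give $s_{i-1}$ and $s_i$.
\item When $\epsilon\neq\delta$, the diagram is a product of two adjacent generators. The cup is $\{i,i+\epsilon\}$ and the cap is $\{i',(i+\delta)'\}$, with one through-line shifted by two. Hence $\tau_{-,+}=e_{i-1}e_i$ and $\tau_{+,-}=e_ie_{i-1}$, or the reverse depending on the composition convention. In either case the two permutations obtained are $s_{i-1}s_i$ and $s_is_{i-1}$.
\end{itemize}

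The main care point is this identification of $\tau_{-,+}$ and $\tau_{+,-}$. The order of composition does not matter for the final set $\{s_{i-1},s_i,s_{i-1}s_i,s_is_{i-1}\}$, since both orders occur. We do need that these four permutations are distinct and $321$-avoiding, so the four immanants are distinct basis elements; adjacent transpositions and their length-two products clearly are. Boundary cases are handled automatically: for $i=1$ or $i=n$ only the diagrams with $i\pm1\in[n]$ exist, which matches ``those that exist''.

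With this expansion, each of these $w$ has coefficient $1$ and every other $w$ has coefficient $0$, including $w=e$ by the cancellation. The figure for $n=5$, $i=3$ serves as a sanity check.
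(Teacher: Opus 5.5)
Your proposal is correct and is essentially the paper's proof: apply Proposition~\ref{prop:colour} with $v=c=i$ to the first term of \eqref{eq:f2}, cancel the identity diagram against $\Imm_e=\det$, and identify the four remaining diagrams with the Temperley--Lieb generators $t_{i-1},t_i$ and the products $t_{i-1}t_i,t_it_{i-1}$, whose $321$-avoiding permutations are $s_{i-1},s_i,s_{i-1}s_i,s_is_{i-1}$. Your remarks that the composition convention does not matter, and your explicit treatment of distinctness and of the boundary cases $i=1,n$, are details the paper leaves implicit.
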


\begin{proof}
By Proposition~\ref{prop:colour} with $v=c=i$, $x_{ii}\det x[\hat\imath,\hat\imath]$ is the sum of $\Imm_\tau$ over the identity diagram and the diagrams with cup $\{i,i\pm1\}$ and cap $\{i',(i\pm1)'\}$. These four are the Temperley--Lieb generators $t_{i-1}$, $t_i$ and the products $t_{i-1}t_i$, $t_it_{i-1}$, and their $321$-avoiding permutations are $s_{i-1}$, $s_i$, $s_{i-1}s_i$, $s_is_{i-1}$. The identity diagram gives $\Imm_e=\det$, which cancels in \eqref{eq:f2}.
\end{proof}

\begin{corollary}\label{cor:a2}
For every $n$, $\Imm_{(2,1^{n-2})}=2\sum_{k=1}^{n-1}\Imm_{s_k}+\sum_{k=1}^{n-2}\bigl(\Imm_{s_ks_{k+1}}+\Imm_{s_{k+1}s_k}\bigr)$.
\end{corollary}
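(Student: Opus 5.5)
The plan is to sum Lemma~\ref{lem:a2} over $i$ and apply Stembridge's formula \eqref{eq:stembridge}. For $a=2$ and $k=n-2\ge0$ we have $\kappa_{2,n-2}=1$, so $\phi^{(2,1^{n-2})}=\sum_{i=1}^nf^{(2)}_i$. Evaluating at $\Cp w$ gives
\[
c_{(2,1^{n-2}),w}=\sum_{i=1}^n f^{(2)}_i(\Cp w),
\]
and by \eqref{eq:duality} the coefficient of $\Imm_w$ in $\Imm_{(2,1^{n-2})}$ is this sum. By Lemma~\ref{lem:a2}, the $i$-th term is the indicator that $w$ lies in $\{s_{i-1},s_i,s_{i-1}s_i,s_is_{i-1}\}$, keeping only the elements that exist, that is, $s_0$ and $s_n$ are omitted.

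The remaining step is to count, for each $w$, how many $i$ contribute. A simple transposition $s_k$ with $1\le k\le n-1$ occurs for $i=k$ (as $s_i$) and for $i=k+1$ (as $s_{i-1}$). Both values lie in $[n]$, so the coefficient of $\Imm_{s_k}$ is $2$.

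Each length-two element $s_ks_{k+1}$ and $s_{k+1}s_k$ with $1\le k\le n-2$ occurs only for $i=k+1$. Indeed, $s_{i-1}s_i=s_ks_{k+1}$ forces $i=k+1$, and so does $s_is_{i-1}=s_{k+1}s_k$. These elements are distinct from each other and from the simple transpositions, so each has coefficient $1$.

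No other $w$ appears, which gives the stated formula. The boundary cases $i=1$ and $i=n$ are consistent with this count, since the absent generators $s_0$ and $s_n$ are simply dropped. I do not expect a real obstacle here; the only care needed is to check that $\kappa_{2,k}=1$ and that the four listed elements are distinct for each $i$.
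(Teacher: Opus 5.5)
Your proposal is correct and follows the paper's proof exactly: sum Lemma~\ref{lem:a2} over $i$ using \eqref{eq:stembridge} with $\kappa_{2,n-2}=1$. Then note that $s_k$ arises from $i=k$ and $i=k+1$, while $s_ks_{k+1}$ and $s_{k+1}s_k$ arise only from $i=k+1$.
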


\begin{proof}
Sum Lemma~\ref{lem:a2} over $i$, using \eqref{eq:stembridge}: $s_k$ occurs for $i=k$ and $i=k+1$, and $s_ks_{k+1}$, $s_{k+1}s_k$ occur only for $i=k+1$.
\end{proof}

For $n=3$ this reads $\Imm_{(2,1)}=2\Imm_{213}+2\Imm_{132}+\Imm_{231}+\Imm_{312}$. As a check, Lemma~\ref{lem:J}(3) gives $\Imm_w(J_{(2,1)})=\delta_{w,213}$, so the right side is $2$ at $J_{(2,1)}$, in agreement with $\Imm_{(2,1)}(J_{(2,1)})=2!\,1!$ from Lemma~\ref{lem:J}(2).

\subsection{Rotations}
For $a\ge3$ the alternating sum \eqref{eq:cycle} does not collapse in the same way. Instead we prove a stronger statement, in which the per-vertex function is composed with a rotation, by induction on $a$. For $b\ge1$, $m\ge1$ and $v\in[m]$ write $F^{(b)}_v=f^{(b)}_v$ for the per-vertex function on $S_m$. Permutations are maps from positions to values, and $(\rho\pi)(x)=\rho(\pi(x))$.

\begin{definition}\label{def:good}
A permutation $\rho\in S_m$ is a \emph{good rotation for $v$} if $\rho=e$, or $\rho$ rotates a block $[l,h]$ of consecutive values with $v$ at its leading end: either $v=h$, $\rho(l)=h$ and $\rho(x)=x-1$ for $l<x\le h$ (a \emph{down-rotation}), or $v=l$, $\rho(h)=l$ and $\rho(x)=x+1$ for $l\le x<h$ (an \emph{up-rotation}). The other end of the block is the \emph{anchor}; for $\rho=e$ the anchor is $v$. A block of length $1$ gives $\rho=e$, and then either description has anchor $v$.
\end{definition}

\begin{theorem}\label{thm:rotated}
For every $b\ge1$, $m$, $v\in[m]$ and good rotation $\rho$ for $v$, the function $\pi\mapsto F^{(b)}_v(\rho\pi)$ satisfies $F^{(b)}_v(\rho\,\Cp u)\ge0$ for every $u\in S_m$ with $u\ne e$.
\end{theorem}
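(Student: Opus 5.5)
The plan is to induct on $b$, peeling off the vertex $v$ from its cycle and rewriting $d_{F^{(b)}_v\circ\rho}$ as a sum of products $x_{vc}\cdot G_c$. Here each $G_c$ is a rotated per-vertex immanant of the complementary submatrix $x[\hat v,\hat c]$ with parameter $b-1$. Theorem~\ref{thm:R2}(1) then transports KL-positivity of the $G_c$ to the product, and the only delicate part is the coefficient of $\det$ in each $G_c$, which the inductive hypothesis does not control.

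\emph{Base case $b=1$.} Here $F^{(1)}_v=\sgn$, which is multiplicative. So $F^{(1)}_v(\rho\,\Cp u)=\sgn(\rho)\sgn(\Cp u)=\sgn(\rho)\delta_{u,e}$, using $\sgn(\Cp w)=\delta_{w,e}$ from Section~\ref{sec:prelim}. This vanishes for $u\ne e$, which is exactly the claim. It also shows why $u=e$ has to be excluded: $\sgn(\rho)$ can be negative.

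\emph{Inductive step, $b\ge2$.} Write $g(\pi)=F^{(b)}_v(\rho\pi)$. Since $b\ge2$, only permutations $\sigma=\rho\pi$ with $\sigma(v)=c\ne v$ contribute.
\begin{itemize}
\item Splice $v$ out of its cycle: set $\sigma'(\sigma^{-1}(v))=c$ and leave $\sigma'$ equal to $\sigma$ elsewhere. This lowers the cycle length through $c$ by one and flips the sign, so $F^{(b)}_v(\sigma)=F^{(b-1)}_{c}(\sigma')$ on $[m]\setminus\{v\}$.
\item Standardise rows $[m]\setminus\{v\}$ and columns $[m]\setminus\{c\}$ (after undoing $\rho$). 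The identification of the deleted value differs from the order-preserving one by a rotation of the consecutive block of values between $v$ and $c$, composed with the standardisation of $\rho$.
\item The key combinatorial lemma to prove is that this composite is again a good rotation for the image $c^\ast$ of $c$. Because $\rho$ rotates a block with $v$ at its leading end, deleting $v$ and merging with the block between $v$ and $c$ should again rotate consecutive values with $c^\ast$ at the leading end. I would verify this separately for $c$ inside the block of $\rho$, beyond the anchor, and on the other side of $v$, and for down- and up-rotations.
\end{itemize}
This yields
\[
d_g(x)=\sum_{c\ne v}x_{vc}\,G_c,\qquad G_c=d_{F^{(b-1)}_{c^\ast}\circ\rho_c}\bigl(x[\hat v,\hat c]\bigr),
\]
with each $\rho_c$ a good rotation. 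The sign $-1$ from splicing must cancel exactly against the sign convention $(-1)^{b-1}$, and I would check this.

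\emph{Using Theorem~\ref{thm:R2}.} By induction, every $G_c$ is a non-negative combination of the $\Imm_{u'}(x[\hat v,\hat c])$ with $u'\ne e$, plus a term $\varepsilon_c\det x[\hat v,\hat c]$ whose coefficient $\varepsilon_c=F^{(b-1)}_{c^\ast}(\rho_c)$ may be $\pm1$ or $0$. By Theorem~\ref{thm:R2}(1), $x_{vc}$ times the positive part is KL-positive. What remains is
\[
E=\sum_{c\ne v}\varepsilon_c\,x_{vc}\det x[\hat v,\hat c].
\]

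\emph{The main obstacle is $E$.} I would expand each term of $E$ by Proposition~\ref{prop:colour}. For $c\ne v$ there is no identity diagram, and each diagram is determined by its cup $\{v,v\pm1\}$ and cap $\{c',(c\pm1)'\}$. A cap $\{k',(k+1)'\}$ arises only from $c=k$ and $c=k+1$. So the coefficient of a given $\Imm_\tau$ in $E$ is $\varepsilon_k+\varepsilon_{k+1}$ over at most two adjacent columns.

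The remaining task is to compute $\varepsilon_c$ explicitly. The value $\rho_c$ is a rotation, hence a single cycle, or the identity, so $\varepsilon_c$ is determined by whether that cycle through $c^\ast$ has length $\ge b-1$. The claim to prove is that whenever $\varepsilon_k=-1$, the neighbour satisfies $\varepsilon_{k\pm1}=+1$ on the relevant side, so that every adjacent sum is $\ge0$.

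Any negative $\Imm_\tau$ left over after this pairing would have to be absorbed by a positive $\Imm_\tau$ from the positive part. I would first test the pairing on small $m$ and $b=3$, where the sign pattern of $\varepsilon_c$ along $c$ should be a contiguous block determined by the anchor of $\rho$. If it fails, the fallback is to strengthen the inductive statement to also track the $\Imm$'s with one cup and one cap.
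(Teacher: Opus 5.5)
Your outline follows the paper's proof: induction on $b$, then splicing $v$ out of its cycle to get a rotated per-vertex function on $x[\hat v,\hat c]$ (Lemma~\ref{lem:closure}). Theorem~\ref{thm:R2}(1) handles everything except the $\Imm_e$-coefficients of the pieces; using it this way is equivalent to the paper's direct use of \eqref{eq:BB}. Proposition~\ref{prop:colour} with adjacent-column pairing handles the remainder $E$ (Lemma~\ref{lem:bottom}).

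As written, however, you conflate $\sigma(v)$ with the column $\pi(v)=\rho^{-1}(\sigma(v))$, and this breaks the pairing. The factor $x_{vc}$, the minor $x[\hat v,\hat c]$ and the caps $\{k',(k+1)'\}$ all refer to the column, so $c$ must be $\pi(v)$. With that indexing:
\begin{itemize}
\item the piece that vanishes is the one at the anchor $l$, where $\rho(l)=v$;
\item the new distinguished vertex is the standardisation of $\rho(c)$;
\item for $\rho\ne e$ the column $c=v$ is present, since it is not the anchor. Your sum over $c\ne v$ is correct only when $\rho=e$.
\end{itemize}
The identity diagram of the column $c=v$ is harmless because $u\ne e$. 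Its four cup/cap diagrams, however, are needed. For $m=6$, $b=2$ and $\rho$ the down-rotation of $[2,5]$, one gets $(\varepsilon_1,\dots,\varepsilon_6)=(1,0,1,-1,1,-1)$, and every adjacent sum is $\ge0$. Yet dropping $c=5=v$ leaves coefficient $-1$ at the caps $\{4',5'\}$ and $\{5',6'\}$.

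The other missing ingredient resolves your main obstacle: the quantitative form of the closure lemma. The rotation $\rho_c$ rotates a block of length exactly $|c-l|$, so it is a single $|c-l|$-cycle through the distinguished vertex. Hence
\[
\varepsilon_c=(-1)^{b+|c-l|+1}\,[\,|c-l|\ge b-1\,].
\]
So $\varepsilon$ vanishes on the interval $Z=[l-b+2,\,l+b-2]$ around the anchor, alternates in sign outside $Z$, and equals $+1$ at $c=l\pm(b-1)$. A column with $\varepsilon_c=-1$ has $|c-l|\ge b$, so both its neighbours in $[m]$ have $\varepsilon=+1$.

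Every adjacent sum $\varepsilon_k+\varepsilon_{k+1}$ is therefore $0$ or $1$, and distinct (cup, cap) pairs give distinct immanants. You need neither absorption into the positive part nor a strengthened induction. With the indexing fixed and this sign computation carried out, your plan becomes a complete proof.
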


\begin{proof}[Proof of Theorem~\ref{thm:main}, given Theorem~\ref{thm:rotated}]
Take $\rho=e$: $f^{(a)}_i(\Cp u)\ge0$ for $u\ne e$. At $u=e$, $f^{(a)}_i(\Cp e)=f^{(a)}_i(e)=0$ for $a\ge2$, while $f^{(1)}_i=\sgn$ is KL-positive. Sum over $i$ with \eqref{eq:stembridge}.
\end{proof}

The proof of Theorem~\ref{thm:rotated} is an induction on $b$. Fix $b,m,v,\rho$ and put $h(\pi)=F^{(b)}_v(\rho\pi)$. For $c\in[m]$, the permutations with $\pi(v)=c$ form a coset of the stabiliser of the value $c$. As in Section~\ref{sec:prod}, let $\pi'\in S_{m-1}$ be $\pi$ with position $v$ and value $c$ deleted, standardised, and define $H_c(\pi')=h(\pi)$.

\begin{lemma}[closure]\label{lem:closure}
Let $t=\rho(c)$. If $t=v$, that is, if $c$ is the anchor, then $H_c=0$ (for $b\ge2$). Otherwise $H_c(\pi')=F^{(b-1)}_{v'}(\rho'\pi')$, where $v'$ is the position of $t$ in $[m]\setminus\{v\}$ and $\rho'$ is a good rotation for $v'$ whose block has length $|c-\text{anchor}|$.
\end{lemma}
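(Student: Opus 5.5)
The plan is to track the cycle of $\sigma=\rho\pi$ through $v$ directly, and then to check that deleting $v$ from that cycle corresponds, after standardisation, to precomposing $\pi'$ with a smaller rotation. Throughout, $\pi(v)=c$, so $\sigma(v)=\rho(c)=t$.

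First the anchor case. If $t=v$ then $v$ is a fixed point of $\sigma$, so $\ell_v(\sigma)=1<b$ for $b\ge2$, and $h(\pi)=0$. Moreover $\rho^{-1}(v)$ is the anchor: for a down-rotation $\rho(l)=h=v$, for an up-rotation $\rho(h)=l=v$, and for $\rho=e$ the anchor is $v$ itself. So $t=v$ holds exactly when $c$ is the anchor.

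Next the cycle-deletion step, assuming $t\ne v$. Define $\sigma'$ on $[m]\setminus\{v\}$ by $\sigma'(x)=\sigma(x)$ if $\sigma(x)\ne v$, and $\sigma'(x)=t$ for the unique $x$ with $\sigma(x)=v$. This removes $v$ from its cycle and leaves the other cycles unchanged. Hence:
\begin{itemize}
\item the cycle of $\sigma'$ through $t$ has length $\ell_v(\sigma)-1$;
\item $\sgn\sigma'=-\sgn\sigma$, because one cycle has lost one element;
\item $(-1)^{b-1}$ becomes $(-1)^{b-2}$.
\end{itemize}
The two sign changes cancel, so $F^{(b)}_v(\sigma)=F^{(b-1)}_t(\sigma')$, where $\sigma'$ is read as a permutation of $[m]\setminus\{v\}$. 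Standardising positions and values by the same order-preserving bijection $[m]\setminus\{v\}\to[m-1]$ turns the marked vertex $t$ into $v'$, the position of $t$ in $[m]\setminus\{v\}$.

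Now express $\sigma'$ through $\pi'$. The position $x$ with $\sigma(x)=v$ is the one with $\pi(x)=\rho^{-1}(v)=$ anchor. Therefore $\sigma'=\tilde\rho\circ\pi|_{[m]\setminus\{v\}}$, where $\tilde\rho\colon[m]\setminus\{c\}\to[m]\setminus\{v\}$ agrees with $\rho$ away from the anchor and sends the anchor to $t=\rho(c)$. Since $\pi'$ standardises positions in $[m]\setminus\{v\}$ and values in $[m]\setminus\{c\}$, we get $\operatorname{std}(\sigma')=\rho'\pi'$, where $\rho'\in S_{m-1}$ is $\tilde\rho$ standardised on both sides. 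This gives $H_c(\pi')=F^{(b-1)}_{v'}(\rho'\pi')$.

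The remaining step, which I expect to be the main obstacle, is the bookkeeping showing that $\rho'$ is a good rotation for $v'$ whose block has length $|c-\text{anchor}|$. I would do a case check for a down-rotation on $[l,h]$, with $v=h$ and anchor $l$; up-rotations and $\rho=e$ follow by the symmetry reversing the order of $[m]$.
\begin{itemize}
\item If $c\in(l,h]$, then $t=c-1$. Here $\tilde\rho$ sends $l\mapsto c-1$, sends $y\mapsto y-1$ on $(l,h]\setminus\{c\}$, and is the identity elsewhere. After deleting $c$ from the domain and $h$ from the range, $\tilde\rho$ fixes $(c,h]$ in standardised coordinates and rotates the block $[l,c-1]$ down with leading end at $t$. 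This block has length $c-l$.
\item If $c$ lies outside $[l,h]$, then $t=c$, and $\tilde\rho$ sends $l\mapsto c$. After standardising, the block $[l,h]$ is absorbed and $\tilde\rho$ becomes a rotation of the consecutive values between $c$ and $l$. If $c<l$ it is an up-rotation with leading end at $t$; if $c>h$ it is a down-rotation with leading end at $t$. In both subcases the block has length $|c-l|$ and leading end $v'$.
\end{itemize}
For $\rho=e$ the same computation gives the rotation between $c$ and $v$. I would verify these identifications on small examples before writing the indices out in general.
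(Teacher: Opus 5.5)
Your proposal is correct and follows essentially the same route as the paper: your $\sigma'$ is the paper's $\tau$ and your $\tilde\rho$ is its $\psi\circ\rho$, and like the paper you cancel the two sign changes, conjugate by the standardisation to get $\rho'\pi'$, and use the order-reversing symmetry to reduce to a down-rotation. Your cases $c\in(l,h]$, $c>h$ and $c<l$ give exactly the paper's down-rotation of $[l,c-1]$ (in the first two cases) and up-rotation of $[c,l-1]$; note that $\rho=e$ is simply the length-one case $l=h=v$ of this analysis, not something obtained by the symmetry.
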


\begin{proof}
Put $\sigma=\rho\pi$, so $\sigma(v)=t$. If $t=v$, the cycle of $\sigma$ through $v$ has length $1<b$. Otherwise remove $v$ from its cycle: let $\tau$ be the permutation of $[m]\setminus\{v\}$ with $\tau(x)=\sigma(x)$, except $\tau(\sigma^{-1}(v))=t$. The cycle through $t$ becomes one shorter, the other cycles are unchanged, and the number of cycles is the same, so $\sgn\sigma=-\sgn\tau$ and $F^{(b)}_v(\sigma)=F^{(b-1)}_t(\tau)$. For $x\ne v$, $\tau(x)=\psi(\rho(\pi(x)))$ with $\psi(v)=t$ and $\psi$ the identity elsewhere. Here $\psi\colon[m]\setminus\{t\}\to[m]\setminus\{v\}$ is a bijection because $t\ne v$. Let $\mathrm{std}_v\colon[m]\setminus\{v\}\to[m-1]$ and $\mathrm{std}_c\colon[m]\setminus\{c\}\to[m-1]$ be the order-preserving bijections, so that $\pi'=\mathrm{std}_c\circ\pi\circ\mathrm{std}_v^{-1}$. Conjugating $\tau$ by $\mathrm{std}_v$ gives $\mathrm{std}_v\circ\tau\circ\mathrm{std}_v^{-1}=\rho'\pi'$ with $\rho'=\mathrm{std}_v\circ\psi\circ\rho\circ\mathrm{std}_c^{-1}\in S_{m-1}$, and conjugation preserves cycle lengths and signs, so $F^{(b-1)}_t(\tau)=F^{(b-1)}_{v'}(\rho'\pi')$ with $v'=\mathrm{std}_v(t)$. By the symmetry $w\mapsto w_0ww_0$, which preserves Kazhdan--Lusztig polynomials, we may assume that $\rho=e$ or that $\rho$ is the down-rotation of $[l,v]$. A direct check gives three cases: if $c>v$, $\rho'$ is the down-rotation of $[l,c-1]$ and $v'=c-1$; if $l<c\le v$, $\rho'$ is the down-rotation of $[l,c-1]$ and $v'=c-1$; if $c<l$, $\rho'$ is the up-rotation of $[c,l-1]$ and $v'=c$. In each case $\rho'$ is good for $v'$ and its block has length $|c-l|$.
\end{proof}

\begin{exa}\label{exa:closure}
Let $m=6$ and let $\rho=125346$ be the down-rotation of $[3,5]$, so $v=5$ and the anchor is $l=3$. The three cases of the proof, and the anchor, give:
\begin{center}
\begin{tabular}{cccccc}
\toprule
$c$ & $t=\rho(c)$ & case & $\rho'\in S_5$ & $v'$ & $|c-l|$\\
\midrule
$1$ & $1$ & $c<l$ & $21345$, up-rotation of $[1,2]$ & $1$ & $2$\\
$2$ & $2$ & $c<l$ & $e$, up-rotation of $[2,2]$ & $2$ & $1$\\
$3$ & $5$ & anchor & $H_3=0$ & & \\
$4$ & $3$ & $l<c\le v$ & $e$, down-rotation of $[3,3]$ & $3$ & $1$\\
$5$ & $4$ & $l<c\le v$ & $12435$, down-rotation of $[3,4]$ & $4$ & $2$\\
$6$ & $6$ & $c>v$ & $12534$, down-rotation of $[3,5]$ & $5$ & $3$\\
\bottomrule
\end{tabular}
\end{center}
For instance, at $c=6$ the value $t=6$ is fixed by $\rho$, and removing $v=5$ from the cycle of $\sigma=\rho\pi$ through it gives a permutation $\tau$ of $\{1,2,3,4,6\}$; after standardising, $\tau=\rho'\pi'$ with $\rho'$ again the down-rotation of $[3,5]$, now in $S_5$, and $v'=5$ is the position of $t=6$. In each row $H_c=F^{(b-1)}_{v'}\circ\rho'$ has been checked on all of $S_5$ for $b=2,3,4,5$.
\end{exa}

\begin{lemma}[bottom terms]\label{lem:bottom}
For $u\ne e$, $\sum_cH_c(e)\,D_{vc}(u)\ge0$, where $D_{vc}(u)$ is the coefficient of $\Imm_u$ in $x_{vc}\det x[\hat v,\hat c]$.
\end{lemma}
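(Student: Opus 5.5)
The plan is to compute both factors in the sum explicitly: $H_c(e)$ from Lemma~\ref{lem:closure} and $D_{vc}(u)$ from Proposition~\ref{prop:colour}. Then I will regroup the sum by the cap of the Temperley--Lieb diagram, so that it becomes a sum of terms $H_k(e)+H_{k+1}(e)$, and show that each of these is non-negative.

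\emph{The case $b=1$.} Here $h(\pi)=\sgn(\rho)\sgn(\pi)$. Since $\sgn(\Cp u)=\delta_{u,e}$, all coefficients at $u\ne e$ vanish, so there is nothing to prove. From now on take $b\ge2$ and let $l$ denote the anchor of $\rho$.

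\emph{Step 1: the values $H_c(e)$.} By Lemma~\ref{lem:closure}, $H_l(e)=0$. For $c\ne l$ we have $H_c(e)=F^{(b-1)}_{v'}(\rho')$, where $\rho'$ is a good rotation for $v'$ whose block has length $L=|c-l|\ge1$. A rotation of a block of length $L$ is a single $L$-cycle, and it contains the leading end $v'$. So $\sgn\rho'=(-1)^{L-1}$, the cycle through $v'$ has length exactly $L$, and
\[
H_c(e)=(-1)^{b-2}(-1)^{L-1}[L\ge b-1]=(-1)^{b+L-1}[\,|c-l|\ge b-1\,].
\]
For $b=2$ this reads $(-1)^{L-1}=\sgn\rho'$, which agrees with $F^{(1)}=\sgn$.

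\emph{Step 2: the coefficients $D_{vc}(u)$.} Fix $u\ne e$. By Proposition~\ref{prop:colour} and the bijection between Temperley--Lieb diagrams and $321$-avoiding permutations, $D_{vc}(u)\in\{0,1\}$. It is nonzero only if $u$ is the permutation of some diagram $\tau_{\epsilon,\delta}$, with cup $\{v,v+\epsilon\}$ and cap $\{k',(k+1)'\}$. Distinct diagrams give distinct $u$, and the non-identity diagrams are determined by their cup and cap. Moreover, such a diagram occurs in $x_{vc}\det x[\hat v,\hat c]$ exactly for $c=k$ and $c=k+1$. Hence the sum in the lemma is either $0$ or equal to $H_k(e)+H_{k+1}(e)$ for a single $k$.

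\emph{Step 3: the sign check.} This is the only real content of the proof, and it is short.
\begin{itemize}
\item If neither $k$ nor $k+1$ equals $l$, both lie on the same side of $l$. Their distances to $l$ are then consecutive integers $L$ and $L+1$, so the two terms have opposite signs whenever both are nonzero.
\begin{itemize}
\item If $L\ge b-1$, both terms are nonzero and the sum is $0$.
\item If $L=b-2$, only the term at distance $b-1$ survives. Its value is $(-1)^{2b-2}=1$.
\item If $L<b-2$, the sum is $0$.
\end{itemize}
\item If one of $k,k+1$ equals $l$, the sum is the value at distance $1$, namely $(-1)^b[1\ge b-1]$. This equals $1$ for $b=2$ and $0$ for $b\ge3$.
\end{itemize}
In every case the sum is $0$ or $1$, and in particular it is non-negative.

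The main point needing care is Step 1: confirming from Lemma~\ref{lem:closure} that $\rho'$ is a full cycle on its block passing through $v'$, including the degenerate block of length $1$ where $\rho'=e$. With that in hand, the alternating-sign cancellation in Step 3 is immediate.
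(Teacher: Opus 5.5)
Your proposal is correct and follows essentially the same route as the paper. You compute $H_c(e)=(-1)^{b+|c-l|+1}[\,|c-l|\ge b-1\,]$ from Lemma~\ref{lem:closure}, regroup by the cap via Proposition~\ref{prop:colour} so that each coefficient is $H_k(e)+H_{k+1}(e)$, and check that this is $0$ or $1$. The paper phrases your distance-to-$l$ case split as cancellation outside and inside the interval $Z=[l-b+2,\,l+b-2]$, with coefficient $+1$ at its two boundary pairs.

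Your $b=1$ paragraph is unnecessary, since the lemma is only used for $b\ge2$. It also argues $h(\Cp u)=0$ rather than anything about the bottom-term sum. That sum is nonetheless $0$ at $u\ne e$, because $\sum_c H_c(e)\,x_{vc}\det x[\hat v,\hat c]=\sgn(\rho)\det x$ by Laplace expansion.
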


\begin{proof}
Let $l$ be the anchor and let $c\in[m]$. If $c=l$, then $H_c=0$ by Lemma~\ref{lem:closure}. Otherwise $\rho'$ rotates a block of length $L=|c-l|\ge1$, so it is a single $L$-cycle through $v'$ (a fixed point if $L=1$) and $\sgn\rho'=(-1)^{L-1}$. Hence, in both cases,
\[
H_c(e)=F^{(b-1)}_{v'}(\rho')=(-1)^{b+|c-l|+1}\,[\,|c-l|\ge b-1\,].
\]
So $H_c(e)=0$ exactly for $c$ in $Z\cap[m]$, where $Z=[l-b+2,\,l+b-2]$ is an interval containing $l$ (as $b\ge2$), and
\[
H_c(e)=\kappa\,(-1)^{v+c}\quad(c\in[m]\setminus Z),\qquad \kappa=(-1)^{b+1+v+l}.
\]
By Proposition~\ref{prop:colour}, the coefficient of the diagram with cup $\{v,v\pm1\}$ and cap $\{k',(k+1)'\}$ in $\sum_cH_c(e)\,x_{vc}\det x[\hat v,\hat c]$ is $H_k(e)+H_{k+1}(e)$ (Figure~\ref{fig:bottom}). If $k$ and $k+1$ both lie outside $Z$, this is $\kappa(-1)^{v+k}+\kappa(-1)^{v+k+1}=0$, and if both lie in $Z$ it is $0$. Since $Z$ is an interval, the remaining pairs straddle its boundary. For $k=l-b+1$, with $k+1\in Z$, the coefficient is $H_k(e)=\kappa(-1)^{v+l-b+1}=+1$. For $k+1=l+b-1$, with $k\in Z$, it is $H_{k+1}(e)=\kappa(-1)^{v+l+b-1}=+1$. A boundary pair that does not lie inside $[m]$ gives no diagram. So
\[
\sum_cH_c(e)\,x_{vc}\det x[\hat v,\hat c]\;=\;H_v(e)\,\Imm_e\;+\sum_{\substack{k\in\{l-b+1,\;l+b-2\}\\1\le k<m}}\ \sum_{\pm}\Imm_{\tau_{\pm,k}},
\]
where $\tau_{\pm,k}$ has cup $\{v,v\pm1\}$ and cap $\{k',(k+1)'\}$, and the inner sum is over the cups that exist. The two boundary values $k=l-b+1$ and $k=l+b-2$ differ, since $2b\ne3$, so the diagrams in the double sum have distinct caps or distinct cups and are pairwise distinct by Proposition~\ref{prop:colour}. Distinct diagrams correspond to distinct $321$-avoiding permutations \cite{FG}, hence to distinct Kazhdan--Lusztig immanants, so for $u\ne e$ the sum $\sum_cH_c(e)D_{vc}(u)$ is $0$ or $1$.
\end{proof}

\begin{figure}[ht]
\centering
\begin{tikzpicture}[scale=0.62,
  blk/.style={circle,fill=black,inner sep=1.6pt},
  wht/.style={circle,draw=black,fill=white,inner sep=1.6pt}]
\foreach \pan/\c/\lab in {0/4/{c=k=4},7.5/5/{c=k+1=5}}{
  \begin{scope}[xshift=\pan cm]
    \draw[thick] (0,-2) .. controls (1.1,-2) and (1.1,-3) .. (0,-3);
    \draw[thick] (3,-4) .. controls (1.9,-4) and (1.9,-5) .. (3,-5);
    \draw[thick] (0,-1)--(3,-1); \draw[thick] (0,-4)--(3,-2); \draw[thick] (0,-5)--(3,-3); \draw[thick] (0,-6)--(3,-6);
    \foreach \i in {1,...,6}{
      \ifnum\i=2 \node[blk] at (0,-\i) {}; \else \node[wht] at (0,-\i) {}; \fi
      \ifnum\i=\c \node[wht] at (3,-\i) {}; \else \node[blk] at (3,-\i) {}; \fi
      \node[left,font=\scriptsize] at (-0.15,-\i) {$\i$};
      \node[right,font=\scriptsize] at (3.15,-\i) {$\i'$};
    }
    \node[font=\small] at (1.5,-7) {$x_{2c}\det x[\hat2,\hat c]$, $\lab$};
  \end{scope}}
\begin{scope}[yshift=-8.6cm,xshift=1.8cm]
  \node[font=\small,left] at (0.5,0) {$c$};
  \node[font=\small,left] at (0.5,-0.8) {$H_c(e)$};
  \node[font=\small,left] at (0.5,-1.6) {$H_k(e)+H_{k+1}(e)$};
  \foreach \c/\h in {1/0,2/0,3/0,4/+1,5/-1,6/+1}{
    \node[font=\small] at (1.2*\c,0) {$\c$};
    \node[font=\small] at (1.2*\c,-0.8) {$\h$};}
  \foreach \k/\t in {1/0,2/0,3/+1,4/0,5/0}{
    \node[font=\small] at (1.2*\k+0.6,-1.6) {$\t$};}
  \draw[thick] (0.9,0.35) -- (0.9,0.45) -- (3.9,0.45) -- (3.9,0.35);
  \node[font=\scriptsize,above] at (2.4,0.45) {$Z=[1,3]$};
\end{scope}
\end{tikzpicture}
\caption{Lemma~\ref{lem:bottom} for $m=6$, $v=2$, $k=4$. The diagram with cup $\{2,3\}$ and cap $\{4',5'\}$ is compatible with the colouring of $x_{24}\det x[\hat2,\hat4]$ (left) and with that of $x_{25}\det x[\hat2,\hat5]$ (right), and with no other summand; black vertices are filled. Its two coefficients $H_4(e)$ and $H_5(e)$ have opposite signs unless one of $4,5$ lies in $Z$. Below, the values $H_c(e)$ for $\rho=e$ and $b=3$, where $l=v=2$ and $Z=[1,3]$. The last row, placed between $k$ and $k+1$, is the coefficient of the diagram with cap $\{k',(k+1)'\}$: the pairs $(4,5)$ and $(5,6)$ cancel, the only surviving pair is the boundary pair $(3,4)$, with coefficient $H_4(e)=+1$, and the other boundary $k=l-b+1=0$ lies outside $[m]$.}\label{fig:bottom}
\end{figure}

\begin{proof}[Proof of Theorem~\ref{thm:rotated}]
Induction on $b$. For $b=1$, $F^{(1)}_v(\rho\pi)=\sgn(\rho)\sgn(\pi)$, and $\sgn(\Cp u)=\delta_{u,e}$. Let $b\ge2$. Apply \eqref{eq:BB} with $k=1$ to the coset $\{\pi(v)=c\}$: for each $c$ there are integers $a^{(c)}_y(u)\ge0$, indexed by the coset, with
\[
h(\Cp u)=\sum_c\sum_{y}a^{(c)}_y(u)\,H_c\bigl(\Cp{\varphi(y)}\bigr).
\]
The terms with $\varphi(y)\ne e$ are $\ge0$ by Lemma~\ref{lem:closure} and induction. Each coset contains exactly one $y$ with $\varphi(y)=e$, since $\varphi$ is a bijection of the coset onto $S_{m-1}$, and its term is $a^{(c)}_y(u)H_c(e)$. The same multiplicity occurs in Theorem~\ref{thm:R2} with $I=\{v\}$, $T=\{c\}$, $F=x_{vc}$ and $G=\det x[\hat v,\hat c]=\Imm_e(x[\hat v,\hat c])$: there $x_{vc}\det x[\hat v,\hat c]=d_g$ with $g(\sigma)=[\sigma(v)=c]\,\sgn(\varphi(\sigma))$, and since $\sgn(\Cp{u'})=\delta_{u',e}$, the proof of Theorem~\ref{thm:R2} shows that the coefficient of $\Imm_u$ is $a^{(c)}_y(u)$ for this $y$. So $a^{(c)}_y(u)=D_{vc}(u)$, and these terms sum to $\sum_cH_c(e)D_{vc}(u)\ge0$ by Lemma~\ref{lem:bottom}.
\end{proof}

For $\rho=e$ the bottom terms are $(-1)^{a-1}$ times a tail $\sum_{|j-i|\ge a-1}(-1)^{i+j}x_{ij}\det x[\hat\imath,\hat\jmath]$ of the Laplace expansion of $\det x$ along row $i$, and Lemma~\ref{lem:bottom} identifies them with at most four staircase Kazhdan--Lusztig immanants. For $a=2$ this is Lemma~\ref{lem:a2}.

\begin{exa}\label{ex:n4}
Let $n=4$, $a=3$, $i=1$. The non-zero Kazhdan--Lusztig coefficients of $f^{(3)}_1$ are $1$ at $2314$, $3124$, $3241$, $4213$ and $2$ at $3214$, $4231$. In the proof, $\rho=e$, $v=l=1$ and $b=3$, so $H_c(e)=0$ for $c\in\{1,2\}$, $H_3(e)=+1$ and $H_4(e)=-1$. Only the boundary cap $\{2',3'\}$ survives, with the cup $\{1,2\}$, and this diagram is the staircase $3124$. So the bottom terms contribute exactly $\Imm_{3124}$. The rest of $f^{(3)}_1$, namely $1$ at $2314$, $3241$, $4213$ and $2$ at $3214$, $4231$, comes from the pieces $H_c=F^{(2)}_{v'}\circ\rho'$, which are non-negative off $e$ by the case $b=2$. In detail, $H_1=0$, and for $c=2,3,4$ the closure lemma gives $(\rho',v')=(e,1)$, $(213,2)$ and $(312,3)$. Writing $D^{(c)}_{u'}(u)$ for the coefficient of $\Imm_u$ in $x_{1c}\Imm_{u'}(x[\hat1,\hat c])$, the proof of Theorem~\ref{thm:rotated} splits $h(\Cp u)=\sum_c\sum_{u'}D^{(c)}_{u'}(u)\,H_c(\Cp{u'})$ as follows:
\begin{center}
\begin{tabular}{cll}
\toprule
$c$ & $u'$ with $H_c(\Cp{u'})\ne0$ & $x_{1c}\Imm_{u'}(x[\hat1,\hat c])$\\
\midrule
$2$ & $213$: $1$ & $\Imm_{2314}+\Imm_{3214}$\\
$3$ & $e$: $1$ & $\Imm_{3124}+\Imm_{4123}$\\
 & $213$: $1$ & $\Imm_{3214}+\Imm_{4213}$\\
 & $231$: $1$ & $\Imm_{3241}+\Imm_{4231}$\\
$4$ & $e$: $-1$ & $\Imm_{4123}$\\
 & $231$: $1$ & $\Imm_{4231}$\\
\bottomrule
\end{tabular}
\end{center}
So the contributions of $c=2,3,4$ are
\begin{gather*}
\Imm_{2314}+\Imm_{3214},\qquad \Imm_{3124}+\Imm_{4123}+\Imm_{3214}+\Imm_{4213}+\Imm_{3241}+\Imm_{4231},\\
-\Imm_{4123}+\Imm_{4231}.
\end{gather*}
The two bottom terms at $\Imm_{4123}$ cancel, as in Lemma~\ref{lem:bottom}, and the three contributions add up to the expansion of $f^{(3)}_1$ above.
\end{exa}

The induction works at $q=1$ only, because \eqref{eq:BB} is a statement about Euler characteristics. We do not know whether $\phi^{(a,1^{n-a})}_q(q^{\ell(w)/2}C'_w(q))\in\N[q]$ for every $n$ (Problem~\ref{prob:q}); for $n\le10$ it holds by \cite{McN}.

\section{Consequences}\label{sec:consequences}

\begin{proof}[Proof of Corollary~\ref{cor:semiring}]
The KL-positive class functions are closed under sums, and under induction products by Theorem~\ref{thm:R2}(3). Every irreducible character is KL-positive by Haiman \cite[Lemma~1.1]{Hai}, and every hook monomial character by Theorem~\ref{thm:main}. Since $\ch$ is additive and multiplicative, every class function with characteristic in $\mathcal R$ is KL-positive. The immanant is then non-negative on $\TNN_n$ by \eqref{eq:duality} and the positivity of the $\Imm_w$ \cite{RS06}.
\end{proof}

Corollary~\ref{cor:semiring} contains the $h_\mu=s_{(\mu_1)}s_{(\mu_2)}\cdots$, so every $\eta^\mu$ is KL-positive, as is directly clear from $d_{\eta^\mu}(x)=\sum\prod_t\per x[T_t,T_t]$. A product such as $h_jm_{(a,1^b)}$ involves monomial symmetric functions $m_\lambda$ with $\lambda$ not a hook, so Corollary~\ref{cor:semiring} reaches class functions that are not combinations of hook monomial characters, and it does not follow from Haiman's theorem alone.

Write $f^a=\chi^a(e)$ and $N_a=\Imm_{\chi^a}/f^a$ for the normalised immanants.

\begin{corollary}\label{cor:skandera}
For every $n$ and $2\le k\le n$ put $D_k=(k-1)\chi^{(k,1^{n-k})}-(n-k+1)\chi^{(k-1,1^{n-k+1})}$, so that $N_{(k,1^{n-k})}-N_{(k-1,1^{n-k+1})}=\frac{(k-2)!\,(n-k)!}{(n-1)!}\,d_{D_k}$. Then
\[
\ch D_k=\sum_{j=0}^{k-2}h_j\,m_{(k-j,1^{n-k})},\qquad
d_{D_k}(x)=\sum_{j=0}^{k-2}\ \sum_{|T|=j}\per x[T,T]\;\Imm_{(k-j,1^{n-k})}\bigl(x[T^c,T^c]\bigr),
\]
and $D_k$ is KL-positive. In particular $\per\ge N_{(n-1,1)}\ge\cdots\ge\det$ on $\TNN_n$ for every $n$, with every difference a non-negative combination of Kazhdan--Lusztig immanants.
\end{corollary}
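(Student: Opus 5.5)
The plan is to prove the symmetric-function identity first, then read off the immanant formula and KL-positivity from results already established.

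\emph{Step 1: the normalisation.} The hook dimension is $f^{(k,1^{n-k})}=\binom{n-1}{k-1}$. Hence
\[
\frac{1}{f^{(k,1^{n-k})}}=\frac{(k-1)!\,(n-k)!}{(n-1)!},\qquad
\frac{1}{f^{(k-1,1^{n-k+1})}}=\frac{(k-2)!\,(n-k+1)!}{(n-1)!}.
\]
Factoring out $\frac{(k-2)!(n-k)!}{(n-1)!}$ leaves $(k-1)\chi^{(k,1^{n-k})}-(n-k+1)\chi^{(k-1,1^{n-k+1})}=D_k$, which gives the stated relation between $N_{(k,1^{n-k})}-N_{(k-1,1^{n-k+1})}$ and $d_{D_k}$.

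\emph{Step 2: the characteristic.} Here I would expand each hook Schur function in monomials. A hook Kostka number $K_{(k,1^{n-k}),\mu}$ equals $\binom{\ell(\mu)-1}{n-k}$. The reason is that an SSYT of hook shape is determined by which of the entries $2,\dots,\ell(\mu)$ appear in the leg; the leg is strictly increasing and must contain distinct values other than the corner value $1$. Then the coefficient of $m_\mu$ in $\ch D_k$, for $\ell=\ell(\mu)$, is
\[
(k-1)\binom{\ell-1}{n-k}-(n-k+1)\binom{\ell-1}{n-k+1}.
\]
On the other side, the coefficient of $m_\mu$ in $h_jm_{(k-j,1^{n-k})}$ counts the ways to write $\mu$ as a hook monomial $(k-j,1^{n-k})$ plus an arbitrary composition of $j$ placed on the parts. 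Equivalently, it counts choices of one part $\mu_r\ge k-j$ to carry $k-j$, together with $n-k$ further distinct parts each carrying a $1$; all remaining mass goes to $h_j$.

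I would verify the identity by a generating-function check in the variables (the parts of $\mu$). A cleaner route is to show both sides satisfy the same recursion in $k$ via $h_j m_{(a,1^b)}$ and the Pieri rule. Alternatively, one can use the known identity $s_{(k,1^{n-k})}=\sum_{i}(-1)^i h_{k+i}e_{n-k-i}$ and compute $\ch D_k$ directly in the $h$–$e$ basis, comparing with $\sum_j h_j m_{(k-j,1^{n-k})}$ through $m_{(a,1^b)}$ expressed via $p_a e_b$-type recursions. This combinatorial identity is the main obstacle. I would first test it for $k=2$, where it reads $\ch D_2=m_{(2,1^{n-2})}$ and $D_2=\chi^{(2,1^{n-2})}-(n-1)\chi^{(1^n)}$. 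That case is consistent with $s_{(2,1^{n-2})}=m_{(2,1^{n-2})}+(n-1)m_{(1^n)}$ and $s_{(1^n)}=m_{(1^n)}$. I would then induct on $k$.

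\emph{Step 3: positivity.} Once $\ch D_k$ is known, $D_k$ is the sum over $j$ of induction products $\eta^{(j)}\cdot\phi^{(k-j,1^{n-k})}$. Equation \eqref{eq:indprod}, with $d_{\eta^{(j)}}=\per$, gives the displayed formula for $d_{D_k}$. Each factor is KL-positive: $\per=d_{\chi^{(j)}}$ by Haiman, and the hook monomial by Theorem~\ref{thm:main}. Note that $k-j\ge2$, so the per-vertex decomposition applies; $m_{(1^{n-j})}$ never occurs. Theorem~\ref{thm:R2}(3), or equivalently Corollary~\ref{cor:semiring}, then gives KL-positivity of $D_k$. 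Since the prefactor is positive and $\Imm_w\ge0$ on $\TNN_n$, each difference $N_{(k,1^{n-k})}-N_{(k-1,1^{n-k+1})}$ is a non-negative combination of Kazhdan--Lusztig immanants. Chaining over $k=n,\dots,2$ gives $\per\ge N_{(n-1,1)}\ge\cdots\ge\det$.
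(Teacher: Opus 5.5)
The gap is Step~2, which is the only non-formal content of the corollary. Your Steps~1 and~3 are correct and match the paper: once $\ch D_k=\sum_j h_jm_{(k-j,1^{n-k})}$ is known, \eqref{eq:indprod} gives the formula for $d_{D_k}$, and Corollary~\ref{cor:semiring} gives KL-positivity. But for the identity itself you only list three possible routes, check $k=2$, and say you would induct on $k$. You never state a recursion or carry out a comparison, so the identity is left unproved. The paper proves it with generating functions. It puts $F=\sum_{a,b\ge0}s_{(a+1,1^b)}x^ay^b$, uses $(x+y)F=H(x)E(y)-1$, and reads $\ch D_k$ off as the coefficient of $x^{k-1}y^{n-k}$ in $x(\partial_x-\partial_y)F$.

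Your own Kostka setup closes the gap directly, and more elementarily. Let $\ell=\ell(\mu)$.
\begin{itemize}
\item \emph{Left side.} Since $(n-k+1)\binom{\ell-1}{n-k+1}=(\ell-1-n+k)\binom{\ell-1}{n-k}$, your coefficient of $m_\mu$ in $\ch D_k$ simplifies to $(n-\ell)\binom{\ell-1}{n-k}$.
\item \emph{Right side.} The coefficient of $x^\mu$ in $h_jm_{(k-j,1^{n-k})}$ is the number of monomials of $m_{(k-j,1^{n-k})}$ dividing $x^\mu$, since $h_j$ supplies the complementary monomial with coefficient~$1$. Because $k-j\ge2$, such a monomial is determined by a position $r$ with $\mu_r\ge k-j$ and an $(n-k)$-subset of $[\ell]\setminus\{r\}$. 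Summing over $j$ gives $\binom{\ell-1}{n-k}\sum_{c=2}^{k}\#\{r:\mu_r\ge c\}=\binom{\ell-1}{n-k}\sum_{r=1}^{\ell}(\min(\mu_r,k)-1)$.
\item \emph{Comparison.} If $\binom{\ell-1}{n-k}\ne0$ then $\ell\ge n-k+1$, so $\mu_1\le n-(\ell-1)\le k$. Hence $\min(\mu_r,k)=\mu_r$ for every $r$, the sum equals $n-\ell$, and the two coefficients agree.
\end{itemize}
With this computation written out, your proposal would be a complete proof, and a shorter one than the paper's generating-function argument.
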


\begin{proof}
Let $H(x)=\sum_jh_jx^j$ and $E(y)=\sum_je_jy^j$ in variables $x_1,x_2,\dots$, and $F=\sum_{a,b\ge0}s_{(a+1,1^b)}x^ay^b$. The hook Schur functions satisfy $(x+y)F=H(x)E(y)-1$. The coefficient of $x^{k-1}y^{n-k}$ in $x(\partial_x-\partial_y)F$ is $\ch D_k$, and since $(\partial_x-\partial_y)(x+y)=0$,
\[
x(\partial_x-\partial_y)F=\frac{x\,(H'E-HE')}{x+y}=xHE\sum_i\frac{x_i^2}{(1-xx_i)(1+yx_i)},
\]
using $H'/H=\sum_ix_i/(1-xx_i)$ and $E'/E=\sum_ix_i/(1+yx_i)$. On the other hand $\sum_bm_{(c,1^b)}y^b=\sum_ix_i^c\prod_{j\ne i}(1+yx_j)$ for $c\ge2$, so $H(x)\sum_{c\ge2,\,b\ge0}m_{(c,1^b)}x^{c-1}y^b$ is the same series. Comparing coefficients of $x^{k-1}y^{n-k}$ gives the first identity. The second follows from \eqref{eq:indprod}. KL-positivity is Corollary~\ref{cor:semiring}.
\end{proof}

Skandera's proofs \cite{Ska25} give total non-negativity: one counts tableaux on planar networks, and the other expands $D_k$ in the immanants of $\theta_\ell=\sum_{\ell(\mu)=\ell}\phi^\mu$, whose KL-positivity is open (Problem~\ref{prob:theta}). Corollary~\ref{cor:skandera} avoids the $\theta_\ell$. For total non-negativity alone, the theorem of Dong--Wu--Gao \cite{DWG} can replace Theorem~\ref{thm:main} in the proof.

The power sums $p_k=m_{(k)}$ are hook monomial symmetric functions, so Corollary~\ref{cor:semiring} contains every $s_\mu p_k$. The Murnaghan--Nakayama rule expands these in Schur functions. For a border strip $\lambda/\mu$ let $\mathrm{ht}(\lambda/\mu)$ be the number of its rows minus one.

\begin{corollary}\label{cor:MN}
Let $k\ge1$ and $\mu\vdash n-k$, and let $\psi_{\mu,k}=\sum_\lambda(-1)^{\mathrm{ht}(\lambda/\mu)}\chi^\lambda$, the sum over all $\lambda\vdash n$ such that $\lambda/\mu$ is a border strip with $k$ boxes. Then $\psi_{\mu,k}$ is KL-positive, so
\[
\sum_\lambda(-1)^{\mathrm{ht}(\lambda/\mu)}\Imm_{\chi^\lambda}\ge0\quad\text{on }\TNN_n.
\]
In particular, let $r\ge1$, $n=\binom{r+1}2+2$ and $\lambda_r=(r+2,r-1,r-2,\dots,1)$. Then $N_{\lambda_r}\ge N_{\lambda_r'}$ on $\TNN_n$, and $f^{\lambda_r}(N_{\lambda_r}-N_{\lambda_r'})$ is a non-negative combination of Kazhdan--Lusztig immanants.
\end{corollary}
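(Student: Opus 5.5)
The plan has two parts. First, show that $\psi_{\mu,k}$ is KL-positive, using Corollary~\ref{cor:semiring}. Then, for the staircase case, identify $\psi_{\mu,k}$ as a two-term difference.

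For the first part, the Murnaghan--Nakayama rule gives
\[
s_\mu p_k=\sum_\lambda(-1)^{\mathrm{ht}(\lambda/\mu)}s_\lambda ,
\]
where the sum runs over border strips $\lambda/\mu$ with $k$ boxes. Hence $\ch\psi_{\mu,k}=s_\mu p_k$. Since $p_k=m_{(k)}$ is the hook monomial with $b=0$, $s_\mu p_k$ lies in $\mathcal R$. Corollary~\ref{cor:semiring} then gives that $\psi_{\mu,k}$ is KL-positive. Concretely, $\psi_{\mu,k}=\chi^\mu\cdot\phi^{(k)}$ is an induction product, covered by Theorem~\ref{thm:R2}(3), Haiman's lemma and Theorem~\ref{thm:main} with $a=n=k$. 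Non-negativity of $d_{\psi_{\mu,k}}=\sum_\lambda(-1)^{\mathrm{ht}}\Imm_{\chi^\lambda}$ on $\TNN_n$ follows from \eqref{eq:duality} and $\Imm_w\ge0$.

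For the staircase statement, take $\mu=\delta_r=(r,r-1,\dots,1)$, so that $|\mu|=\binom{r+1}2$, and take $k=2$. Every inner corner of $\delta_r$ is adjacent to another addable cell only through dominoes. The addable cells of $\delta_r$ are $(i,r+2-i)$ for $i=1,\dots,r+1$, and no two of them are adjacent. A border strip of size $2$ is therefore a domino placed either horizontally in row $1$ or vertically in column $1$. A domino elsewhere would need two adjacent addable positions, which do not exist, since after adding one cell $(i,r+2-i)$ the next cell in its row or column must itself be addable in $\delta_r$ plus that cell. That forces $i=1$ (horizontal, giving $(r+2,r-1,\dots,1)=\lambda_r$, height $0$) or $i=r+1$ (vertical, giving $\lambda_r'$, height $1$). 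This check is the main point to verify carefully; it amounts to noting that the cell $(i,r+3-i)$ or $(i+1,r+2-i)$ must have its upper or left neighbour present.

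Thus $\psi_{\delta_r,2}=\chi^{\lambda_r}-\chi^{\lambda_r'}$. Since $f^{\lambda_r}=f^{\lambda_r'}$ (conjugate shapes), we get
\[
f^{\lambda_r}\bigl(N_{\lambda_r}-N_{\lambda_r'}\bigr)=d_{\psi_{\delta_r,2}} .
\]
This is KL-positive by the first part, hence non-negative on $\TNN_n$.
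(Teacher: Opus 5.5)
Your proposal is correct and follows the paper's proof: the Murnaghan--Nakayama rule gives $\ch\psi_{\mu,k}=s_\mu p_k$, so $\psi_{\mu,k}=\chi^\mu\cdot\phi^{(k)}$ is KL-positive by Haiman's lemma and Theorem~\ref{thm:R2}(3), and $\delta_r$ has exactly the two addable dominoes you find, giving $\psi_{\delta_r,2}=\chi^{\lambda_r}-\chi^{\lambda_r'}$. The one difference is how you get KL-positivity of $\phi^{(k)}$: the paper does not use Theorem~\ref{thm:main} for this, because $\phi^{(k)}$ is the non-negative function equal to $k$ on $k$-cycles and $0$ elsewhere and $\Cp w$ has non-negative coefficients, so $\phi^{(k)}(\Cp w)\ge0$ directly; this makes the corollary independent of the hook induction.
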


\begin{proof}
By the Murnaghan--Nakayama rule $\ch\psi_{\mu,k}=s_\mu p_k$, so Corollary~\ref{cor:semiring} applies. Theorem~\ref{thm:main} is not needed here: $p_k=\ch\phi^{(k)}$, where $\phi^{(k)}(\sigma)=k$ if $\sigma$ is a $k$-cycle and $\phi^{(k)}(\sigma)=0$ otherwise. This is a non-negative function, and $\Cp w$ has non-negative coefficients, so $\phi^{(k)}$ is KL-positive. Then $\psi_{\mu,k}=\chi^\mu\cdot\phi^{(k)}$ is KL-positive by Haiman \cite[Lemma~1.1]{Hai} and Theorem~\ref{thm:prod}.

For the staircase $\delta_r=(r,r-1,\dots,1)$ and $k=2$, the border strips are dominoes. Consecutive rows and consecutive columns of $\delta_r$ differ in length by one, so $\delta_r$ has exactly two addable dominoes: a horizontal one in the first row, giving $\lambda_r$ with sign $+1$, and a vertical one in the first column, giving $(r,r-1,\dots,2,1,1,1)=\lambda_r'$ with sign $-1$. Hence $\psi_{\delta_r,2}=\chi^{\lambda_r}-\chi^{\lambda_r'}$. Conjugate shapes have the same dimension, so $d_{\psi_{\delta_r,2}}=f^{\lambda_r}(N_{\lambda_r}-N_{\lambda_r'})$.
\end{proof}

The smallest case with three terms is $\mu=(1,1)$, $k=2$: $s_{11}p_2=s_{31}-s_{22}-s_{1111}$, so $\Imm_{\chi^{(3,1)}}\ge\Imm_{\chi^{(2,2)}}+\det$ on $\TNN_4$. The staircase inequalities for $r=1,2$ are $\per\ge\det$ on $\TNN_3$ and $N_{(4,1)}\ge N_{(2,1,1,1)}$, a composite of two steps of Corollary~\ref{cor:skandera}. For $r=3$ the inequality is $N_{(5,2,1)}\ge N_{(3,2,1,1,1)}$ on $\TNN_8$, whose larger side is not a hook. More generally, Corollary~\ref{cor:semiring} makes every $s_\mu m_{(a,1^b)}$ KL-positive; for $b\ge1$ this does use Theorem~\ref{thm:main}.

Total non-negativity in Corollary~\ref{cor:MN} also follows directly from \eqref{eq:indprod}: principal submatrices of a totally non-negative matrix are totally non-negative, $\Imm_{\chi^\mu}\ge0$ there by Stembridge \cite{Ste91}, and for a $k\times k$ matrix $y$, $d_{\phi^{(k)}}(y)=k\sum_\sigma\prod_iy_{i\sigma(i)}$, summed over the $k$-cycles $\sigma$, is non-negative whenever $y$ has non-negative entries. The new content is the Kazhdan--Lusztig statement. For $k=2$ the inequalities also hold on Hermitian positive semidefinite matrices, by the same argument with Schur's theorem \cite{Sch18} in place of Stembridge's, since $d_{\phi^{(2)}}(y)=2|y_{12}|^2$ for a Hermitian $2\times2$ matrix $y$.

\section{Three cones}\label{sec:cones}

In this section we set up the cones $\CKL\subseteq\CTNN\subseteq\CJ$ of the introduction (Figure~\ref{fig:cones}), and show how they organise the known results. Let $w_0(\alpha)$ be the longest element of $S_\alpha$.

\begin{lemma}\label{lem:J}
\begin{enumerate}
\item For every composition $\alpha$ of $n$, $J_\alpha\in\TNN_n$.
\item For every class function $\phi$, $d_\phi(J_\mu)=\mu!\,\langle\phi,\eta^\mu\rangle$. In particular $\Imm_\lambda(J_\mu)=\mu!\,\delta_{\lambda\mu}$.
\item For every $w\in S_n$, $\Imm_w(J_\alpha)=\delta_{w,w_0(\alpha)}$. Hence $d_\phi(J_\mu)=\phi(\Cp{w_0(\mu)})$.
\end{enumerate}
\end{lemma}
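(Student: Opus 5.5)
The three parts are independent, and I would prove them in the order stated. Each reduces to identifying which permutation monomials survive at $J_\alpha$. The one structural input is that $x^v(J_\alpha)=\prod_i (J_\alpha)_{i,v(i)}$ equals $1$ if $v\in S_\alpha$ and $0$ otherwise, because $(J_\alpha)_{ij}=1$ exactly when $i$ and $j$ lie in the same block of consecutive indices.

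For (1) I would look at an arbitrary square submatrix $J_\alpha[R,C]$. Its entry in position $(r,c)$ is $1$ iff $r$ and $c$ lie in the same block. Since the blocks are intervals and $R,C$ are listed in increasing order, the submatrix is block diagonal with rectangular all-ones blocks $R_t\times C_t$ placed in order along the diagonal, where $R_t=R\cap B_t$ and $C_t=C\cap B_t$ for the blocks $B_t$.
\begin{itemize}
\item If some $|R_t|\ne|C_t|$, every term in the Leibniz expansion vanishes, since some row in $R_t$ would have to be matched to a column outside $C_t$. The minor is then $0$.
\item Otherwise the minor is $\prod_t\det(\text{all-ones }|R_t|\times|R_t|)$. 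This equals $1$ if every $|R_t|\le1$ and $0$ if some $|R_t|\ge2$.
\end{itemize}
So every minor lies in $\{0,1\}$, and $J_\alpha\in\TNN_n$.

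For (2), the structural input gives
\[
d_\phi(J_\mu)=\sum_{\sigma\in S_\mu}\phi(\sigma)=\mu!\,\langle \operatorname{Res}^{S_n}_{S_\mu}\phi,1\rangle_{S_\mu}.
\]
Frobenius reciprocity turns the right side into $\mu!\,\langle\phi,\eta^\mu\rangle$. The special case $\Imm_\lambda(J_\mu)=\mu!\,\delta_{\lambda\mu}$ is then the defining duality $\langle\phi^\lambda,\eta^\mu\rangle=\delta_{\lambda\mu}$.

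For (3), rather than evaluating the alternating formula for $\Imm_w$ directly, I would use the duality \eqref{eq:duality}. For any $f\colon S_n\to\R$ we have $d_f(J_\alpha)=f\bigl(\sum_{\sigma\in S_\alpha}\sigma\bigr)$. The key step, which I expect to be the only nontrivial point, is the identity
\[
\sum_{\sigma\in S_\alpha}\sigma=\Cp{w_0(\alpha)}.
\]
Two facts give this.
\begin{itemize}
\item The Bruhat interval $[e,w_0(\alpha)]$ in $S_n$ is exactly $S_\alpha$. Anything below an element of a parabolic subgroup lies in that subgroup, via subwords of a reduced word in the generators of $S_\alpha$. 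Conversely, $w_0(\alpha)$ is the maximum of $S_\alpha$.
\item For $v\le w_0(\alpha)$ we have $P_{v,w_0(\alpha)}=1$. Kazhdan--Lusztig polynomials for pairs in a standard parabolic subgroup agree with those computed in the subgroup, and the subgroup is the product $\prod S_{\alpha_t}$. There the polynomials factor, and $P_{u,w_0}=1$ in each symmetric group factor (as already used for $\Imm_e=\det$).
\end{itemize}

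Hence $d_f(J_\alpha)=f(\Cp{w_0(\alpha)})$ for every $f$. Comparing with $d_f(J_\alpha)=\sum_w f(\Cp w)\Imm_w(J_\alpha)$ for all $f$ gives $\Imm_w(J_\alpha)=\delta_{w,w_0(\alpha)}$. This comparison is legitimate because $\{\Cp w\}$ is a basis of $\R[S_n]$, so the values $f(\Cp w)$ can be chosen arbitrarily. The final statement follows by taking $f=\phi$ and $\alpha=\mu$.
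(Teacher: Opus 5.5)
Your proposal is correct and follows the paper's proof essentially step for step: the block-diagonal minor argument for (1), the identity $x^v(J_\mu)=[v\in S_\mu]$ with Frobenius reciprocity for (2), and $\Cp{w_0(\alpha)}=\sum_{v\in S_\alpha}v$ combined with \eqref{eq:duality} and the basis property of $\{\Cp w\}$ for (3). One wording fix in (1): when $|R_t|<|C_t|$, the vanishing comes from some other block $t'$ with $|R_{t'}|>|C_{t'}|$, which exists because the totals agree, not from a row of $R_t$ itself.
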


\begin{proof}
(1) Order the rows and columns of a square submatrix by the blocks they lie in. If some block contributes more rows than columns, those rows are supported on too few columns and the minor vanishes. Otherwise every block contributes equally many rows and columns, since the totals agree, so the submatrix is block diagonal with square all-ones blocks, and the minor is a product of determinants of all-ones matrices, each equal to $0$ or $1$.
(2) $\prod_i(J_\mu)_{i,\sigma(i)}=[\sigma\in S_\mu]$, so $d_\phi(J_\mu)=\sum_{\sigma\in S_\mu}\phi(\sigma)=\mu!\,\langle\phi,\eta^\mu\rangle$ by Frobenius reciprocity.
(3) The lower interval of $w_0(\alpha)$ is $S_\alpha$, and $P_{v,w_0(\alpha)}=1$ for $v\in S_\alpha$, so $\Cp{w_0(\alpha)}=\sum_{v\in S_\alpha}v$. For every $f$, \eqref{eq:duality} at $J_\alpha$ gives $\sum_wf(\Cp w)\Imm_w(J_\alpha)=\sum_{v\in S_\alpha}f(v)=f(\Cp{w_0(\alpha)})$. Since $\{\Cp w\}$ is a basis and $f$ is arbitrary, $\Imm_w(J_\alpha)=\delta_{w,w_0(\alpha)}$.
\end{proof}

\begin{theorem}\label{thm:cones}
For every $n$, $\CKL\subseteq\CTNN\subseteq\CJ$, and $\CJ=\operatorname{cone}\{\phi^\lambda:\lambda\vdash n\}$ is simplicial. Consequently:
\begin{enumerate}
\item Conjecture~\ref{conj:MN} holds for $n$ iff $\CTNN=\CJ$, iff every class function $\phi$ with $d_\phi(J_\mu)\ge0$ for all $\mu$ satisfies $d_\phi\ge0$ on $\TNN_n$;
\item Conjecture~\ref{conj:H} holds for $n$ iff $\CKL=\CJ$, iff KL-positivity of a class function is detected by its $p(n)$ coefficients at the elements $w_0(\mu)$;
\item Conjecture~\ref{conj:H} holds for $n$ iff $\CKL=\CTNN=\CJ$; in particular it implies Conjecture~\ref{conj:MN} and $\CKL=\CTNN$.
\end{enumerate}
\end{theorem}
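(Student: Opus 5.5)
The plan is to prove the two inclusions first, then the description of $\CJ$, and to read off (1)--(3) formally from these.

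For $\CKL\subseteq\CTNN$: if $\phi(\Cp w)\ge0$ for all $w$, then \eqref{eq:duality} writes $d_\phi=\sum_w\phi(\Cp w)\Imm_w$ as a non-negative combination of the $\Imm_w$. These are non-negative on $\TNN_n$ by Rhoades--Skandera, so $d_\phi\ge0$ there. For $\CTNN\subseteq\CJ$: Lemma~\ref{lem:J}(1) shows that each $J_\mu$ lies in $\TNN_n$, so the defining inequalities of $\CJ$ are among those of $\CTNN$.

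For the description of $\CJ$ I would use Lemma~\ref{lem:J}(2). It gives $d_\phi(J_\mu)=\mu!\,\langle\phi,\eta^\mu\rangle$, so the conditions defining $\CJ$ read $\langle\phi,\eta^\mu\rangle\ge0$ for all $\mu\vdash n$. The $\phi^\lambda$ form a basis of $\CF_n$ dual to the $\eta^\mu$. Expanding $\phi=\sum_\lambda\langle\phi,\eta^\lambda\rangle\phi^\lambda$ therefore shows that $\phi\in\CJ$ iff all its coordinates in this basis are non-negative. Hence $\CJ=\operatorname{cone}\{\phi^\lambda\}$. This cone is simplicial, since its $p(n)=\dim\CF_n$ generators are linearly independent.

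The consequences are then bookkeeping.
\begin{enumerate}
\item Conjecture~\ref{conj:MN} says $\phi^\lambda\in\CTNN$ for all $\lambda$. Since $\CTNN$ is a convex cone, this is equivalent to $\CJ\subseteq\CTNN$, which together with the inclusion already proved is $\CTNN=\CJ$. The restatement is just unwinding the definitions of the two cones.
\item Conjecture~\ref{conj:H} says each $\phi^\lambda\in\CKL$. For the same reason this is equivalent to $\CKL=\CJ$. By Lemma~\ref{lem:J}(3) the defining conditions of $\CJ$ are $\phi(\Cp{w_0(\mu)})\ge0$, so $\CKL=\CJ$ says precisely that KL-positivity is detected at the $p(n)$ elements $w_0(\mu)$.
\item If $\CKL=\CJ$, then the sandwich $\CKL\subseteq\CTNN\subseteq\CJ$ collapses, giving $\CKL=\CTNN=\CJ$. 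The converse is trivial. In particular $\CTNN=\CJ$, which is Conjecture~\ref{conj:MN} by (1).
\end{enumerate}

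No step looks like a genuine obstacle. The only point needing care is the duality bookkeeping identifying $\CJ$ with the cone on the $\phi^\lambda$, which rests on the basis property of the $\phi^\lambda$ and on Lemma~\ref{lem:J}(2).
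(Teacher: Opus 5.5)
Your proposal is correct and follows the paper's proof essentially step for step. The inclusions come from \eqref{eq:duality} with the Rhoades--Skandera positivity and from Lemma~\ref{lem:J}(1); $\CJ$ is identified as the non-negative orthant in the basis $\{\phi^\lambda\}$ dual to $\{\eta^\mu\}$ via Lemma~\ref{lem:J}(2); the reformulation in (2) uses Lemma~\ref{lem:J}(3); and (3) follows from the chain of inclusions.
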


\begin{proof}
The first inclusion is \eqref{eq:duality} together with $\Imm_w\ge0$ on $\TNN_n$, and the second is Lemma~\ref{lem:J}(1). By Lemma~\ref{lem:J}(2), $\phi\in\CJ$ iff $\langle\phi,\eta^\mu\rangle\ge0$ for all $\mu$, iff $\phi$ has non-negative coordinates in the basis $\{\phi^\lambda\}$ dual to $\{\eta^\mu\}$. So $\CTNN=\CJ$ iff every generator $\phi^\lambda$ lies in $\CTNN$, which is Conjecture~\ref{conj:MN}, and similarly for $\CKL$. The reformulation in (2) is Lemma~\ref{lem:J}(3). Part~(3) follows from the chain of inclusions.
\end{proof}

The inclusion $\CKL\subseteq\CJ$, by evaluation at the longest elements $w_0(\mu)$ of the Young subgroups, is due to Haiman, who observed that any class function with the positivity of Conjecture~\ref{conj:H} is a non-negative combination of monomial characters \cite[\S2 and Prop.~4.1(1)]{Hai}.

\emph{The dual picture.} For $h\in\Z[S_n]$ put $\Phi(h)=\sum_\lambda\phi^\lambda(h)\,e_\lambda$. Since $\chi^a=\sum_\lambda K_{a,\lambda}\phi^\lambda$ and $h_\lambda=\sum_aK_{a,\lambda}s_a$, we have $\sum_a\chi^a(h)\,s_a=\sum_\lambda\phi^\lambda(h)\,h_\lambda$, so $\Phi(h)=\omega\bigl(\sum_a\chi^a(h)\,s_a\bigr)$, the image under $\omega$ of the characteristic $\ch(h)$ used in Section~\ref{sec:clean}. If $\phi=\sum_\lambda\gamma_\lambda\phi^\lambda$, then $\phi(h)=\ell_\gamma(\Phi(h))$ for the linear functional $\ell_\gamma(e_\lambda)=\gamma_\lambda$. By Lemma~\ref{lem:J}(2),(3), $\Phi(\Cp{w_0(\mu)})=\mu!\,e_\mu$. So $\CJ$ consists of the $\phi$ with $\phi(h)\ge0$ whenever $\Phi(h)$ is $e$-positive, $\CKL$ is dual to the cone spanned by the $\Phi(\Cp w)$, and Conjecture~\ref{conj:H} says that every $\Phi(\Cp w)$ is $e$-positive. At codominant $w$, $\Phi(\Cp w)$ is Stanley's chromatic symmetric function $X_P$ of a unit interval order \cite[Cor.~7.2]{CHSS}, so there Conjecture~\ref{conj:H} is the Stanley--Stembridge conjecture, proved by Hikita \cite{Hik}; smooth $w$ reduce to codominant ones (Proposition~\ref{thm:smooth}). Haiman had already noted that at codominant $w$ Conjecture~\ref{conj:H} is equivalent to a case of Stembridge's conjecture that the monomial immanants of Jacobi--Trudi matrices are Schur-positive \cite[\S5]{Hai}, \cite{Ste92}.

\begin{figure}[ht]
\centering
\begin{tikzpicture}[scale=1.0]
  \coordinate (O) at (0,0);
  \fill[black!6] (O) -- (-3,3.2) -- (3,3.2) -- cycle;
  \draw[thick] (O) -- (-3,3.2) (O) -- (3,3.2);
  \fill[black!14] (O) -- (-2,3.2) .. controls (-0.7,3.4) and (0.7,3.4) .. (2,3.2) -- cycle;
  \draw (O) -- (-2,3.2) (O) -- (2,3.2);
  \fill[black!26] (O) -- (-1,3.2) -- (1,3.2) -- cycle;
  \draw (O) -- (-1,3.2) (O) -- (1,3.2);
  \node[font=\small] at (0,2.75) {$\CKL$};
  \node[font=\small] at (1.3,2.95) {$\CTNN$};
  \node[font=\small] at (1.92,2.4) {$\CJ$};
  \node[font=\scriptsize,above] at (-3,3.2) {$\phi^\lambda$};
  \node[font=\scriptsize,above] at (3,3.2) {$\phi^{\lambda'}$};
  \node[font=\scriptsize,below] at (O) {$0$};
\end{tikzpicture}
\caption{A schematic slice of the three cones of Theorem~\ref{thm:cones}. The outer cone $\CJ$ is simplicial, with extreme rays $\phi^\lambda$, $\lambda\vdash n$. Conjecture~\ref{conj:MN} says that the middle cone fills it, and Conjecture~\ref{conj:H} that the inner one does. For $n\le10$ all three coincide \cite{McN}, so the picture shows only the logical possibilities, not an actual $n$.}\label{fig:cones}
\end{figure}

\subsection{What is known}
In this language, the known general results read as follows.
\begin{itemize}
\item Rhoades--Skandera \cite{RS06}: $\Imm_w\ge0$ on $\TNN_n$, which gives $\CKL\subseteq\CTNN$.
\item Haiman \cite[Lemma~1.1]{Hai}: every irreducible character lies in $\CKL$. This refines Stembridge's theorem \cite{Ste91} that $\chi^a\in\CTNN$.
\item Clearman--Shelton--Skandera \cite{CSS}: $\phi^\lambda\in\CKL$ when $\lambda_1\le2$. Stembridge \cite[Thms.~2.7, 2.8]{Ste92}: $\phi^\lambda\in\CTNN$ for rectangles and $(2,1^{n-2})$. Dong--Wu--Gao \cite{DWG}: $\phi^\lambda\in\CTNN$ for hooks (indeed under the weaker hypothesis of Stembridge's \cite[Question~2.9]{Ste92}).
\item This paper: $\phi^\lambda\in\CKL$ for hooks (Theorem~\ref{thm:main}); $\CKL$ is closed under induction products (Theorem~\ref{thm:R2}), and so are $\CTNN$, by \eqref{eq:indprod}, and $\CJ$, since products of monomial symmetric functions are monomial-positive. This is consistent with the three cones being equal.
\item The companion paper \cite{McN}: Haiman's conjecture, at generic $q$, for $n\le10$; hence $\CKL=\CTNN=\CJ$ for $n\le10$.
\end{itemize}
Since each $\phi^\lambda$ spans an extreme ray of $\CJ\supseteq\CKL$, a sum of elements of $\CKL$ is a multiple of $\phi^\lambda$ only if every summand is; and an induction product of two non-zero elements of $\CJ$ is never a multiple of a single $\phi^\lambda$, since a product $m_\alpha m_\beta$ of non-empty monomial symmetric functions contains both $m_{\alpha\cup\beta}$ and $m_{\alpha+\beta}$. So sums and induction products of KL-positive class functions never prove $\phi^\lambda\in\CKL$ for a new $\lambda$, although they do produce new elements of $\CKL$, as in Corollary~\ref{cor:skandera}. A proof for a new shape must leave the class functions, as the proof of Theorem~\ref{thm:main} does with the per-vertex functions $f^{(a)}_i$, which are not class functions; off the class functions $\CKL$ and $\CTNN$ already differ for $n=4$ (Section~\ref{sec:beyond}).

\subsection{Inequalities between normalised immanants}
For $a,b\vdash n$ write $a\succeq b$ if $N_a\ge N_b$ on $\TNN_n$. The cones turn this order into a question about Kostka numbers.

\begin{proposition}\label{prop:ordercrit}
Let $a,b\vdash n$. If $a\succeq b$ then $K_{a,\mu}/f^a\ge K_{b,\mu}/f^b$ for all $\mu\vdash n$, and if this criterion fails at $\mu$ then $N_a(J_\mu)<N_b(J_\mu)$. Conversely, if Conjecture~\ref{conj:MN} holds for $n$ and the criterion holds, then $a\succeq b$. In particular, $a\succeq b$ implies that $a$ dominates $b$.
\end{proposition}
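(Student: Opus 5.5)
The plan is to evaluate both normalised immanants at the test matrices $J_\mu$ and read everything off Lemma~\ref{lem:J}(2) and Theorem~\ref{thm:cones}. By Young's rule $\langle\chi^a,\eta^\mu\rangle=K_{a,\mu}$, so Lemma~\ref{lem:J}(2) gives
\[
N_a(J_\mu)=\frac{\mu!\,K_{a,\mu}}{f^a},\qquad N_a(J_\mu)-N_b(J_\mu)=\mu!\Bigl(\frac{K_{a,\mu}}{f^a}-\frac{K_{b,\mu}}{f^b}\Bigr).
\]
Since each $J_\mu$ is totally non-negative by Lemma~\ref{lem:J}(1), the inequality $N_a\ge N_b$ on $\TNN_n$ forces the criterion at every $\mu$. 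Because $\mu!>0$, a failure of the criterion at $\mu$ gives $N_a(J_\mu)<N_b(J_\mu)$ directly.

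For the converse, I would apply Theorem~\ref{thm:cones}(1) to the class function $\phi=\chi^a/f^a-\chi^b/f^b$, so that $d_\phi=N_a-N_b$. Its coordinates in the monomial basis are $\langle\phi,\eta^\mu\rangle=K_{a,\mu}/f^a-K_{b,\mu}/f^b$. These are non-negative by the criterion, so $\phi\in\CJ$. If Conjecture~\ref{conj:MN} holds for $n$ then $\CJ=\CTNN$, hence $d_\phi\ge0$ on $\TNN_n$, which is exactly $a\succeq b$.

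For the last claim, I would use the criterion at $\mu=b$. Since $K_{b,b}=1$, the right-hand side is $1/f^b>0$, so $K_{a,b}>0$. By the standard fact that $K_{a,\mu}\ne0$ iff $a$ dominates $\mu$, it follows that $a$ dominates $b$. There is no serious obstacle here. The only point needing care is to avoid circularity: the dominance statement uses only the necessity direction, which needs no conjecture.
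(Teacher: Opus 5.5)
Your proposal is correct and follows the paper's own proof essentially step for step: it evaluates at $J_\mu$ via Lemma~\ref{lem:J}(2) and Young's rule, uses Theorem~\ref{thm:cones}(1) for the converse, and takes $\mu=b$ with $K_{b,b}=1$ for dominance. The only difference is cosmetic: you work with $\chi^a/f^a-\chi^b/f^b$, whereas the paper uses the rescaled class function $f^b\chi^a-f^a\chi^b$.
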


\begin{proof}
$f^af^b(N_a-N_b)=d_\phi$ with $\phi=f^b\chi^a-f^a\chi^b$, and $d_\phi(J_\mu)=\mu!\,(f^bK_{a,\mu}-f^aK_{b,\mu})$ by Lemma~\ref{lem:J}. The first statement holds because $J_\mu\in\TNN_n$, and the converse is Theorem~\ref{thm:cones}(1). At $\mu=b$ the criterion gives $K_{a,b}\ge f^aK_{b,b}/f^b>0$, so $a$ dominates $b$.
\end{proof}

The first statement is Stembridge's \cite[Prop.~3.1]{Ste92}, and he notes that Conjecture~\ref{conj:MN} gives the converse \cite[\S3]{Ste92}.

\begin{proposition}\label{prop:topbottom}
For every $n$ and $a\vdash n$, $\per\ge N_a\ge\det$ on $\TNN_n$, and both differences are KL-positive.
\end{proposition}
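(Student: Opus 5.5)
The plan is to write each difference as a single immanant $d_\phi$ of a class function $\phi$, and to identify $\phi$ as an explicit non-negative combination of things already known to lie in $\CKL$.

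\emph{Upper difference.} Put
\[
\phi=\chi^{(n)}-\chi^a/f^a ,
\]
so that $\per-N_a=d_\phi$. Then $\phi(\sigma)=1-\chi^a(\sigma)/f^a$. Since $\chi^a(\sigma)$ is a trace of a unitary matrix of size $f^a$, we have $|\chi^a(\sigma)|\le f^a$, so $\phi(\sigma)\ge0$ for every $\sigma$. Each $\Cp w$ has non-negative coefficients, so $\phi(\Cp w)\ge0$ and $\phi\in\CKL$. This is the same argument the proof of Corollary~\ref{cor:MN} uses for $\phi^{(k)}$. Positivity on $\TNN_n$ then follows from \eqref{eq:duality} and $\Imm_w\ge0$.

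\emph{Lower difference.} Put
\[
\psi=\chi^a/f^a-\sgn ,
\]
so that $N_a-\det=d_\psi$. From $\sgn(\Cp w)=\delta_{w,e}$ we get
\[
\psi(\Cp w)=\chi^a(\Cp w)/f^a-\delta_{w,e}.
\]
For $w\ne e$ this is $\chi^a(\Cp w)/f^a\ge0$ by Haiman \cite[Lemma~1.1]{Hai}. For $w=e$ we have $\Cp e=e$, so $\psi(\Cp e)=f^a/f^a-1=0$. Hence $\psi\in\CKL$.

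Both differences therefore lie in $\CKL$, and Theorem~\ref{thm:cones} (the inclusion $\CKL\subseteq\CTNN$) gives the inequalities on $\TNN_n$. The coefficients are real rather than integral because of the normalisation; after multiplying by $f^a$ they are non-negative integers.

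There is no substantial obstacle. The two points that need care are the character bound $|\chi^a|\le f^a$ and the evaluation at $w=e$, where the determinant's coefficient $\delta_{w,e}$ is cancelled exactly by the normalisation $\chi^a(e)=f^a$.
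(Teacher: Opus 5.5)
Your proof is correct and follows the paper's. For the upper difference you use the pointwise bound $1-\chi^a(v)/f^a\ge0$, for which you supply the justification $|\chi^a|\le f^a$ that the paper leaves implicit, together with $P_{v,w}(1)\ge0$; the paper states the same fact as KL-positivity of each monomial $x^v$. For the lower difference you use Haiman's lemma together with the exact cancellation of the coefficient of $\Imm_e=\det$, exactly as the paper does.
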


\begin{proof}
The coefficient of $x^v$ in $\per-N_a$ is $1-\chi^a(v)/f^a\ge0$, and every monomial is KL-positive. For the bottom, $N_a-\det=\sum_{w\ne e}\frac{\chi^a(\Cp w)}{f^a}\Imm_w$, because $\Imm_e=\det$ and the coefficient of $\Imm_e$ is $\chi^a(e)/f^a-1=0$, and $\chi^a(\Cp w)\ge0$ by Haiman \cite[Lemma~1.1]{Hai}.
\end{proof}

\begin{proposition}\label{prop:psd}
If $d_\phi\ge0$ on all Hermitian positive semidefinite $n\times n$ matrices, then $\phi\in\CJ$. Hence $d_\phi\ge0$ on $\TNN_n$ if Conjecture~\ref{conj:MN} holds for $n$, and $d_\phi$ is KL-positive if Conjecture~\ref{conj:H} holds for $n$.
\end{proposition}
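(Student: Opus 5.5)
The first claim, $\phi\in\CJ$, comes from testing $d_\phi$ on the block matrices $J_\mu$. By Lemma~\ref{lem:J}(2), $\phi\in\CJ$ exactly when $d_\phi(J_\mu)\ge0$ for every $\mu\vdash n$. So it suffices to show that each $J_\mu$ is Hermitian positive semidefinite. Then the hypothesis gives $d_\phi(J_\mu)\ge0$ for all $\mu$, and hence $\phi\in\CJ$.

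The positivity of $J_\mu$ is elementary. The matrix $J_\mu$ is block diagonal, and each diagonal block is the all-ones matrix $\mathbf 1\mathbf 1^T$ of size $\mu_t$. That block is real symmetric and has rank one, with eigenvalues $\mu_t$ and $0$, so it is positive semidefinite. A direct sum of positive semidefinite blocks is positive semidefinite. Equivalently, $J_\mu=\sum_t\mathbf 1_{B_t}\mathbf 1_{B_t}^T$, where the $B_t$ are the consecutive blocks, and this is visibly a Gram matrix.

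The remaining two statements follow from the cone inclusions. If Conjecture~\ref{conj:MN} holds for $n$, then Theorem~\ref{thm:cones}(1) gives $\CJ=\CTNN$, so $\phi\in\CTNN$; that is, $d_\phi\ge0$ on $\TNN_n$. If Conjecture~\ref{conj:H} holds for $n$, then Theorem~\ref{thm:cones}(2) gives $\CJ=\CKL$, so $\phi$ is KL-positive. There is no real obstacle in this argument. The only point to check is that $J_\mu$ is simultaneously totally non-negative (Lemma~\ref{lem:J}(1)) and positive semidefinite, so that it serves as a common test matrix for both classes.
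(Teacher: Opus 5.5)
Your proposal is correct and is essentially the paper's proof: each $J_\mu$ is a direct sum of rank-one blocks $\mathbf 1\mathbf 1^T$, hence positive semidefinite, so the hypothesis gives $d_\phi(J_\mu)\ge0$ for all $\mu$, which is the definition of $\phi\in\CJ$, and Theorem~\ref{thm:cones} gives the rest. Two small remarks: membership in $\CJ$ is the definition rather than a consequence of Lemma~\ref{lem:J}(2), and the total non-negativity of $J_\mu$ is already built into Theorem~\ref{thm:cones}, so it is not an extra point you need to check here.
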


\begin{proof}
Each $J_\mu$ is a direct sum of matrices $\mathbf 1\mathbf 1^T$, so it is positive semidefinite. Apply Theorem~\ref{thm:cones}.
\end{proof}

For normalised characters, $\phi=f^b\chi^a-f^a\chi^b$, this is Stembridge's observation that, since each $J_\mu$ is positive semidefinite, an inequality $N_a\ge N_b$ on positive semidefinite matrices implies the criterion of Proposition~\ref{prop:ordercrit} \cite[\S3]{Ste92}. So every linear inequality between immanants known on positive semidefinite matrices transfers to $\TNN_n$ under Conjecture~\ref{conj:MN}, and to the Kazhdan--Lusztig level under Conjecture~\ref{conj:H}. Examples are Schur's inequality $N_a\ge\det$ \cite{Sch18}, Heyfron's hook chain \cite{Hey88}, and Pate's inequalities \cite{Pat91}, which compare $N_\lambda$ with $N_\mu$ when $\mu$ is obtained from $\lambda$ by removing its last column and adding its boxes to the bottom of the first column. Skandera asked whether Pate's inequalities hold on $\TNN_n$ \cite[Problem~29]{Ska25}; by Proposition~\ref{prop:psd} they follow from Conjecture~\ref{conj:MN}, and hence hold for $n\le10$ \cite{McN}. For every $n$, Corollary~\ref{cor:MN} gives one composite of two Pate steps at the Kazhdan--Lusztig level: $\lambda_r\to(r+1,r-1,\dots,1,1)\to\lambda_r'$, each step moving a last column of one box. For Schur's inequality and Heyfron's chain, Proposition~\ref{prop:topbottom} and Corollary~\ref{cor:skandera} give the Kazhdan--Lusztig statement for every $n$ unconditionally.

\section{Smooth and singular permutations}\label{sec:smoothsing}

\subsection{Smooth permutations}\label{sec:smooth}
A permutation is \emph{codominant} if it avoids $312$. For codominant $w$ put $m(i)=\max(w(1),\dots,w(i))$, a Hessenberg function, and let $A_w$ be the $0/1$ matrix with $(A_w)_{ij}=[j\le m(i)]$.

\begin{proposition}\label{prop:hessenberg}
For codominant $w$, $A_w\in\TNN_n$ and $\sum_{v}\prod_i(A_w)_{i,v(i)}\,v=\Cp w$. Consequently $f(\Cp w)=d_f(A_w)$ for every $f\colon S_n\to\R$, and $\Imm_v(A_w)=\delta_{v,w}$ for every $v$.
\end{proposition}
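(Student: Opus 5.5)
The plan has three steps: show $A_w$ is totally non-negative, show that its support is exactly the Bruhat interval below $w$ with all Kazhdan--Lusztig values equal to $1$, and then read off the two consequences from \eqref{eq:duality}.

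\emph{Total non-negativity.} A codominant $w$ has $m$ weakly increasing with $m(i)\ge i$. So $A_w$ is a $0/1$ matrix whose rows are initial segments $[1,m(i)]$ of weakly increasing length. I will argue by induction on $n$, or directly. For rows $r_1<\dots<r_k$ and columns $c_1<\dots<c_k$, the submatrix has entry $1$ exactly when $c_q\le m(r_p)$. This is a staircase $0/1$ matrix whose rows are nested initial segments. Subtracting each row from the next leaves a matrix with disjoint consecutive blocks of ones. The determinant is then $1$ if every row $p$ has exactly one new column and that column is $c_p$, that is, if $c_p\le m(r_p)<c_{p+1}$. Otherwise it is $0$. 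Hence every minor lies in $\{0,1\}$.

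\emph{Support.} The product $\prod_i(A_w)_{i,v(i)}$ equals $1$ iff $v(i)\le m(i)$ for all $i$, and $0$ otherwise. For codominant ($312$-avoiding) $w$ I will show two facts.
\begin{enumerate}
\item The condition $v(i)\le m(i)$ for all $i$ holds iff $v\le w$ in Bruhat order.
\item $P_{v,w}=1$ for all $v\le w$.
\end{enumerate}

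For (1) I use the tableau criterion. The condition $v\le w$ means that for every $i$ the sorted sets $v[1,i]$ and $w[1,i]$ satisfy $v[1,i]\le w[1,i]$ entrywise. For $312$-avoiding $w$, I expect $w[1,i]$ to be $[1,m(i)]$ with some gap structure forcing this criterion to reduce to the single row condition on maxima, via a dual form using the last positions. Concretely, $v\le w$ iff $\#\{j\le i: v(j)>c\}\le\#\{j\le i:w(j)>c\}$ for all $i,c$. When $w$ avoids $312$, the values of $w[1,i]$ that exceed $c$ will all lie above everything else, so the count is $\max(0,m(i)-c)$ minus the deficit. I will check that the criterion becomes exactly $v(j)\le m(j)$. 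This identification is the step I expect to be most delicate.

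For (2), codominant permutations avoid $3412$ and $4231$, since both patterns contain $312$. By Lakshmibai--Sandhya, $X_w$ is then smooth, so $P_{v,w}=1$ on $[e,w]$. Thus $\Cp w=\sum_{v\le w}v=\sum_v\prod_i(A_w)_{i,v(i)}\,v$.

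\emph{Consequences.} For any $f$, $d_f(A_w)=\sum_v f(v)\prod_i(A_w)_{i,v(i)}=f(\Cp w)$. Then \eqref{eq:duality} gives $\sum_u f(\Cp u)\Imm_u(A_w)=f(\Cp w)$ for every $f$. Since $\{\Cp u\}$ is a basis of $\Z[S_n]$ and $f$ is arbitrary, it follows that $\Imm_v(A_w)=\delta_{v,w}$, exactly as in Lemma~\ref{lem:J}(3).
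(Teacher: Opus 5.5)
Your outline is the same as the paper's: row subtraction for total non-negativity, identification of the support of $\prod_i(A_w)_{i,v(i)}$ with $[e,w]$, smoothness from $312$-avoidance via Lakshmibai--Sandhya, and duality. The gap is your step (1), the only place where $312$-avoidance enters. You announce it but do not prove it. The reason you give is also false as stated. For the codominant $w=2143$ with $i=3$, $c=1$, the value $2\in w([1,3])=\{1,2,4\}$ exceeds $c$ but lies below the missing value $3\le m(3)$. So the values of $w([1,i])$ above $c$ need not lie above everything else. What $312$-avoidance does give is that the values in positions strictly between the running maximum and $i$ lie above the missing ones; earlier values need not. The ``deficit'' is never pinned down. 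Even with an exact count on the $w$ side, you would still need a Hall-type bound on the $v$ side from the row conditions alone, and that is the real content. The paper does not prove this step either. It cites Sj\"ostrand: a $312$-avoiding $w$ avoids $4231$, $35142$, $42513$, $351624$, so $[e,w]$ is the set of rook placements on its right hull, which here is the Ferrers board $\{(i,j):j\le m(i)\}$.

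For a self-contained argument you can bypass the rank criterion. The implication $v\le w\Rightarrow v(j)\le m(j)$ holds for every $w$, since Bruhat order gives $\max v([1,j])\le\max w([1,j])$.

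For the converse, let $v\ne w$ satisfy $v(j)\le m(j)$ for all $j$, and let $j$ be the first position with $v(j)\ne w(j)$. Suppose $v(j)>w(j)$.
\begin{itemize}
\item Then $w(j)<m(j)$, so $m(j)=w(a)$ for some $a<j$, and $v(j)<w(a)=v(a)$.
\item The value $v(j)$ is not among $w(1),\dots,w(j)$, so $v(j)=w(b)$ for some $b>j$.
\item Then $w(a),w(j),w(b)$ is an occurrence of $312$, a contradiction.
\end{itemize}
Hence $v(j)<w(j)$. Let $q>j$ be the position with $v(q)=w(j)$, and put $v'=v\,(j\ q)$. Then $v'>v$, and $v'$ still satisfies the row conditions, since $v'(j)=w(j)\le m(j)$ and $v'(q)=v(j)<w(j)\le m(j)\le m(q)$.

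Iterating gives a strictly increasing chain in the finite set $\{v:v(j)\le m(j)\ \forall j\}$, and it can only stop at $w$, so $v\le w$. With this step supplied, the rest of your proposal goes through as written.
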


\begin{proof}
In a square submatrix of $A_w$, row $s$ is the indicator of an initial segment $[1,\mu_s]$ with $\mu$ non-decreasing. Subtracting consecutive rows leaves rows supported on disjoint intervals, so every minor is $0$ or $1$. Next, $\prod_i(A_w)_{i,v(i)}=[v(i)\le m(i)\ \forall i]$, which equals $[v\le w]$ for codominant $w$: a $312$-avoiding $w$ avoids the four patterns of \cite[Thm.~3.1]{Sjo}, so $[e,w]$ is the set of rook placements on its right hull, which is the Ferrers board $\{(i,j):j\le m(i)\}$ \cite[Rem.~3.4]{Sjo}. Codominant $w$ are smooth, so $P_{v,w}=1$ for all $v\le w$, and the sum is $\Cp w$. Comparing $d_f(A_w)=f(\Cp w)$ with \eqref{eq:duality} for all $f$ gives $\Imm_v(A_w)=\delta_{v,w}$.
\end{proof}

Proposition~\ref{prop:hessenberg} is essentially known: see the proof of \cite[Prop.~25]{Ska25} and the references there. For $w=w_0(\alpha)$ the matrix $A_w$ is not $J_\alpha$, since it has ones below the diagonal blocks, but both have $\prod_iM_{i,v(i)}=[v\in S_\alpha]$, so Proposition~\ref{prop:hessenberg} extends Lemma~\ref{lem:J}(3).

\begin{exa}\label{exa:hessenberg}
For the codominant $w=2341$, $m=(2,3,4,4)$, and for $w=w_0(2,2)=2143$, $m=(2,2,4,4)$:
\[
A_{2341}=\begin{pmatrix}1&1&0&0\\1&1&1&0\\1&1&1&1\\1&1&1&1\end{pmatrix},\qquad
A_{2143}=\begin{pmatrix}1&1&0&0\\1&1&0&0\\1&1&1&1\\1&1&1&1\end{pmatrix},\qquad
J_{(2,2)}=\begin{pmatrix}1&1&0&0\\1&1&0&0\\0&0&1&1\\0&0&1&1\end{pmatrix}.
\]
Each row of $A_w$ is the indicator of an initial segment, and the segments grow down the rows. One checks directly that $\Imm_v(A_{2341})=\delta_{v,2341}$ and $\Imm_v(A_{2143})=\Imm_v(J_{(2,2)})=\delta_{v,2143}$ for all $v\in S_4$.
\end{exa}

\begin{proposition}[Clearman--Hyatt--Shelton--Skandera, Abreu--Nigro, Hikita, Skandera]\label{thm:smooth}
For every smooth $w\in S_n$ and every $\lambda\vdash n$, $\phi^\lambda(\Cp w)\ge0$.
\end{proposition}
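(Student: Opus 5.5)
The plan is to reduce a smooth $w$ to a codominant $w'$ and then evaluate at the Hessenberg matrix of Proposition~\ref{prop:hessenberg}, where the coefficient becomes a coefficient of a chromatic symmetric function. The reduction uses Clearman--Hyatt--Shelton--Skandera and Abreu--Nigro \cite[Thm.~1.6]{AN}. For a smooth ($3412$- and $4231$-avoiding) $w$, they show that the class-function values $\chi^a(\Cp w)$ are determined by, and in fact equal to, those at a codominant $w'$. The mechanism is this: a smooth $w$ differs from a $312$-avoiding one by a sequence of operations, namely passing to $w^{-1}$ and multiplying by $w_0$-conjugation, together with the Abreu--Nigro modular-law or doubling identities in the Hecke algebra. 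Each of these preserves $\ch(\Cp w)=\sum_a\chi^a(\Cp w)s_a$, possibly up to a positive factor. I will cite this as a black box in the form $\Phi(\Cp w)=c\,\Phi(\Cp{w'})$ with $c>0$ and $w'$ codominant. Since each $\phi^\lambda$ is a class function, $\phi^\lambda(\Cp w)=c\,\phi^\lambda(\Cp{w'})$ for every $\lambda$, and it suffices to treat codominant $w$.

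For codominant $w$, Proposition~\ref{prop:hessenberg} gives $\Cp w=\sum_{v}[v(i)\le m(i)\ \forall i]\,v$, the sum of the rook placements on the Ferrers board of the Hessenberg function $m$. Then $\Phi(\Cp w)=\sum_\lambda\phi^\lambda(\Cp w)e_\lambda$ is, by the dual picture of Section~\ref{sec:cones}, the image under $\omega$ of $\ch(\Cp w)$. By \cite[Cor.~7.2]{CHSS}, this equals Stanley's chromatic symmetric function $X_P$ of the unit interval order $P$ attached to $m$. So $\phi^\lambda(\Cp w)$ is precisely the coefficient of $e_\lambda$ in $X_P$.

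Hikita's theorem \cite{Hik} (the Stanley--Stembridge conjecture) states that $X_P$ is $e$-positive for every unit interval order $P$. Hence $\phi^\lambda(\Cp{w'})\ge0$, and combining this with the reduction gives $\phi^\lambda(\Cp w)\ge0$ for every smooth $w$. This is Skandera's observation \cite[Prop.~25]{Ska25}.

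The main obstacle is the reduction step. I must make sure the cited reduction preserves the full class-function data $\chi^a(\Cp w)$, and not merely values on some subset, and that any scalar factors it introduces are positive. I would check this first against \cite[Thm.~1.6]{AN}, which is phrased at the level of the characteristic map $\ch$ and so covers every class function at once. The remaining identification with $X_P$ is then a matter of matching conventions: $\omega$, the indexing of $P$ by $m$, and the fact that the transpose or inverse involutions fix $\ch(\Cp w)$.
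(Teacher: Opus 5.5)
Your proposal is correct and follows the paper's proof: reduce a smooth $w$ to a codominant $w'$ with the same character values via \cite[Thm.~1.6]{AN} (or \cite[Thm.~4.6]{CHSS}), identify $\sum_\lambda\phi^\lambda(\Cp{w'})e_\lambda$ with the chromatic symmetric function $X_P$ of a unit interval order via \cite[Cor.~7.2]{CHSS}, and conclude by Hikita's $e$-positivity theorem \cite{Hik}. The cited reduction gives equality of all character values, so your scalar $c$ is simply $1$, and the detour through Proposition~\ref{prop:hessenberg} is not needed.
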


This is a direct combination of known results, which we record for later use. For codominant $w$ it is \cite[Prop.~25]{Ska25}, where Skandera deduces it from Hikita's theorem.

\begin{proof}
For smooth $w$, $\Cp w$ and $\Cp{w'}$ have the same character values for some codominant $w'$ (Abreu--Nigro \cite[Thm.~1.6]{AN}; see also Clearman--Hyatt--Shelton--Skandera \cite[Thm.~4.6]{CHSS}). By \cite[Cor.~7.2]{CHSS} there is a unit interval order $P$ with $X_P=\sum_\lambda\phi^\lambda(\Cp w)\,e_\lambda$, where $X_P$ is Stanley's chromatic symmetric function of the incomparability graph of $P$. These coefficients are non-negative by Hikita's proof of the Stanley--Stembridge conjecture \cite{Hik}.
\end{proof}

Proposition~\ref{thm:smooth}, and through it Theorem~\ref{thm:clean}, the reduction for the shapes $(3,2^b,1^c)$ in Section~\ref{sec:twocol} and the smooth case of Problem~\ref{prob:theta}, depend on Hikita's theorem \cite{Hik}. Theorem~\ref{thm:main} does not use it. For hooks, Hikita's theorem is not needed: $f^{(a)}_i(\Cp w)\ge0$ for every $w$, smooth or not, by Theorem~\ref{thm:main}. More generally, Skandera \cite[Thm.~5.5]{Ska08} showed that every immanant $d_f$ that is non-negative on $\TNN_n$ has non-negative coefficients at all smooth $w$. So Conjecture~\ref{conj:MN} for a shape $\lambda$ implies $c_{\lambda,w}\ge0$ at every smooth $w$; Proposition~\ref{thm:smooth} proves this unconditionally.

\subsection{Singular permutations: clean steps}\label{sec:clean}
Here $C'_w$ is the Kazhdan--Lusztig basis of the Hecke algebra $H_n(q)$, and $\ch(h)=\sum_\lambda\chi^\lambda_q(h)\,s_\lambda$ for $h\in H_n(q)$, where $\chi^\lambda_q$ are the irreducible characters of $H_n(q)$. At $q=1$, $\ch(C'_w)$ determines the vector $c_{\cdot,w}=(\phi^\lambda(\Cp w))_\lambda$ and conversely. Let $s=s_l=(l\ l{+}1)$, so that $ws$ exchanges the entries in positions $l,l+1$. Suppose $sw<w<ws$. The Kazhdan--Lusztig multiplication rule gives $C'_wC'_s=C'_{ws}+\sum_{z\in M(w,s)}\mu(z,w)C'_z$ and $C'_sC'_w=(q^{1/2}+q^{-1/2})C'_w$, where
\[
M(w,s)=\{z<w:\ zs<z,\ \mu(z,w)\ne0\}.
\]
Since characters are traces, Abreu and Nigro \cite[(3a)]{AN} obtain
\begin{equation}\label{eq:AN}
(q^{1/2}+q^{-1/2})\,\ch(C'_w)=\ch(C'_{ws})+\sum_{z\in M(w,s)}\mu(z,w)\,\ch(C'_z).
\end{equation}

\begin{definition}\label{def:clean}
The step $w\to ws$ is \emph{clean} if $M(w,s)=\emptyset$. Left steps $w\to sw$, with $ws<w<sw$, are defined in the same way.
\end{definition}

Each $\ch(C'_z)$ is a non-zero Schur-positive symmetric function \cite[Lemma~1.1]{Hai}, so at $q=1$ the step is clean iff $c_{\cdot,ws}=2c_{\cdot,w}$.

\begin{theorem}\label{thm:clean}
Let $y=y_0>y_1>\dots>y_k$ with each $y_{i-1}$ obtained from $y_i$ by a clean step, and $y_k$ smooth. Then $\ch(q^{\ell(y)/2}C'_y)=(1+q)^k\ch(q^{\ell(w')/2}C'_{w'})$ for a codominant $w'$, and $\phi^\lambda(\Cp y)=2^k\phi^\lambda(\Cp{w'})\ge0$ for every $\lambda\vdash n$.
\end{theorem}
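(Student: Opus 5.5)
The plan is an induction on $k$, with each clean step handled by the Abreu--Nigro identity \eqref{eq:AN}.

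\emph{Base case.} For $k=0$, $y=y_k$ is smooth. By Abreu--Nigro \cite[Thm.~1.6]{AN}, as in the proof of Proposition~\ref{thm:smooth}, there is a codominant $w'$ with $\ch(q^{\ell(y)/2}C'_y)=\ch(q^{\ell(w')/2}C'_{w'})$. I would check that \cite{AN} states this at generic $q$ with exactly this normalisation; if it is stated only at $q=1$, I would instead use their $q$-version, the same Hessenberg/chromatic quasisymmetric identification.

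\emph{Inductive step.} Suppose $y_{i-1}=y_is$ with $sy_i<y_i<y_is$ and $M(y_i,s)=\emptyset$. Then \eqref{eq:AN} reduces to
\[
\ch(C'_{y_{i-1}})=(q^{1/2}+q^{-1/2})\,\ch(C'_{y_i}).
\]
Since $\ell(y_{i-1})=\ell(y_i)+1$, multiplying by $q^{\ell(y_{i-1})/2}$ gives
\[
\ch(q^{\ell(y_{i-1})/2}C'_{y_{i-1}})=(1+q)\,\ch(q^{\ell(y_i)/2}C'_{y_i}).
\]
Left steps are the same computation with the left multiplication rule $C'_sC'_w=C'_{sw}+\sum\mu(z,w)C'_z$ and $C'_wC'_s=(q^{1/2}+q^{-1/2})C'_w$. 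The character identity needs only the trace property, so it carries over. Iterating from $y_k$ up to $y_0$ gives the factor $(1+q)^k$.

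\emph{Specialisation.} At $q=1$, $\ch(C'_y)=2^k\ch(C'_{w'})$, so $\chi^\mu(\Cp y)=2^k\chi^\mu(\Cp{w'})$ for every $\mu$. Each $\phi^\lambda$ is an integer combination of the $\chi^\mu$ (inverse Kostka matrix), so by linearity $\phi^\lambda(\Cp y)=2^k\phi^\lambda(\Cp{w'})$. Non-negativity then follows from Proposition~\ref{thm:smooth} applied to $w'$, or directly from Hikita through \cite[Prop.~25]{Ska25}.

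\emph{Main obstacle.} The only delicate point is the base case at generic $q$, namely confirming that the reduction of smooth to codominant in \cite{AN}/\cite{CHSS} holds for the Hecke characters with the $q^{\ell/2}$ normalisation. The rest is a formal telescoping.
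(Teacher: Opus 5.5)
Your proposal is correct and is essentially the paper's proof. The paper also inducts on $k$, uses \eqref{eq:AN} with $M(w,s)=\emptyset$ to get the factor $1+q$ after multiplying by $q^{\ell(ws)/2}$, takes the base case $k=0$ from \cite[Thm.~1.6]{AN} at generic $q$, and gets non-negativity at $q=1$ from Proposition~\ref{thm:smooth}. The normalisation point you flag is harmless: an equality $\ch(C'_y)=\ch(C'_{w'})$ already forces $\ell(y)=\ell(w')$, because the one-dimensional character with $T_s\mapsto q$ takes the value $q^{-\ell(w)/2}\sum_{v\le w}P_{v,w}(q)\,q^{\ell(v)}$ on $C'_w$, and its top term is $q^{\ell(w)/2}$.
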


\begin{proof}
For a clean step, \eqref{eq:AN} reads $\ch(q^{\ell(ws)/2}C'_{ws})=(1+q)\ch(q^{\ell(w)/2}C'_w)$. Induct on $k$; the case $k=0$ is \cite[Thm.~1.6]{AN} and Proposition~\ref{thm:smooth}.
\end{proof}

The case $k=1$ is \cite[Thm.~1.7]{AN}. Since a clean step gives $c_{\cdot,ws}=2c_{\cdot,w}$ also when $w$ is singular, Conjecture~\ref{conj:H} holds for $n$ iff $c_{\lambda,w}\ge0$ for every singular $w$ that is not the top of a clean step. This proves the second statement of Theorem~\ref{thm:smoothclean}. The rest of this section gives a local criterion for clean steps.

\begin{definition}\label{def:badcover}
Let $w(l)<w(l+1)$. A \emph{bad cover} of $w$ at $l$ is a permutation $z$ covered by $w$ in Bruhat order with $z(l)>z(l+1)$.
\end{definition}

Write $a=w(l)<b=w(l+1)$. A cover $z=w\,(i\ j)$ can be bad only if $i=l+1<j$, $w(j)<a$ and no $k\in(l+1,j)$ has $w(j)<w(k)<b$; or $i<l=j$, $w(i)>b$ and no $k\in(i,l)$ has $a<w(k)<w(i)$. Covers have $\mu=1$, so a clean step has no bad cover.

\begin{exa}\label{exa:clean}
List $c_{\cdot,w}$ in the order $\lambda=(4),(3,1),(2,2),(2,1,1),(1^4)$. For the smooth $w=3142$ and $l=2$, $sw=2143<w<ws=3412$, and $M(w,s)=\emptyset$: the step is clean, and indeed $c_{\cdot,3142}=(4,2,2,0,0)$ and $c_{\cdot,3412}=(8,4,4,0,0)$. For the smooth $w=2341$ and $l=1$, $sw=1342<w<ws=3241$, and $z=2143$ is a bad cover: it is covered by $w$ and $z(1)>z(2)$. Here $M(w,s)=\{2143\}$, $c_{\cdot,2341}=(4,2,2,0,0)$, $c_{\cdot,3241}=(8,4,0,0,0)$ and $c_{\cdot,2143}=(0,0,4,0,0)$, so \eqref{eq:AN} at $q=1$ reads $2(4,2,2,0,0)=(8,4,0,0,0)+(0,0,4,0,0)$ and the step is not clean. In line with \cite[Prop.~3.1]{AN}, the top $3412$ of the clean step is singular, and the top $3241$ of the other is smooth.
\end{exa}

\begin{proposition}\label{prop:4crown}
Let $z<w$ with $\ell(w)-\ell(z)=3$ and $\mu(z,w)\ne0$. Then $w$ and $z$ differ in exactly four positions $p_1<p_2<p_3<p_4$, and either $w$ restricted to them has pattern $3412$ and $z$ has pattern $1324$, or $w$ has pattern $4231$ and $z$ has pattern $2143$. The coatoms of $[z,w]$ are $w\,(p_1\ p_3)$, $w\,(p_2\ p_3)$, $w\,(p_2\ p_4)$, $w\,(p_1\ p_4)$ in the first case, and $w\,(p_3\ p_4)$, $w\,(p_2\ p_4)$, $w\,(p_1\ p_3)$, $w\,(p_1\ p_2)$ in the second. All of them are covered by $w$.
\end{proposition}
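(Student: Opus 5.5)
Since $\ell(w)-\ell(z)=3$ is odd and $\mu(z,w)\ne0$, the coefficient of $q^{1}$ in $P_{z,w}$ is non-zero; the interval $[z,w]$ then has length $3$ and $P_{z,w}\ne1$. The plan is to reduce to $S_4$ by locating the positions where $z$ and $w$ differ, and then read off the local structure.

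First, I bound the number of positions where $z$ and $w$ differ. The coefficient of $q$ in $P_{z,w}$ can be read off the interval: by a standard consequence of the Kazhdan--Lusztig recursion (Dyer, or Björner--Brenti), it equals the number of coatoms of $[z,w]$ minus $\ell(w)-\ell(z)$. So $\mu(z,w)\ne0$ forces $[z,w]$ to have more than $3$ coatoms, that is, $[z,w]$ is not a Boolean lattice.

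Next, write $z=w\,t_1t_2t_3$ along a maximal chain of transpositions. The positions moved lie in a set $D$ with $|D|\le6$. For the intersection cohomology to be non-trivial at the point $z$, the pattern of $w$ on $D$ must be singular. By Billey--Braden pattern maps (Theorem~\ref{thm:R2}, equation~\eqref{eq:BB}, with the coset of the stabiliser of the values outside $D$), $P_{z,w}$ dominates $P_{\varphi(z),\varphi(w)}$ after flattening, and conversely the length-$3$ condition pins it down.

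The cleaner route is a reduction lemma. If $z$ and $w$ agree at a position $p$, then the pattern map deleting $p$ gives $P_{z,w}=P_{\varphi(z),\varphi(w)}$ whenever $\ell(w)-\ell(z)=\ell(\varphi w)-\ell(\varphi z)$. The lengths can only drop under this deletion, so I would prove that a drop forces $P_{z,w}(q)$ to have zero $q$-coefficient by a degree count $\deg P\le(\ell(w)-\ell(z)-1)/2$ applied to the smaller interval. Iterating, $w,z$ reduce to permutations differing in every position. With length difference $3$ and every position moved, a short enumeration shows there are at most $4$ positions. In $S_2$ and $S_3$ all KL polynomials are $1$, so there are exactly four positions, and the pair lies in $S_4$. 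There the only $\mu(z,w)\ne0$ with length difference $3$ are $(3412,1324)$ and $(4231,2143)$, with $P=1+q$; this is a finite check. I expect this reduction step, keeping control of lengths under deletion, to be the main obstacle. If it resists, I would instead use the fact that $[z,w]$ non-Boolean of length $3$ has at least four coatoms, and classify the Bruhat intervals of length $3$ in $S_n$ directly.

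Finally, the coatoms are computed in $S_4$. For $3412$ on $p_1<\dots<p_4$, the four elements of length $3$ above $1324$ are $3142=w(p_2\,p_3)$, $1432=w(p_1\,p_3)$, $3214=w(p_2\,p_4)$ and $2413=w(p_1\,p_4)$. For $4231$ above $2143$, they are $2431$, $4213$, $3241$ and $4132$, which are $w(p_1\,p_2)$, $w(p_3\,p_4)$, $w(p_1\,p_3)$ and $w(p_2\,p_4)$. Each has length $\ell(w)-1$, so each is covered by $w$. These covers lift to $S_n$ because, after the reduction, the fixed positions contribute equally to the lengths.
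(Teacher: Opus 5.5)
Your route (delete common points of $z$ and $w$ until the pair lives in a small symmetric group, then read off $S_4$) differs from the paper's. The paper gets everything at once from the classification of the codimension-$3$ components of the singular locus of $X_w$: $X_z$ is singular of codimension $3$, hence is such a component. As written, though, your reduction has a gap exactly where the content of the proposition lies.

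\textbf{The drop case.} Suppose deleting a common point $(p,c)$ lowers the length difference from $3$ to $1$. A degree bound on $[\varphi(z),\varphi(w)]$ then says nothing about $P_{z,w}$. Billey--Braden, as in \eqref{eq:BB}, only bound $P_{z,w}$ from below, and the other summands are non-negative and uncontrolled. What does work is this. After a drop, $\varphi(z)\lessdot\varphi(w)$, so $w=z\,(i\ j)$ for a single transposition. In the notation of Lemma~\ref{lem:smoothswap}, $\ell(w)-\ell(z)=2|R|+1=3$ forces $|R|=1$, and hence $P_{z,w}=1$.

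\textbf{The no-drop case.} Your equality $P_{z,w}=P_{\varphi(z),\varphi(w)}$ is only asserted. It does follow: when the length difference is preserved, the rank-function criterion forces $y(p)=c$ for every $y\in[z,w]$. So the interval and its Bruhat graph are identified with those of $[\varphi(z),\varphi(w)]$, and Dyer's formula gives the same $R$-polynomials. You need this same fact to conclude that the four listed elements are \emph{all} the coatoms of $[z,w]$, not just that they lift to coatoms.

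\textbf{Two auxiliary claims are false as stated.}
\begin{enumerate}
\item The $q$-coefficient of $P_{z,w}$ is not the number of coatoms minus $\ell(w)-\ell(z)$. For $[123,321]$ that formula gives $-1$. For a length-$3$ interval, which is a $k$-crown, the correct value is $k-3+[z^{-1}w\text{ is a transposition}]$.
\item Length alone does not cap the number of moved positions at $4$. Both $w=214365$ and $w=21453$ differ from $z=e$ in every position, with $\ell(w)-\ell(z)=3$. So your last step must use $\mu(z,w)\ne0$ to exclude $5$ and $6$ moved positions. That is a finite Kazhdan--Lusztig computation in $S_5$ and $S_6$, not a length count.
\end{enumerate}

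With these repairs your approach is a valid, self-contained combinatorial alternative to the geometric classification the paper cites. The price is that finite computation.
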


\begin{proof}
Since $P_{z,w}\ne1$, $X_w$ is singular at the point $z$. Every irreducible component of the singular locus of $X_w$ has codimension at least $3$, and the components of codimension exactly $3$ are the $X_z$ given by the two configurations above, with the regions they span empty \cite{BW03,Man01,KLR03,Cor03}. Because $\ell(w)-\ell(z)=3$, $X_z$ is such a component. The interval $[z,w]$ consists of the permutations that agree with $w$ outside the four positions and whose restriction lies in $[1324,3412]$, respectively $[2143,4231]$, of $S_4$, which gives the coatoms.
\end{proof}

\begin{figure}[ht]
\centering
\begin{tikzpicture}[scale=0.42,
  blk/.style={circle,fill=black,inner sep=1.5pt},
  wht/.style={circle,draw=black,fill=white,inner sep=1.5pt}]
  \begin{scope}[xshift=0cm]
    \fill[black!15] (1,1) rectangle (3,3);
    \fill[black!15] (1,3) rectangle (3,5);
    \fill[black!15] (3,1) rectangle (5,3);
    \fill[black!15] (3,3) rectangle (5,5);
    \fill[black!15] (3,5) rectangle (5,7);
    \fill[black!15] (5,3) rectangle (7,5);
    \fill[black!15] (5,5) rectangle (7,7);
    \draw[black!40] (0.3,1) -- (7.7,1) (0.3,3) -- (7.7,3) (0.3,5) -- (7.7,5) (0.3,7) -- (7.7,7);
    \draw[black!40] (1,0.3) -- (1,7.7) (3,0.3) -- (3,7.7) (5,0.3) -- (5,7.7) (7,0.3) -- (7,7.7);
    \node[blk] at (1,5) {};
    \node[wht] at (1,1) {};
    \node[font=\scriptsize] at (1,-0.1) {$p_1$};
    \node[font=\scriptsize] at (-0.2,1) {$v_1$};
    \node[blk] at (3,7) {};
    \node[wht] at (3,5) {};
    \node[font=\scriptsize] at (3,-0.1) {$p_2$};
    \node[font=\scriptsize] at (-0.2,3) {$v_2$};
    \node[blk] at (5,1) {};
    \node[wht] at (5,3) {};
    \node[font=\scriptsize] at (5,-0.1) {$p_3$};
    \node[font=\scriptsize] at (-0.2,5) {$v_3$};
    \node[blk] at (7,3) {};
    \node[wht] at (7,7) {};
    \node[font=\scriptsize] at (7,-0.1) {$p_4$};
    \node[font=\scriptsize] at (-0.2,7) {$v_4$};
    \node[fill=black,rectangle,inner sep=1.6pt] at (5.6,2) {};
    \node[font=\scriptsize,right] at (5.6,2) {$c$};
    \node[font=\small] at (4,-1.1) {$w\sim3412$, $z\sim1324$};
  \end{scope}
  \begin{scope}[xshift=11cm]
    \fill[black!15] (1,3) rectangle (3,5);
    \fill[black!15] (1,5) rectangle (3,7);
    \fill[black!15] (3,1) rectangle (5,3);
    \fill[black!15] (3,3) rectangle (5,5);
    \fill[black!15] (3,5) rectangle (5,7);
    \fill[black!15] (5,1) rectangle (7,3);
    \fill[black!15] (5,3) rectangle (7,5);
    \draw[black!40] (0.3,1) -- (7.7,1) (0.3,3) -- (7.7,3) (0.3,5) -- (7.7,5) (0.3,7) -- (7.7,7);
    \draw[black!40] (1,0.3) -- (1,7.7) (3,0.3) -- (3,7.7) (5,0.3) -- (5,7.7) (7,0.3) -- (7,7.7);
    \node[blk] at (1,7) {};
    \node[wht] at (1,3) {};
    \node[font=\scriptsize] at (1,-0.1) {$p_1$};
    \node[font=\scriptsize] at (-0.2,1) {$v_1$};
    \node[blk] at (3,3) {};
    \node[wht] at (3,1) {};
    \node[font=\scriptsize] at (3,-0.1) {$p_2$};
    \node[font=\scriptsize] at (-0.2,3) {$v_2$};
    \node[blk] at (5,5) {};
    \node[wht] at (5,7) {};
    \node[font=\scriptsize] at (5,-0.1) {$p_3$};
    \node[font=\scriptsize] at (-0.2,5) {$v_3$};
    \node[blk] at (7,1) {};
    \node[wht] at (7,5) {};
    \node[font=\scriptsize] at (7,-0.1) {$p_4$};
    \node[font=\scriptsize] at (-0.2,7) {$v_4$};
    \node[fill=black,rectangle,inner sep=1.6pt] at (5.6,6) {};
    \node[font=\scriptsize,right] at (5.6,6) {$c$};
    \node[font=\small] at (4,-1.1) {$w\sim4231$, $z\sim2143$};
  \end{scope}
\end{tikzpicture}
\caption{The two configurations of Proposition~\ref{prop:4crown}. A point $(p,w(p))$ is drawn in column $p$ and row $w(p)$; black points are $w$ at the positions $p_1<\dots<p_4$, and hollow points are $z$. In every pair $z<w$ with $\ell(w)-\ell(z)=3$ and $\mu(z,w)\ne0$ for $n\le6$ ($1$, $18$ and $243$ pairs of each type for $n=4,5,6$), no other point of $w$ lies in a shaded cell; the white cells may contain points. The square marks a value $c$ at position $l+1=p_3+1$ in a white cell, the configuration of the first row of each block of the table in Lemma~\ref{lem:nolongmu}. On the left $v_1<c<v_2$, and $y=w\,(p_2\ p_3)$ is a bad cover with $y(l)=v_4>c$. On the right $v_3<c<v_4$, and $y=w\,(p_1\ p_3)$ is a bad cover with $y(l)=v_4>c$.}\label{fig:config}
\end{figure}

\begin{lemma}\label{lem:nolongmu}
Let $w(l)<w(l+1)$, and let $z<w$ with $\ell(w)-\ell(z)=3$, $\mu(z,w)\ne0$ and $z(l)>z(l+1)$. Then $w$ has a bad cover at $l$.
\end{lemma}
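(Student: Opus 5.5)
The plan is to combine Proposition~\ref{prop:4crown} with a finite case analysis. By that proposition, $w$ and $z$ differ in exactly four positions $p_1<p_2<p_3<p_4$, in one of two configurations:
\begin{itemize}
\item $w$ restricted to these positions is $(v_3,v_4,v_1,v_2)$ and $z$ is $(v_1,v_3,v_2,v_4)$, the $3412/1324$ case;
\item $w$ is $(v_4,v_2,v_3,v_1)$ and $z$ is $(v_2,v_1,v_4,v_3)$, the $4231/2143$ case.
\end{itemize}
Here $v_1<v_2<v_3<v_4$ are the values involved. The four coatoms listed there are covered by $w$. It therefore suffices to find, among these coatoms $y$, one with $y(l)>y(l+1)$; it is then a bad cover of $w$ at $l$ by Definition~\ref{def:badcover}.

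First I locate $l,l+1$ relative to the $p_i$. If neither is a $p_i$, then $z$ agrees with $w$ at both positions, which contradicts $w(l)<w(l+1)$ and $z(l)>z(l+1)$. If both are, then $\{l,l+1\}=\{p_i,p_{i+1}\}$, and the patterns exclude this. In the first configuration $w$ has a descent at $(p_2,p_3)$, and $z$ has ascents at $(p_1,p_2)$ and $(p_3,p_4)$. In the second, $w$ has descents at $(p_1,p_2)$ and $(p_3,p_4)$, and $z$ has an ascent at $(p_2,p_3)$. So exactly one of $l,l+1$ equals some $p_i$, and the other position carries a common value $c$ of $w$ and $z$.

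If $l=p_i$, the hypotheses say $w(p_i)<c<z(p_i)$. If $l+1=p_i$, they say $z(p_i)<c<w(p_i)$. Reading off the table of values, only a few $i$ are possible in each configuration. For each I choose a coatom whose value at $p_i$ lies on the correct side of $c$.

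In the $3412$ case:
\begin{itemize}
\item For $l=p_3$ and $v_1<c<v_2$, take $y=w\,(p_2\ p_3)$, which has $y(p_3)=v_4>c$.
\item For $l=p_4$ and $v_2<c<v_4$, take $y=w\,(p_2\ p_4)$, which has $y(p_4)=v_4>c$.
\item For $l+1=p_1$ and $v_1<c<v_3$, take $y=w\,(p_1\ p_3)$, which has $y(p_1)=v_1<c$.
\item For $l+1=p_2$ and $v_3<c<v_4$, take $y=w\,(p_2\ p_3)$, which has $y(p_2)=v_1<c$.
\end{itemize}
The $4231$ case is handled the same way, using the coatoms $w\,(p_3\ p_4)$, $w\,(p_2\ p_4)$, $w\,(p_1\ p_3)$, $w\,(p_1\ p_2)$. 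For example, if $l=p_3$ and $v_3<c<v_4$, then $y=w\,(p_1\ p_3)$ has $y(p_3)=v_4>c$, as in Figure~\ref{fig:config}. Alternatively, I can reduce it to the first case by the symmetries $w\mapsto w_0ww_0$ and $w\mapsto w^{-1}$ where these preserve the setup.

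I expect no conceptual obstacle once Proposition~\ref{prop:4crown} is available. The main point of care is to make the case list exhaustive and correct, checking for each admissible $i$ that a suitable coatom exists among the four. In every case the chosen $y$ agrees with $w$ at the position that is not a $p_i$, so $y(l)>y(l+1)$ follows from the chosen inequality against $c$. No emptiness condition on the shaded regions is needed.
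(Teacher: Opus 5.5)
Your proposal is correct and is essentially the paper's proof. It uses Proposition~\ref{prop:4crown}, observes that exactly one of $l,l+1$ lies in $\{p_1,\dots,p_4\}$ with a common value $c$ at the other, and chooses the same coatom in each admissible case; your four $3412$ rows coincide with the paper's table. The three $4231$ rows you left implicit also check out: $l=p_4$, $v_1<c<v_3$ with $w\,(p_3\ p_4)$; $l+1=p_1$, $v_2<c<v_4$ with $w\,(p_1\ p_2)$; and $l+1=p_2$, $v_1<c<v_2$ with $w\,(p_2\ p_4)$. So the symmetry shortcut is unnecessary, and it could not trade $4231$ for $3412$ anyway, since both patterns are fixed by inversion and by conjugation by $w_0$.
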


\begin{proof}
Use the notation of Proposition~\ref{prop:4crown} and Figure~\ref{fig:config}, let $v_1<v_2<v_3<v_4$ be the values in positions $D=\{p_1,\dots,p_4\}$, and let $c$ be the common value of $w$ and $z$ at the position of $\{l,l+1\}$ outside $D$, if there is one. Since the order of the entries in positions $l,l+1$ is reversed between $w$ and $z$, $\{l,l+1\}\cap D\ne\emptyset$. If $l,l+1\in D$, they are consecutive in $D$, but in neither configuration does a consecutive pair increase in $w$ and decrease in $z$. In the remaining cases, $y$ is a coatom from Proposition~\ref{prop:4crown}, hence covered by $w$, with $y(l)>y(l+1)$:
\begin{center}
\begin{tabular}{lll}
\toprule
configuration & position & bad cover \\
\midrule
$3412\to1324$ & $l=p_3$, $v_1<c<v_2$ & $w\,(p_2\ p_3)$, \ $y(l)=v_4$\\
 & $l=p_4$, $v_2<c<v_4$ & $w\,(p_2\ p_4)$, \ $y(l)=v_4$\\
 & $l+1=p_1$, $v_1<c<v_3$ & $w\,(p_1\ p_3)$, \ $y(l+1)=v_1$\\
 & $l+1=p_2$, $v_3<c<v_4$ & $w\,(p_2\ p_3)$, \ $y(l+1)=v_1$\\
$4231\to2143$ & $l=p_3$, $v_3<c<v_4$ & $w\,(p_1\ p_3)$, \ $y(l)=v_4$\\
 & $l=p_4$, $v_1<c<v_3$ & $w\,(p_3\ p_4)$, \ $y(l)=v_3$\\
 & $l+1=p_1$, $v_2<c<v_4$ & $w\,(p_1\ p_2)$, \ $y(l+1)=v_2$\\
 & $l+1=p_2$, $v_1<c<v_2$ & $w\,(p_2\ p_4)$, \ $y(l+1)=v_1$\\
\bottomrule
\end{tabular}
\end{center}
The other positions of $l$ or $l+1$ in $D$ are excluded because there $w(l)<w(l+1)$ and $z(l)>z(l+1)$ cannot both hold.
\end{proof}

\begin{proposition}\label{prop:cleancrit}
Let $sw<w<ws$ with $s=s_l$, and $\deg P_{e,w}\le1$. Then the step $w\to ws$ is clean iff $w$ has no bad cover at $l$.
\end{proposition}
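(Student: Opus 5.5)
The plan is to prove both directions by describing the set $M(w,s)=\{z<w:\ zs<z,\ \mu(z,w)\ne0\}$ under the hypothesis $\deg P_{e,w}\le1$. One direction is already recorded after Definition~\ref{def:badcover}. A bad cover $z$ at $l$ is covered by $w$, so $\mu(z,w)=1$, and it satisfies $z(l)>z(l+1)$, which means $zs<z$. Hence $z\in M(w,s)$, and a clean step has no bad cover. This direction does not need the degree hypothesis.

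For the converse, assume $w$ has no bad cover at $l$ and let $z\in M(w,s)$; we aim for a contradiction. Since $\mu(z,w)\ne0$, the length $\ell(w)-\ell(z)$ is odd and $\mu(z,w)$ is the coefficient of $q^{(\ell(w)-\ell(z)-1)/2}$ in $P_{z,w}$. We have $\deg P_{z,w}\le\deg P_{e,w}$, because the coefficients of $P_{x,w}$ weakly decrease as $x$ goes up in Bruhat order (Braden--MacPherson monotonicity, valid for Schubert varieties). So $\deg P_{z,w}\le 1$. The degree bound $\deg P_{z,w}\le(\ell(w)-\ell(z)-1)/2$, with equality when $\mu\ne0$, then forces $\ell(w)-\ell(z)\in\{1,3\}$.
\begin{itemize}
\item If $\ell(w)-\ell(z)=1$, then $z$ is covered by $w$ and $z(l)>z(l+1)$, so $z$ is a bad cover, which is excluded.
\item If $\ell(w)-\ell(z)=3$, then $\mu(z,w)\ne0$ and $z(l)>z(l+1)$, so Lemma~\ref{lem:nolongmu} produces a bad cover of $w$ at $l$, again a contradiction.
\end{itemize}
Therefore $M(w,s)=\emptyset$ and the step is clean.

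The main obstacle is justifying $\deg P_{z,w}\le\deg P_{e,w}$. I would cite the monotonicity theorem of Braden and MacPherson (Irving's for the coefficientwise form), which gives $P_{x,w}-P_{y,w}\in\N[q]$ for $x\le y\le w$; taking $x=e$ and $y=z$ gives the degree bound. With that in hand, the case split above together with Lemma~\ref{lem:nolongmu} and Proposition~\ref{prop:4crown} completes the argument. The condition $sw<w$ is used only implicitly: it is the setting in which \eqref{eq:AN} and the definition of $M(w,s)$ apply.
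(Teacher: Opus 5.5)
Your proposal is correct and is essentially the paper's proof. In both, the forward direction is the observation that a bad cover lies in $M(w,s)$. The converse uses monotonicity of Kazhdan--Lusztig polynomials to get $\deg P_{z,w}\le\deg P_{e,w}\le1$, so every $z\in M(w,s)$ lies at distance $1$, where it is itself a bad cover, or at distance $3$, where Lemma~\ref{lem:nolongmu} produces one.
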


\begin{proof}
A clean step has no bad cover. Conversely, by monotonicity of Kazhdan--Lusztig polynomials \cite[Cor.~3.7]{BM01}, $P_{e,w}-P_{z,w}$ has non-negative coefficients for $z\le w$, so $\deg P_{z,w}\le1$. Hence $\mu(z,w)\ne0$ forces $\ell(w)-\ell(z)\in\{1,3\}$. A $z\in M(w,s)$ at distance $1$ is a bad cover, and one at distance $3$ gives a bad cover by Lemma~\ref{lem:nolongmu}.
\end{proof}

For smooth $w$, Proposition~\ref{prop:cleancrit} is contained in \cite[Prop.~3.1]{AN}. The degree condition cannot simply be dropped: for $w=256413$ and $l=1$ there is no bad cover, but $z=215436$ has $\ell(w)-\ell(z)=5$ and $\mu(z,w)=1$, so the step is not clean. Starting from smooth permutations, the steps certified by Proposition~\ref{prop:cleancrit} reach $84\%$, $68\%$, $49\%$, $32\%$ and $19\%$ of the singular permutations for $n=5,\dots,9$ \cite{McN}. So the criterion handles a large but shrinking share of the singular permutations, and the singular $w$ that are not the top of a clean step are where Conjecture~\ref{conj:H} remains open.

\section{Two further families of shapes}\label{sec:twocol}

\begin{lemma}\label{lem:fangreen}
If $\lambda_1\le2$ and $u$ contains $321$, then $\phi^\lambda(\Cp u)=0$.
\end{lemma}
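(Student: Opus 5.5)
The plan is to show that $\phi^\lambda$ vanishes on the two-sided ideal of $\Z[S_n]$ spanned by $\{\Cp u: u \text{ contains } 321\}$. This is the kernel of the surjection $\Z[S_n]\to TL_n(2)$ onto the Temperley--Lieb algebra at $q=1$.

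The standard fact we use is due to Fan and Green and to Losonczy. The map $\theta\colon\Z[S_n]\to TL_n$, $s_i\mapsto t_i-1$ (or $1+t_i$, depending on normalisation), sends $\Cp u$ to the Temperley--Lieb basis element $t_u$ when $u$ is $321$-avoiding, and to $0$ when $u$ contains $321$. This is the reason Rhoades--Skandera can write $\Imm_\tau=\Imm_w$ \cite[Prop.~3.2]{RS06}.

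First step: show that $\phi^\lambda$ with $\lambda_1\le2$ factors through $\theta$. The cleanest route is through characters. The kernel of $\theta$ is the two-sided ideal generated by $\sum_{\sigma\in S_3}\sigma$ on three consecutive letters, i.e.\ the isotypic components $\chi^\mu$ with $\ell(\mu')\ge\ldots$. More precisely, an irreducible $\chi^\mu$ kills this ideal iff $\mu$ has at most two \emph{columns}, under the convention where the symmetriser maps to $0$. I will check the sign convention against the example $\Imm_{(2,1)}$ in Corollary~\ref{cor:a2}: there the coefficients at $w_0=321$ vanish, consistent with the statement. So a class function factors through $\theta$ iff it is a combination of $\chi^\mu$ with $\mu_1\le2$. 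Here $\mu_1\le 2$ is the condition that the trivial representation of $S_3$ does not occur on restriction, i.e.\ that the $S_3$-symmetriser acts by zero.

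Second step: express $\phi^\lambda$ in this span. Since $\phi^\lambda$ is dual to $\eta^\mu$, we have $\phi^\lambda=\sum_\mu (K^{-1})_{\lambda\mu}\chi^\mu$ with the inverse Kostka matrix. The inverse Kostka matrix is upper unitriangular in dominance, with $(K^{-1})_{\lambda\mu}\neq 0$ only when $\mu\ge\lambda$. That is the wrong direction, since dominating shapes can have $\mu_1>2$. I therefore swap to the transposed formulation. Written via $m_\lambda=\sum_\mu (K^{-1})_{\mu\lambda}s_\mu$, the support is $\mu\le\lambda$ in dominance, which is the right direction. Then $\mu\trianglelefteq\lambda$ and $\lambda_1\le2$ give $\mu_1\le\lambda_1\le2$, so $\phi^\lambda$ is a combination of $\chi^\mu$ with $\mu_1\le2$.

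Third step: conclude. Each such $\chi^\mu$ annihilates $\ker\theta$, because $\ker\theta$ is the sum of the isotypic blocks with $\mu_1\ge3$ (Jones/Temperley--Lieb quotient as the centraliser algebra on $(\mathbb C^2)^{\otimes n}$, with the sign twist). The element $\Cp u$ for $u$ containing $321$ lies in $\ker\theta$. Hence $\phi^\lambda(\Cp u)=0$.

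The main obstacle is the careful matching of conventions. Under one normalisation $\ker\theta$ consists of the blocks with at least three rows, and under the other with at least three columns. I will pin this down with $\Cp{w_0}=\sum_{\sigma\in S_3}\sigma$ for $n=3$: $\chi^\mu(\sum\sigma)=6\langle\chi^\mu,1\rangle$ vanishes unless $\mu=(3)$, so the relevant blocks are those with $\mu_1\ge3$. That matches $\lambda_1\le2$. Alternatively, for the vanishing one can cite \cite[Prop.~3.2]{RS06} together with the Clearman--Shelton--Skandera Temperley--Lieb expansions \cite{CSS}, which express $\Imm_\lambda$ purely in Temperley--Lieb immanants.
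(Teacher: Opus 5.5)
Your proposal is correct and is essentially the paper's proof: $\ker\theta$ is spanned by the $\Cp u$ with $u\ni321$ (Fan--Green); it is the sum of the isotypic blocks with $\mu_1\ge3$, which the paper gets from $\Cp{s_1s_2s_1}=\sum_{v\in S_3}v\in\ker\theta$ together with the count $\dim TL_n(2)=\mathrm{Cat}_n=\sum_{\mu_1\le2}(f^\mu)^2$ and which you get from Schur--Weyl duality with the sign twist; and $\phi^\lambda$ involves only $\chi^\mu$ with $\mu\trianglelefteq\lambda$. In your second step there is no choice of ``formulation'' to make: the inverse of a dominance-unitriangular matrix is unitriangular in the same direction, so $m_\lambda\in\operatorname{span}\{s_\mu:\mu\trianglelefteq\lambda\}$ follows directly from $s_\mu=\sum_{\nu\trianglelefteq\mu}K_{\mu\nu}m_\nu$, and your first claim, that the support is $\mu\trianglerighteq\lambda$, was an indexing error that should be corrected rather than swapped away.
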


\begin{proof}
The map $\theta\colon\mathbb{C}[S_n]\to TL_n(2)$, $1+s_i\mapsto t_i$, is a surjective algebra homomorphism. It kills $\Cp u$ exactly when $u$ contains $321$, and the $\theta(\Cp w)$ for $321$-avoiding $w$ form a basis (Fan--Green \cite{FG}). So $\ker\theta=\operatorname{span}\{\Cp u:u\ni321\}$. A two-sided ideal of $\mathbb{C}[S_n]$ is a sum of isotypic components. The ideal $\ker\theta$ contains $\Cp{s_1s_2s_1}=\sum_{v\in S_3}v$, which acts non-trivially on $V^\mu$ iff the restriction of $\chi^\mu$ to $S_3=\langle s_1,s_2\rangle$ contains the trivial character. By Young's rule this multiplicity is $\langle\chi^\mu,\eta^{(3,1^{n-3})}\rangle=K_{\mu,(3,1^{n-3})}$, which is non-zero iff $\mu_1\ge3$. So $\ker\theta$ contains every component with $\mu_1\ge3$. Since $\dim TL_n(2)=\mathrm{Cat}_n=\sum_{\mu_1\le2}(f^\mu)^2$, comparing dimensions shows that $\ker\theta$ is exactly their sum. Finally, $\phi^\lambda$ involves only $\chi^\mu$ with $\mu\trianglelefteq\lambda$, hence with $\mu_1\le2$, so it vanishes on $\ker\theta$.
\end{proof}

In the ring of symmetric functions, $m_{(2^{b+1},1^c)}\,p_1=m_{(3,2^b,1^c)}+(b+2)\,m_{(2^{b+2},1^{c-1})}+(c+1)\,m_{(2^{b+1},1^{c+1})}$, where the middle term is absent when $c=0$. Hence
\begin{equation}\label{eq:onethree}
\Imm_{(3,2^b,1^c)}=\sum_jx_{jj}\Imm_{(2^{b+1},1^c)}(x[\hat\jmath,\hat\jmath])
-(b+2)\Imm_{(2^{b+2},1^{c-1})}-(c+1)\Imm_{(2^{b+1},1^{c+1})}.
\end{equation}

\begin{proposition}\label{prop:onethree}
For all $b,c\ge0$ and every $u\in S_{3+2b+c}$ that contains $321$, $\phi^{(3,2^b,1^c)}(\Cp u)\ge 0$.
\end{proposition}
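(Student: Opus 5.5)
We plan to read off the coefficient of $\Imm_u$ from \eqref{eq:onethree}. By Lemma~\ref{lem:fangreen}, the two subtracted terms $\Imm_{(2^{b+2},1^{c-1})}$ and $\Imm_{(2^{b+1},1^{c+1})}$ have first part $2$, so their coefficients at any $u$ containing $321$ vanish. Hence for such $u$,
\[
\phi^{(3,2^b,1^c)}(\Cp u)=\text{coefficient of }\Imm_u\text{ in }\sum_j x_{jj}\,\Imm_{(2^{b+1},1^c)}(x[\hat\jmath,\hat\jmath]).
\]
So it suffices to show that each summand $x_{jj}\Imm_{(2^{b+1},1^c)}(x[\hat\jmath,\hat\jmath])$ is KL-positive.

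For this we apply Theorem~\ref{thm:R2}(1) with $i=j$. This requires $\Imm_{(2^{b+1},1^c)}$ to be KL-positive on the $(n-1)\times(n-1)$ submatrix $x[\hat\jmath,\hat\jmath]$. The shape $(2^{b+1},1^c)$ has at most two columns, and for such shapes KL-positivity is the Clearman--Shelton--Skandera result recalled in the introduction: their Temperley--Lieb expansion, combined with \cite[Prop.~3.2]{RS06}, gives it. Reindexing the submatrix by the order-preserving bijections makes this exactly the hypothesis of Theorem~\ref{thm:R2}(1). Each summand is then a non-negative integer combination of the $\Imm_w$, and so is the sum. Its coefficient at $u$ is therefore $\ge0$, and this equals $\phi^{(3,2^b,1^c)}(\Cp u)$.

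Before relying on \eqref{eq:onethree}, we would check it. The expansion $m_{(2^{b+1},1^c)}p_1=m_{(3,2^b,1^c)}+(b+2)m_{(2^{b+2},1^{c-1})}+(c+1)m_{(2^{b+1},1^{c+1})}$ is a routine monomial multiplication: a variable is added to a part $2$, to a part $1$, or as a new part, and the coefficients count the merges. The product $\phi^{(2^{b+1},1^c)}\cdot\phi^{(1)}$ has immanant $\sum_j x_{jj}\,d(x[\hat\jmath,\hat\jmath])$ by \eqref{eq:indprod}, and $\phi^{(1)}$ is trivial on $S_1$.

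We expect the only real obstacle to be confirming that the cited two-column result gives KL-positivity for every such shape and every size, rather than only non-negativity on $\TNN$. The introduction asserts this, so it is an input. The Fan--Green vanishing of Lemma~\ref{lem:fangreen} is what removes the negative terms, and it is the reason the argument is restricted to $u$ containing $321$.
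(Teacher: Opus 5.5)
Your proposal is correct and is essentially the paper's proof. In both, Lemma~\ref{lem:fangreen} kills the two-column terms of \eqref{eq:onethree} at $u\ni321$, and each $x_{jj}\Imm_{(2^{b+1},1^c)}(x[\hat\jmath,\hat\jmath])$ is KL-positive by Theorem~\ref{thm:R2}(1), using the Clearman--Shelton--Skandera Temperley--Lieb expansion together with \cite[Prop.~3.2]{RS06}.
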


\begin{proof}
At $u\ni321$, the two-column terms of \eqref{eq:onethree} vanish by Lemma~\ref{lem:fangreen}. The first term is KL-positive: $\Imm_{(2^{b+1},1^c)}$ is a non-negative combination of Temperley--Lieb immanants \cite{CSS}, which are Kazhdan--Lusztig immanants \cite{RS06}, so each $x_{jj}\Imm_{(2^{b+1},1^c)}(x[\hat\jmath,\hat\jmath])$ is KL-positive by Theorem~\ref{thm:R2}(1).
\end{proof}

So Conjecture~\ref{conj:H} for the shapes $(3,2^b,1^c)$ reduces to $321$-avoiding permutations. Those avoiding $312$ or $231$ are smooth and covered by Proposition~\ref{thm:smooth}. For $321$-avoiding, hexagon-avoiding $w$ with reduced word $s_{i_1}\cdots s_{i_l}$, Billey and Warrington \cite[Thm.~1]{BW} give $\Cp w=(1+s_{i_1})\cdots(1+s_{i_l})$, so $c_{\lambda,w}=\sum_{S\subseteq[l]}\phi^\lambda(\prod_{k\in S}s_{i_k})$.

\section{Beyond class functions}\label{sec:beyond}

The cones of Section~\ref{sec:cones} consist of class functions, and for these $\CKL=\CTNN$ when $n\le10$. Without the restriction to class functions this fails already for $n=4$ (Rhoades--Skandera \cite[\S7]{RS06}; Skandera and Zelevinsky give an explicit element, reported in \cite[(5.1)]{Ska08}). We recall the setting, give examples for $n=5$, and prove that the TNN-isolated permutations are exactly the smooth ones (Theorem~\ref{thm:isolated}). Let $\Pi_n$ be the span of the permutation monomials $x^v$. Every $F\in\Pi_n$ is uniquely $F=\sum_wa_w\Imm_w$, and we put
\[
\CKL(\Pi_n)=\{F:\ a_w\ge0\ \forall w\},\qquad \CTNN(\Pi_n)=\{F:\ F\ge0\text{ on }\TNN_n\}.
\]
Then $\CKL(\Pi_n)\subseteq\CTNN(\Pi_n)$, and the cones of Section~\ref{sec:cones} are their intersections with the immanants $d_\phi$, $\phi\in\CF_n$. Call $w$ \emph{TNN-isolated} if every $F\in\CTNN(\Pi_n)$ has $a_w\ge0$. So $\CKL(\Pi_n)=\CTNN(\Pi_n)$ iff every $w$ is TNN-isolated. Equivalently, since the evaluation vectors $(\Imm_v(A))_v$, $A\in\TNN_n$, generate the cone dual to $\CTNN(\Pi_n)$, $w$ is TNN-isolated iff the coordinate vector $e_w$ is a limit of positive combinations of evaluation vectors.

These cones go back to Stembridge \cite[\S5]{Ste92}, who studied the cone of total positivity in $\R S_n$, whose dual is $\CTNN(\Pi_n)$. Rhoades and Skandera \cite[\S7]{RS06} noted $\CKL(\Pi_n)\subsetneq\CTNN(\Pi_n)$ for $n\ge4$, observed that many $\Imm_w$ seem to span extreme rays of $\CTNN(\Pi_n)$, and asked whether $a_w\ge0$ for every $F\in\CTNN(\Pi_n)$ whenever $X_w$ is smooth \cite[Question~7.1]{RS06}. Skandera answered this question \cite{Ska08} and asked for the precise relation between the cones \cite[Problem~5.8]{Ska08}.

\begin{theorem}[Skandera {\cite[Thms.~5.4, 5.5]{Ska08}}; asked in {\cite[Question~7.1]{RS06}}]\label{thm:skandera}
For every $n$, every smooth $w\in S_n$ is TNN-isolated: there is $A_w\in\TNN_n$ with $\Imm_v(A_w)=\delta_{v,w}$ for all $v$.
\end{theorem}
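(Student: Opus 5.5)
The plan is to reduce the statement to a combinatorial identity and then to build the matrix by induction. First, for smooth $w$ the Schubert variety $X_w$ is rationally smooth, so $P_{v,w}=1$ for all $v\le w$, and $\Cp w=\sum_{v\le w}v$. By \eqref{eq:duality} applied to $f=\delta_v$, the required condition $\Imm_v(A_w)=\delta_{v,w}$ for all $v$ holds exactly when $x^v(A_w)=\delta_v(\Cp w)=[v\le w]$ for every $v\in S_n$. The reason is that both sides are linear functionals on $\Pi_n$ that agree on the basis of monomials. So the task is to find a matrix $A_w\in\TNN_n$ whose permutation monomials are the indicator of the lower Bruhat interval $[e,w]$. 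For codominant $w$ this is Proposition~\ref{prop:hessenberg}: the Hessenberg $0/1$ matrix works, because $[e,w]$ is the rook placements on a Ferrers board.

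Second, I would use the symmetries of the problem to enlarge this base class. Transposition sends $x^v$ to $x^{v^{-1}}$. Conjugation by the antidiagonal permutation matrix sends $x^v$ to $x^{w_0vw_0}$. Both operations preserve total non-negativity, and both preserve Bruhat order. Hence if $A_w$ works for $w$, then $A_w^T$ works for $w^{-1}$ and $w_0A_ww_0$ works for $w_0ww_0$. This already covers permutations avoiding $312$, $231$ and their images under these maps. The rook-board argument shows more: $A_w$ can be chosen $0/1$ with rows that are intervals, as long as $[e,w]$ is cut out by a single Ferrers-type shape. By the characterisation of \cite{Sjo}, this gives every $w$ avoiding Sj\"ostrand's four patterns.

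Third, for general smooth $w$ (avoiding $3412$ and $4231$) I would use the Gasharov--Reiner description. The interval $[e,w]$ is defined by non-inclusions, and smooth $w$ admit a recursive decomposition. Either the largest value $n$, or the entry in position $n$, can be removed so that $[e,w]$ is obtained from $[e,w']$, with $w'\in S_{n-1}$ smooth, by a "free" insertion of that entry into a consecutive range of positions or values. The inductive step is to border $A_{w'}$ by one new row and column, or more generally to take a product $A_w=B\,C$ of totally non-negative matrices (planar networks). Here $B$ realises $A_{w'}$ on a block, and $C$ is an elementary bidiagonal factor realising the insertion range. I would then check, via Lindstr\"om's lemma or Cauchy--Binet on monomials, that
\[
  x^v(BC)=\sum_{u}x^u(B)\,x^{u^{-1}v}(C)\in\{0,1\}
\]
equals $[v\le w]$. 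For this one may need to scale entries, since $0/1$ entries alone may overcount, so that each $v\le w$ is reached by exactly one non-intersecting path family. Total non-negativity is automatic for products of totally non-negative factors.

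The main obstacle is this last step. In general $[e,w]$ is not a rook board, so a $0/1$ matrix need not work, and the Cauchy--Binet expansion of a product can produce monomial values larger than $1$. What I would try first is the $k=0$ case of Theorem~\ref{thm:clean} in reverse. The Abreu--Nigro reduction of a smooth $w$ to a codominant $w'$ passes through steps that change the character values only by conjugation-type moves. I would try to realise each such move on matrices by conjugating with a positive bidiagonal factor $I+tE_{l,l+1}$ and taking a limit $t\to0$ or $t\to\infty$ (one then uses that $\TNN_n$ is closed). The aim is to show that the monomial vector moves from $[e,w']$ to $[e,w]$. If that fails, I would fall back on Skandera's explicit planar networks from \cite{Ska08}.
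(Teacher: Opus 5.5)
\textbf{Verdict: genuine gap.} Your first step is correct: for smooth $w$, $\Imm_v(A)=\delta_{v,w}$ for all $v$ iff $x^v(A)=[v\le w]$ for all $v$. The two symmetries are also correct. But the proposal never constructs such an $A\in\TNN_n$ for a general smooth $w$. Your third step is left open and ends by falling back on \cite{Ska08}, which is the statement to be proved, and the tools proposed there fail as written:
\begin{itemize}
\item The displayed expansion of $x^v(BC)$ is not Cauchy--Binet. A monomial of a product expands over all maps $[n]\to[n]$, not only over permutations: for $B=C=J_{(2)}$ and $v=e$ the right side is $2$, while $x^e(BC)=4$. Lindstr\"om's lemma controls minors, not monomials.
\item The Abreu--Nigro moves preserve only traces, i.e.\ values of class functions. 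These do not determine the individual values $[v\le w]$.
\item Conjugation by $I+tE_{l,l+1}$ does not preserve $\TNN_n$.
\end{itemize}

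Your second step is wrong as stated, but it contains the right idea. Sj\"ostrand's theorem gives skew boards, and for these the $0/1$ matrix with interval rows need not be TNN. For example, $w=3412$ avoids Sj\"ostrand's patterns, and $[e,w]$ is the set of rook placements on the board with rows $[1,3],[1,4],[1,4],[2,4]$. Yet the minor of this matrix on rows and columns $\{1,2,4\}$ is $-1$. Also, for singular $w$ your first step does not apply.

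What you missed is that each of $35142$, $42513$, $351624$ contains $3412$. So every smooth $w$ avoids all four patterns, and the second step restricted to smooth $w$ already gives the theorem. For such $w$, $[e,w]$ is the set of $v$ with $\min_{k\ge i}w(k)\le v(i)\le\max_{k\le i}w(k)$ for all $i$. The tableau criterion gives one inclusion. For the other, the right hull is a skew board containing the graph of $w$, hence contains this board.

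The missing ingredient is a total non-negativity criterion. A $0/1$ matrix whose rows are intervals with weakly increasing endpoints has all minors in $\{0,1\}$, unless it has a $3\times3$ submatrix
\[
O=\begin{pmatrix}1&1&0\\1&1&1\\0&1&1\end{pmatrix}.
\]
Prove this by induction, with no zero rows. Let the ones of column~$1$ fill rows $1,\dots,r$.
\begin{itemize}
\item If $r\le1$, or row~$1$ is $(1,0,\dots,0)$, expand along it.
\item If rows $1$ and $2$ coincide, the determinant vanishes.
\item Otherwise, either $O$ occurs on rows $1,2,r+1$, or columns $1$ and $2$ coincide.
\end{itemize}
Now suppose $O$ occurs in the board of $w$ on rows $i_1<i_2<i_3$ and columns $j_1<j_2<j_3$. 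Then there are positions $a\le i_1<p\le i_2\le q<i_3\le b$ with $w(q)<w(b)<w(a)<w(p)$, which is an occurrence of $3412$. Hence for smooth $w$ the board matrix is TNN, and $\Imm_v=\delta_{v,w}$ on it.

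The paper does not reprove this theorem. It cites Skandera's planar networks, and constructs $A_w$ itself only for codominant $w$ (Proposition~\ref{prop:hessenberg}). So this repaired second step would be a self-contained alternative.
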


Skandera's matrix $A_w$ is the path matrix of a planar network built from a factorisation of $w$. For codominant $w$, Proposition~\ref{prop:hessenberg} gives such a matrix directly, the antiadjacency matrix of a unit interval order.

A \emph{chart} is a factorisation into elementary factors $I+tE_{i+1,i}$ and $I+tE_{i,i+1}$ whose lower letters form a reduced word of $w_0$, as do its upper letters. By Fomin--Zelevinsky \cite{FZ}, positive parameters give every totally positive matrix up to a positive diagonal factor, which multiplies every element of $\Pi_n$ by its determinant. Since totally positive matrices are dense in $\TNN_n$, an $F\in\Pi_n$ whose expansion in the parameters of some chart has only non-negative coefficients is non-negative on $\TNN_n$.

\begin{theorem}\label{thm:beyond}
The elements
\begin{gather*}
\Imm_{15432}+\Imm_{43215}-\Imm_{45312},\qquad \Imm_{21543}+\Imm_{54321}-\Imm_{52431},\\ \Imm_{32154}+\Imm_{54321}-\Imm_{53241}
\end{gather*}
of $\Pi_5$ are non-negative on $\TNN_5$ and are not KL-positive. So $\CKL(\Pi_5)\subsetneq\CTNN(\Pi_5)$, and $45312$, $52431$, $53241$ are not TNN-isolated.
\end{theorem}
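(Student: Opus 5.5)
The plan has two parts for each of the three elements: non-negativity on $\TNN_5$, and failure of KL-positivity. Failure of KL-positivity is immediate. The $\Imm_w$ form a basis of $\Pi_5$ dual to $\{\Cp w\}$, so each element has coefficient $-1$ at $\Imm_{45312}$, $\Imm_{52431}$, $\Imm_{53241}$ respectively. Non-negativity then shows these $w$ are not TNN-isolated and that $\CKL(\Pi_5)\ne\CTNN(\Pi_5)$.

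The substance is non-negativity, and I would follow the chart method described just before the statement. First, expand each of the nine immanants as an explicit polynomial in the permutation monomials $x^v$, using the Rhoades--Skandera formula $\Imm_w=\sum_{v\ge w}(-1)^{\ell(v)-\ell(w)}P_{w_0v,w_0w}(1)\,x^v$. This needs only the Kazhdan--Lusztig polynomials of $S_5$ on the relevant upper intervals. Second, fix one chart $x=L(t)U(t')$, with lower factors $I+t_kE_{i_k+1,i_k}$ and upper factors $I+t'_kE_{i_k,i_k+1}$ along a reduced word of $w_0$, which gives 20 parameters. Substitute it and expand each combination $F$ as a polynomial in the $t,t'$. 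If every coefficient is non-negative, Fomin--Zelevinsky and density finish the argument: the diagonal factor only multiplies $F$ by a positive determinant, and continuity passes from totally positive matrices to $\TNN_5$.

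The main obstacle is that a given chart may show negative coefficients even when $F\ge0$, since positivity of the parameter expansion depends on the chart. I would first try the charts adapted to the symmetry of each example. The three elements are related by the symmetries $x\mapsto x^T$ and $x\mapsto w_0xw_0$, which act on the indices by $w\mapsto w^{-1}$ and $w\mapsto w_0ww_0$ and preserve both $\TNN_5$ and KL-positivity. If they are in one orbit, only one chart computation is needed. Otherwise, I would try charts with a reduced word of $w_0$ whose local structure matches the $4231$- or $3412$-type occurrence in $w$. If no single chart works, I would fall back on more structure. One option is to split $F$ using Theorem~\ref{thm:R2} into a KL-positive part plus a small remainder. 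Another is to write the negative term via a lifted $\Pi_4$ inequality, for example $\Imm_{4231}^2\le3\Imm_{2143}\Imm_{4321}$ along the pattern, and check that remainder in a chart.

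The exact polynomial arithmetic is routine but large, and would be done by computer, as the paper says it is.
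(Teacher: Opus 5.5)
Your plan is the paper's proof: the coefficient $-1$ in the basis $\{\Imm_w\}$ rules out KL-positivity, and non-negativity comes from an exact expansion with non-negative coefficients in some chart, checked by computer. The paper uses the standard chart for the first two elements and a specially chosen mixed chart for the third. On your orbit question: $45312$ (indeed the whole first element) is fixed by both symmetries, so the three elements are not in one orbit, but substituting $x\mapsto w_0xw_0$ into the second element gives exactly the third, so two chart computations would already suffice.
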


\begin{proof}
For each element there is a chart in which its expansion has only non-negative coefficients: the standard chart (lower word $s_4\,s_3s_4\,s_2s_3s_4\,s_1s_2s_3s_4$, followed by its reverse in the upper factors) for the first two, and for the third the chart
\[
L_1L_2U_1L_3U_3L_1U_2L_2U_4U_3L_1L_4L_3U_1L_2L_1U_2U_4U_1U_3,
\]
with $L_i(t)=I+tE_{i+1,i}$, $U_i(t)=I+tE_{i,i+1}$ and one parameter per factor; its lower letters $1,2,3,1,2,1,4,3,2,1$ and upper letters $1,3,2,4,3,1,2,4,1,3$ are reduced words of $w_0$. This is verified in exact integer arithmetic: the three expansions have $870$, $916$ and $10{,}611$ monomials, all with coefficients between $1$ and $16$ (\texttt{paper/hooks\_kl\_examples.py} in \cite{McNcode}). They are not KL-positive because the coefficients $-1$ are coefficients in the basis $\{\Imm_w\}$.
\end{proof}

All three non-isolated permutations are singular, as they must be by Theorem~\ref{thm:skandera}. We now show that Skandera's theorem is sharp for every $n$: no singular $w$ is TNN-isolated. The proof lifts two elements of $\CTNN(\Pi_4)$ along an occurrence of $3412$ or $4231$ in $w$, and controls the lift with Theorem~\ref{thm:R2}.

\begin{proposition}\label{prop:n4ineq}
On $\TNN_4$,
\[
\Imm_{3412}^2\le\Imm_{1432}\Imm_{3214},\qquad \Imm_{4231}^2\le3\,\Imm_{2143}\Imm_{4321},
\]
where $\Imm_{4321}=x_{14}x_{23}x_{32}x_{41}$. Hence $F=\tfrac12(\Imm_{1432}+\Imm_{3214})-\Imm_{3412}$ and $G_\varepsilon=\varepsilon\Imm_{2143}+\tfrac{3}{4\varepsilon}\Imm_{4321}-\Imm_{4231}$, for every $\varepsilon>0$, lie in $\CTNN(\Pi_4)$.
\end{proposition}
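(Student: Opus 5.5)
The plan is to make both inequalities concrete by writing the four-variable Kazhdan--Lusztig immanants in question as explicit polynomials, and then reduce each inequality to a statement about products of minors that are known to be comparable on $\TNN_4$. The second part of the statement is immediate once the inequalities hold. For $F$, use AM--GM: $\tfrac12(\Imm_{1432}+\Imm_{3214})\ge\sqrt{\Imm_{1432}\Imm_{3214}}\ge\Imm_{3412}$. The middle term is a genuine square root because both factors are non-negative on $\TNN_4$, and the last step holds even if $\Imm_{3412}\le0$ is allowed. For $G_\varepsilon$, use $\varepsilon X+\tfrac{3}{4\varepsilon}Y\ge2\sqrt{\tfrac34XY}=\sqrt{3XY}\ge\Imm_{4231}$.

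First I will compute the immanants. For $3412$ the definition gives $\Imm_{3412}=\sum_{v\ge3412}(-1)^{\ell(v)-4}P_{w_0v,w_0w}(1)x^v$. Since $w_0\cdot3412=2143$ and the interval $[\,\cdot\,,2143]$ is small (all $P=1$), this yields
\[
\Imm_{3412}=x_{13}x_{24}x_{31}x_{42}-x_{13}x_{24}x_{32}x_{41}-x_{14}x_{23}x_{31}x_{42}+x_{14}x_{23}x_{32}x_{41}=\det x[12,34]\det x[34,12].
\]
Similarly, $3214$, $1432$ and $2143$ have lower intervals inside Young subgroups, with all $P=1$. So $\Imm_{3214}=x_{44}\cdot(\text{antidiagonal-type }3\times3\text{ immanant})$, and in general these factor as products over blocks. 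I would verify each by the Temperley--Lieb description in the cases that are $321$-avoiding ($3412$, $2143$) and directly in the others. The key identities I expect are $\Imm_{1432}=x_{11}\,\Imm_{321}(x[234,234])$ and $\Imm_{3214}=\Imm_{321}(x[123,123])\,x_{44}$, where $\Imm_{321}(y)=y_{13}y_{22}y_{31}$ is the monomial. So the first inequality reads
\[
\bigl(\det x[12,34]\det x[34,12]\bigr)^2\le x_{11}x_{24}x_{33}x_{42}\cdot x_{13}x_{22}x_{31}x_{44}.
\]
This follows from two $2\times2$ minor inequalities on TNN matrices: $\det x[12,34]\le x_{13}x_{24}$, and by the same token $\det x[12,34]\le x_{13}x_{24}$ paired against other entries. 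The more precise route is to bound each squared $2\times2$ minor by products like $x_{13}x_{24}\cdot x_{14}x_{23}$ and then use monotonicity of TNN entries ($x_{11}x_{33}\ge x_{13}x_{31}$, $x_{22}x_{44}\ge x_{24}x_{42}$, and so on) to rearrange the factors into the right-hand side.

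For $4231$ the coefficient $3$ suggests a nontrivial $P$ (indeed $P_{2143,4231}=1+q$ is not what matters here, but $P_{e,\cdot}$ values near $w_0$ do). I would compute $\Imm_{4231}$ explicitly from the definition, needing $P_{w_0v,1324}$, which are all $1$. I expect it to factor as $x_{14}x_{41}\det x[23,23]$ minus correction terms, or as a small signed sum. Then I would use Fomin--Zelevinsky charts as in Theorem~\ref{thm:beyond} to reduce to a polynomial inequality in positive parameters. The main obstacle is this second inequality, with its sharp constant $3$. For it I would try writing $3\Imm_{2143}\Imm_{4321}-\Imm_{4231}^2$ in the standard chart and checking that all coefficients are non-negative, falling back on an exact computer expansion if no slick minor inequality appears.
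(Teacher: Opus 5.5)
Your deduction of $F,G_\varepsilon\in\CTNN(\Pi_4)$ from the two inequalities is the paper's and is fine, and $\Imm_{3412}=\det x[12,34]\det x[34,12]$ is correct. The gap is in the first inequality: your identities $\Imm_{1432}=x_{11}\Imm_{321}(x[234,234])$ and $\Imm_{3214}=\Imm_{321}(x[123,123])\,x_{44}$ are false. That the lower intervals of $1432$, $3214$, $2143$ are Young subgroups describes $\Cp w$ (e.g.\ $\Cp{1432}=\sum_{v\in S_1\times S_3}v$), not $\Imm_w$. In $\Imm_w=\sum_{v\ge w}(-1)^{\ell(v)-\ell(w)}P_{w_0v,w_0w}(1)x^v$ the sum runs over $[w,w_0]$, i.e.\ over the lower interval of $w_0w$. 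Here $w_0\cdot1432=4123=s_3s_2s_1$ and $w_0\cdot3214=2341=s_1s_2s_3$ have Boolean lower intervals with eight elements. For instance
\[
\Imm_{1432}=x^{1432}-x^{2431}-x^{3412}-x^{4132}+x^{3421}+x^{4231}+x^{4312}-x^{4321},
\]
which vanishes at $J_{(4)}$, as Lemma~\ref{lem:J}(3) requires, whereas $x^{1432}(J_{(4)})=1$. Likewise $\Imm_{2143}$, with $w_0\cdot2143=3412$, has coefficients $-2$ and $2$ at $x^{4231}$ and $x^{4321}$. Worse, the error points the wrong way. Since $x^{1432}=\sum_wP_{1432,w}(1)\Imm_w$, it exceeds $\Imm_{1432}$ by a KL-positive element, so $x^{1432}x^{3214}\ge\Imm_{1432}\Imm_{3214}$ on $\TNN_4$, strictly near $J_{(4)}$. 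Your minor argument does correctly give $\Imm_{3412}^2\le x^{1432}x^{3214}$, via $\det x[12,34]\le x_{13}x_{24}$, $\det x[34,12]\le x_{31}x_{42}$, $x_{13}x_{31}\le x_{11}x_{33}$ and $x_{24}x_{42}\le x_{22}x_{44}$. But that bounds $\Imm_{3412}^2$ by something larger than the required right-hand side, so it proves nothing. What is needed is a \emph{lower} bound for $\Imm_{1432}\Imm_{3214}$.

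For the second inequality the proposal has no argument yet, only a computation to be done. Your guess $\Imm_{4231}=x_{14}x_{41}\det x[23,23]$ is exactly right, since $w_0\cdot4231=1324$. The idea that handles both inequalities, and is the paper's proof, is to pass to a chart in which the left-hand sides have very few monomials. In $L_3(t_1)L_2(t_2)L_1(t_3)L_3(t_4)L_2(t_5)L_3(t_6)U_3(t_7)U_2(t_8)U_1(t_9)U_3(t_{10})U_2(t_{11})U_3(t_{12})$ the immanants $\Imm_{1432},\Imm_{3214},\Imm_{2143},\Imm_{4321}$ have positive coefficients, $\Imm_{3412}=m$ is a single monomial, and $\Imm_{4231}=m_1+m_2+m_3$. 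It then suffices to exhibit a monomial $a$ of $\Imm_{1432}$ and a monomial $b$ of $\Imm_{3214}$ with $ab=m^2$. Similarly one needs monomials $a_1,a_2$ of $\Imm_{2143}$ and $b_1,b_2$ of $\Imm_{4321}$ with $m_1^2=a_1b_1$, $m_2^2=a_2b_1$ and $m_3^2=a_2b_2$. Then $(m_1+m_2+m_3)^2\le3(m_1^2+m_2^2+m_3^2)$ gives the second inequality on totally positive matrices. The diagonal factor and density extend both inequalities to $\TNN_4$. So the $3$ just counts the monomials of $\Imm_{4231}$ in that chart; it does not come from a Kazhdan--Lusztig polynomial and is not sharp. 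Your fallback, checking that $3\Imm_{2143}\Imm_{4321}-\Imm_{4231}^2$ has non-negative coefficients in a chart, does succeed in this particular chart; one can check that it holds there even without the factor $3$. But until a chart is chosen and the expansion is carried out, neither inequality has been proved.
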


\begin{proof}
Use the chart (Figure~\ref{fig:network})
\[
L_3(t_1)L_2(t_2)L_1(t_3)L_3(t_4)L_2(t_5)L_3(t_6)\,U_3(t_7)U_2(t_8)U_1(t_9)U_3(t_{10})U_2(t_{11})U_3(t_{12}),
\]
where $L_i(t)=I+tE_{i+1,i}$ and $U_i(t)=I+tE_{i,i+1}$; the word $s_3s_2s_1s_3s_2s_3$ is a reduced word of $w_0$. In this chart $\Imm_{1432}$, $\Imm_{3214}$, $\Imm_{2143}$ and $\Imm_{4321}$ have only positive coefficients, $\Imm_{3412}=m$ is a single monomial, and $\Imm_{4231}=m_1+m_2+m_3$, where
\begin{gather*}
m=t_2t_3t_4t_5t_8t_9t_{10}t_{11},\qquad m_1=t_1t_2t_3^2t_5t_8t_9^2t_{11}t_{12},\\
m_2=t_1t_2t_3^2t_9^2t_{11}t_{12},\qquad m_3=t_1t_2t_3t_9t_{11}t_{12}.
\end{gather*}
The monomial $a=t_2t_4t_5t_8t_{10}t_{11}$ of $\Imm_{1432}$ and the monomial $b=t_2t_3^2t_4t_5t_8t_9^2t_{10}t_{11}$ of $\Imm_{3214}$ satisfy $m^2=ab$, so $\Imm_{3412}^2\le\Imm_{1432}\Imm_{3214}$ for positive parameters. The monomials $a_1=t_1t_3t_5^2t_8^2t_9t_{12}$ and $a_2=t_1t_3t_9t_{12}$ of $\Imm_{2143}$ and $b_1=t_1t_2^2t_3^3t_9^3t_{11}^2t_{12}$ and $b_2=t_1t_2^2t_3t_9t_{11}^2t_{12}$ of $\Imm_{4321}$ satisfy $m_1^2=a_1b_1$, $m_2^2=a_2b_1$ and $m_3^2=a_2b_2$. By the Cauchy--Schwarz inequality,
\[
\Imm_{4231}^2\le3(m_1^2+m_2^2+m_3^2)\le3(a_1+a_2)(b_1+b_2)\le3\Imm_{2143}\Imm_{4321} .
\]
A positive diagonal factor multiplies both sides of each inequality by $\det(D)^2$, so both hold on the totally positive matrices, and by density on $\TNN_4$. The last statement follows from $\sqrt{XY}\le\frac{X+Y}2$ and $\sqrt{3XY}\le\varepsilon X+\frac3{4\varepsilon}Y$ for $X,Y\ge0$, since Kazhdan--Lusztig immanants are non-negative on $\TNN_4$.
\end{proof}

\begin{figure}[ht]
\centering
\begin{tikzpicture}[xscale=0.78,yscale=0.95,>=stealth]
  \draw[thick] (0,-1) -- (13,-1);
  \node[left,font=\scriptsize] at (0,-1) {$1$};
  \node[right,font=\scriptsize] at (13,-1) {$1'$};
  \draw[thick] (0,-2) -- (13,-2);
  \node[left,font=\scriptsize] at (0,-2) {$2$};
  \node[right,font=\scriptsize] at (13,-2) {$2'$};
  \draw[thick] (0,-3) -- (13,-3);
  \node[left,font=\scriptsize] at (0,-3) {$3$};
  \node[right,font=\scriptsize] at (13,-3) {$3'$};
  \draw[thick] (0,-4) -- (13,-4);
  \node[left,font=\scriptsize] at (0,-4) {$4$};
  \node[right,font=\scriptsize] at (13,-4) {$4'$};
  \draw[->,thick] (0.7,-4) -- (1.3,-3);
  \node[font=\tiny,fill=white,inner sep=0.5pt] at (1,-3.5) {$t_{1}$};
  \draw[->,thick] (1.7,-3) -- (2.3,-2);
  \node[font=\tiny,fill=white,inner sep=0.5pt] at (2,-2.5) {$t_{2}$};
  \draw[->,thick] (2.7,-2) -- (3.3,-1);
  \node[font=\tiny,fill=white,inner sep=0.5pt] at (3,-1.5) {$t_{3}$};
  \draw[->,thick] (3.7,-4) -- (4.3,-3);
  \node[font=\tiny,fill=white,inner sep=0.5pt] at (4,-3.5) {$t_{4}$};
  \draw[->,thick] (4.7,-3) -- (5.3,-2);
  \node[font=\tiny,fill=white,inner sep=0.5pt] at (5,-2.5) {$t_{5}$};
  \draw[->,thick] (5.7,-4) -- (6.3,-3);
  \node[font=\tiny,fill=white,inner sep=0.5pt] at (6,-3.5) {$t_{6}$};
  \draw[->,thick] (6.7,-3) -- (7.3,-4);
  \node[font=\tiny,fill=white,inner sep=0.5pt] at (7,-3.5) {$t_{7}$};
  \draw[->,thick] (7.7,-2) -- (8.3,-3);
  \node[font=\tiny,fill=white,inner sep=0.5pt] at (8,-2.5) {$t_{8}$};
  \draw[->,thick] (8.7,-1) -- (9.3,-2);
  \node[font=\tiny,fill=white,inner sep=0.5pt] at (9,-1.5) {$t_{9}$};
  \draw[->,thick] (9.7,-3) -- (10.3,-4);
  \node[font=\tiny,fill=white,inner sep=0.5pt] at (10,-3.5) {$t_{10}$};
  \draw[->,thick] (10.7,-2) -- (11.3,-3);
  \node[font=\tiny,fill=white,inner sep=0.5pt] at (11,-2.5) {$t_{11}$};
  \draw[->,thick] (11.7,-3) -- (12.3,-4);
  \node[font=\tiny,fill=white,inner sep=0.5pt] at (12,-3.5) {$t_{12}$};
\end{tikzpicture}
\caption{The chart of Proposition~\ref{prop:n4ineq} as a planar network. Paths run from the sources $1,\dots,4$ on the left to the sinks $1',\dots,4'$ on the right; an edge labelled $t_k$ from line $a$ to line $b$ is the factor $I+t_kE_{a,b}$, and horizontal edges have weight $1$. The $(i,j)$ entry of the product of the twelve factors is the sum of the weights of the paths from $i$ to $j'$ (Lindstr\"om). The first six edges are the lower factors $L_i(t)=I+tE_{i+1,i}$, and the last six the upper factors $U_i(t)=I+tE_{i,i+1}$.}\label{fig:network}
\end{figure}

The Skandera--Zelevinsky element is $\Imm_{3214}+\Imm_{1432}-\Imm_{3412}=2F+\Imm_{3412}$, so $F$ is the stronger statement.

Now fix $w\in S_n$ and $I\subseteq[n]$ with $|I|=k$, and put $T=w(I)$. As in the proof of Theorem~\ref{thm:R2}, $\varphi(\sigma)=(\sigma_I,\sigma_{I^c})$ for $\sigma(I)=T$. Let $p=w_I$ and $w_c=w_{I^c}$, so that $w$ contains the pattern $p$ in the positions $I$. For $s\in S_k$ let $\sigma_s$ be the permutation with $\sigma_s(I)=T$ and $\varphi(\sigma_s)=(s,w_c)$: it agrees with $w$ outside $I$ and has pattern $s$ in the positions $I$. For $u\in S_k$ let $A(u)$ be the coefficient of $\Imm_w$ in $\Imm_u(x[I,T])\,\Imm_{w_c}(x[I^c,T^c])$. By Theorem~\ref{thm:R2}, $A(u)\in\N$.

\begin{lemma}\label{lem:lift}
\begin{enumerate}
\item $A(p)=1$, and $A(u)=0$ for every $u>p$.
\item For every $u\in S_k$, $A(u)+\sum_{u'>u}A(u')\,P_{u,u'}(1)\le P_{\sigma_u,w}(1)$.
\end{enumerate}
\end{lemma}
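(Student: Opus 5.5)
The plan is to read everything off the identity \eqref{eq:BB} for the coset $W'z=\{\sigma:\sigma(I)=T\}$, which contains $w$. By the proof of Theorem~\ref{thm:R2}, the coefficient of $\Imm_w$ in $\Imm_u(x[I,T])\,\Imm_{w_c}(x[I^c,T^c])$ is $a_y(w)$ for the unique $y$ in the coset with $\varphi(y)=(u,w_c)$. That $y$ is $\sigma_u$, so $A(u)=a_{\sigma_u}(w)$. The Kazhdan--Lusztig polynomials of $W'\cong S_k\times S_{n-k}$ are products, so \eqref{eq:BB} at a point $\sigma$ of the coset reads
\[
P_{\sigma,w}(1)=\sum_{y\in W'z}a_y(w)\,P_{\sigma_I,y_I}(1)\,P_{\sigma_{I^c},y_{I^c}}(1),
\]
and every term is a non-negative integer.

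\emph{Part (2).} Take $\sigma=\sigma_u$, so $\varphi(\sigma)=(u,w_c)$, and keep only the terms with $y=\sigma_{u'}$ for $u'\ge u$. Since $P_{w_c,w_c}=1$, these terms are $A(u')\,P_{u,u'}(1)$. The term $u'=u$ contributes $A(u)$ because $P_{u,u}=1$. All discarded terms are non-negative, which gives the inequality.

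\emph{Part (1).} Apply (2) at $u=p$. Then $\sigma_p=w$ and $P_{w,w}(1)=1$, so
\[
A(p)+\sum_{u'>p}A(u')\,P_{p,u'}(1)\le1 .
\]
Each $P_{p,u'}(1)\ge1$ for $u'>p$, and all $A$'s are non-negative integers. So it suffices to show $A(p)\ge1$; then $A(p)=1$ and $A(u')=0$ for every $u'>p$.

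To get $A(p)\ge1$, use the full identity at $\sigma=w$: it reads $1=\sum_y a_y\,P_{\varphi(w),\varphi(y)}(1)$. Hence there is a unique $y$ with $a_y=1$ and $\varphi(y)\ge\varphi(w)$ componentwise, and all other $y$ with $\varphi(y)\ge\varphi(w)$ have $a_y=0$. For this $y$, the identity at $\sigma=y$ gives $P_{y,w}(1)\ge a_y\,P_{\varphi(y),\varphi(y)}(1)=1$, so $y\le w$ in Bruhat order.

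The main obstacle is the remaining step: $y\le w$ in $S_n$ together with $\varphi(y)\ge\varphi(w)$ must force $y=w$. For this I would invoke the fact from Billey--Braden that the pattern map restricted to a coset $W'z$ is order-preserving: $y\le w$ implies $\varphi(y)\le\varphi(w)$. This should follow from their description of the Bruhat order on $W'z$ via the induced reflection order, or, if needed, from a direct check with the rank-matrix criterion for Bruhat order on permutations that agree in the set-valued pattern $\sigma(I)=T$. Granting it, $\varphi(y)=\varphi(w)$, and since $\varphi$ is a bijection on the coset, $y=w$. Thus $a_w=A(p)=1$.

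If order-preservation proves awkward to cite, the fallback is triangularity. Write $\Imm_p(x[I,T])\,\Imm_{w_c}(x[I^c,T^c])$ as $x^w$ plus signed monomials $x^\sigma$ with $\varphi(\sigma)>\varphi(w)$, and expand each $x^\sigma=\sum_{w'}P_{\sigma,w'}(1)\Imm_{w'}$. This reduces to the same Bruhat comparison.
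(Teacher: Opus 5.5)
Your part (2), and your reduction of part (1) to showing $A(p)\ge 1$, are the same as in the paper. Getting $A(p)\ge1$ from \eqref{eq:BB} at $\sigma=w$ and at $\sigma=y$ would be a valid alternative to the paper's triangularity argument.

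The gap is the order fact you invoke to close that step. You claim that on a coset, $y\le w$ implies $\varphi(y)\le\varphi(w)$. This is false. Take $n=4$ and $I=T=\{2,3\}$, so the coset is $\{1234,1324,4231,4321\}$. In $S_4$ we have $1324=s_2<4231=s_1s_2s_3s_2s_1$. But $\varphi(1324)=(21,12)$ and $\varphi(4231)=(12,21)$ are incomparable in $S_2\times S_2$. The relation in $S_4$ is realised only by chains that leave the coset. So neither the citation to Billey--Braden nor a rank-matrix check can supply this direction.

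What is true, and what the paper proves, is the converse: $\varphi^{-1}$ is order-preserving, so $\varphi(\sigma)\le\varphi(\sigma')$ implies $\sigma\le\sigma'$. Take an increasing step $s<s\,(a\ b)$ in $S_k$, with $a<b$ and $s(a)<s(b)$. It lifts to $\sigma<\sigma\,(i_a\ i_b)$, where $i_a<i_b$ are the $a$-th and $b$-th elements of $I$. The reason is that standardisation preserves order, so $\sigma(i_a)<\sigma(i_b)$. The same works in $I^c$, and chains in the product order lift one step at a time.

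This direction serves you equally well. The hypothesis $\varphi(y)\ge\varphi(w)$ now gives $y\ge w$, which together with $y\le w$ forces $y=w$, and hence $A(p)=a_w=1$. Your fallback is repaired the same way: $\varphi(\sigma)>\varphi(w)$ gives $\sigma>w$, so $P_{\sigma,w}=0$. The coefficient of $\Imm_w$ is then $f(w)P_{w,w}(1)=1$. That is a slightly more direct form of the paper's argument through a Bruhat-minimal $y$.
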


\begin{proof}
By the proof of Theorem~\ref{thm:R2}, $A(u)=a_y$ for the $y$ with $\varphi(y)=(u,w_c)$. Since the Kazhdan--Lusztig polynomials of $W'$ are products, \eqref{eq:BB} at $\sigma=\sigma_u$ reads
\[
P_{\sigma_u,w}(1)=\sum_ya_y\,P_{u,\varphi(y)_1}(1)\,P_{w_c,\varphi(y)_2}(1),
\]
a sum of non-negative terms. Keeping only the $y$ with $\varphi(y)_2=w_c$ gives~(2).

For~(1), the map $\varphi^{-1}$ is order-preserving from $S_k\times S_{n-k}$ to the coset $\{\sigma:\sigma(I)=T\}$. Indeed, if $s(a)<s(b)$ for $a<b$, then multiplying $s$ on the right by the transposition $(a\ b)$ corresponds to multiplying $\sigma$ on the right by the transposition of the $a$-th and $b$-th elements of $I$, whose entries in $\sigma$ also increase; the same holds in $I^c$. By the definition of Kazhdan--Lusztig immanants, the product $\Imm_p(x[I,T])\Imm_{w_c}(x[I^c,T^c])=\sum_\sigma f(\sigma)x^\sigma$ has $f(\sigma)\ne0$ only if $\sigma_I\ge p$ and $\sigma_{I^c}\ge w_c$, hence only if $\sigma\ge w$, and $f(w)=1$. Write the product as $\sum_yc_y\Imm_y$. Since $\Imm_y$ involves only monomials $x^v$ with $v\ge y$, a Bruhat-minimal $y$ with $c_y\ne0$ has $c_y=f(y)$, so $y\ge w$. Hence $c_y\ne0$ implies $y\ge w$, and comparing the coefficients of $x^w$ gives $A(p)=c_w=f(w)=1$. Finally,~(2) for $u=p$, where $\sigma_p=w$, reads $1+\sum_{u'>p}A(u')P_{p,u'}(1)\le1$. Since $P_{p,u'}(1)\ge1$, $A(u')=0$ for $u'>p$.
\end{proof}

\begin{lemma}\label{lem:smoothswap}
Let $i<j$ with $w(i)>w(j)$, and let $R$ be the set of $k\in(i,j)$ with $w(j)<w(k)<w(i)$. Then the Schubert variety $X_w$ is smooth at $w\,(i\ j)$ iff $w(k)>w(l)$ for all $k<l$ in $R$, that is, iff $w$ has no occurrence of $4231$ in positions $i<k<l<j$. In that case $P_{w(i\,j),w}=1$.
\end{lemma}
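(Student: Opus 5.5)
The plan is to use the Lakshmibai--Seshadri description of the tangent space of $X_w$ at a $T$-fixed point $e_v$, $v\le w$: $\dim T_v X_w=\#\{t \text{ a transposition}: vt\le w\}$, and $X_w$ is smooth at $e_v$ iff this number equals $\ell(w)$. Put $v=w\,(i\ j)$. Since $w(i)>w(j)$, we have $v<w$, and $\ell(w)-\ell(v)=2|R|+1$, because the inversions lost are exactly those involving the positions of $R$ together with the pair $(i,j)$. So smoothness at $v$ means
\[
\#\{t: vt\le w\}=\ell(w)=\ell(v)+2|R|+1.
\]
The transpositions $t$ with $vt<v$ always contribute $\ell(v)$, because $vt<v\le w$. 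The condition is therefore that exactly $2|R|+1$ transpositions $t$ satisfy $vt>v$ and $vt\le w$.

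The key step is to count these upward transpositions via the tableau (rank-matrix) criterion for Bruhat order. The matrices of $v$ and $w$ differ only on the rectangle of positions $[i,j)$ and values $[w(j),w(i))$, where the rank function of $w$ exceeds that of $v$ by one. An upward transposition $vt$ stays $\le w$ iff the rectangle it raises in the rank function lies inside the region where $r_w>r_v$, accounting for the entries of $R$.

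There are always $2|R|+1$ obvious candidates:
\begin{itemize}
\item $t=(i\ j)$ itself, which gives $w$;
\item for each $k\in R$, the transposition $(i\ k)$;
\item for each $k\in R$, the transposition $(k\ j)$.
\end{itemize}
For the last two families, the values satisfy $v(i)=w(j)<w(k)$ and $w(k)<w(i)=v(j)$, and the raised rectangles lie inside the difference region. So the count exceeds $2|R|+1$ exactly when some further $t=(k\ l)$ works. The aim is to show that such a $t$ exists iff there are $k<l$ in $R$ with $w(k)<w(l)$, i.e.\ an occurrence of $4231$ in positions $i<k<l<j$.

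For sufficiency of the pattern, take such a pair $k<l$ with $l-k$ and the value gap minimal. Then $v\,(k\ l)>v$, and its rank rectangle $[k,l)\times[w(k),w(l))$ sits inside $[i,j)\times[w(j),w(i))$ where $r_w-r_v=1$, so $v\,(k\ l)\le w$. For the converse, check that any other upward $t$ that involves positions or values outside the rectangle, or involves only decreasing pairs of $R$, raises the rank somewhere $r_w=r_v$ and so fails to lie below $w$. This case check is the main obstacle. I would organize it by whether $t$ touches $i$ or $j$ and where its values lie relative to $w(j)$ and $w(i)$.

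Finally, for $P_{v,w}=1$: when $X_w$ is smooth at $e_v$, it is rationally smooth there, hence on the whole Bruhat-open neighbourhood of $e_v$ meeting every $e_u$ with $v\le u\le w$. This gives $P_{u,w}=1$ for $v\le u\le w$ by the Kazhdan--Lusztig/Carrell--Peterson criterion, and in particular $P_{w(i\,j),w}=1$.
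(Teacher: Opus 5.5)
Your proposal is correct and is essentially the paper's proof: the Lakshmibai--Seshadri tangent-space count, the identity $r_w-r_v=\mathbf 1_B$ for the rank functions, and the identification of the extra upward transpositions with the ascents of $R$, followed by the standard fact that a smooth point gives $P_{v,w}=1$. The case check you flag as the main obstacle is immediate from your own criterion. An upward swap adds the indicator of its rectangle to $r_v$, so $vt\le w$ iff that rectangle lies in $B$, which holds iff both swapped points of $v$ lie in the closed box $[i,j]\times[w(j),w(i)]$, whose points are $(i,w(j))$, $(j,w(i))$ and $(k,w(k))$ for $k\in R$; no minimality of the pair $k<l$ is needed either.
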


\begin{proof}
Put $x=w\,(i\ j)$ and $r_y(p,v)=\#\{p'\le p:y(p')\ge v\}$. Then $y\le w$ iff $r_y\le r_w$ pointwise \cite[Thm.~2.1.5]{BjBr}. If $p<q$ and $y(p)<y(q)$, then $r_{y(p\,q)}=r_y+\mathbf 1_{[p,q-1]\times[y(p)+1,y(q)]}$. Hence $r_x=r_w-\mathbf 1_B$ with $B=[i,j-1]\times[w(j)+1,w(i)]$, and for $p<q$ with $x(p)<x(q)$,
\[
x\,(p\ q)\le w\iff[p,q-1]\times[x(p)+1,x(q)]\subseteq B,
\]
that is, iff the points $(p,x(p))$ and $(q,x(q))$ both lie in the box $[i,j]\times[w(j),w(i)]$. The points of $x$ in this box are $(i,w(j))$, $(j,w(i))$ and $(k,w(k))$, $k\in R$. So the number of such pairs is $(|R|+1)+|R|+\operatorname{asc}(R)$, where $\operatorname{asc}(R)$ is the number of pairs $k<l$ in $R$ with $w(k)<w(l)$. All $\ell(x)$ transpositions $(p\ q)$ with $x(p)>x(q)$ give elements below $x\le w$. By Lakshmibai and Seshadri \cite{LSe}, the tangent space of $X_w$ at $x$ has dimension $\#\{(p,q):p<q,\ x\,(p\ q)\le w\}=\ell(x)+2|R|+1+\operatorname{asc}(R)$, while $\dim X_w=\ell(w)=\ell(x)+2|R|+1$. So $X_w$ is smooth at $x$ iff $\operatorname{asc}(R)=0$, and then $P_{x,w}=1$ \cite{KL80}. A pair $k<l$ in $R$ with $w(k)<w(l)$ is exactly an occurrence $(i,k,l,j)$ of $4231$.
\end{proof}

\begin{figure}[ht]
\centering
\begin{tikzpicture}[scale=0.55,
  blk/.style={circle,fill=black,inner sep=1.5pt},
  wht/.style={circle,draw=black,fill=white,inner sep=1.5pt}]
  \begin{scope}[xshift=0cm]
    \fill[black!12] (1.5,1.5) rectangle (5.5,5.5);
    \draw[black!35] (0.5,0.5) grid[step=1] (5.5,5.5);
    \node[blk] at (1,1) {};
    \node[blk] at (2,5) {};
    \node[blk] at (3,4) {};
    \node[blk] at (4,3) {};
    \node[blk] at (5,2) {};
    \node[wht] at (2,2) {};
    \node[wht] at (5,5) {};
    \node[font=\tiny] at (1,0.05) {1};
    \node[font=\tiny] at (0.05,1) {1};
    \node[font=\tiny] at (2,0.05) {2};
    \node[font=\tiny] at (0.05,2) {2};
    \node[font=\tiny] at (3,0.05) {3};
    \node[font=\tiny] at (0.05,3) {3};
    \node[font=\tiny] at (4,0.05) {4};
    \node[font=\tiny] at (0.05,4) {4};
    \node[font=\tiny] at (5,0.05) {5};
    \node[font=\tiny] at (0.05,5) {5};
    \node[font=\small] at (3.0,-0.7) {$w=15432$};
  \end{scope}
  \begin{scope}[xshift=8cm]
    \fill[black!12] (1.5,1.5) rectangle (5.5,5.5);
    \draw[black!35] (0.5,0.5) grid[step=1] (5.5,5.5);
    \node[blk] at (1,1) {};
    \node[blk] at (2,5) {};
    \node[blk] at (3,3) {};
    \node[blk] at (4,4) {};
    \node[blk] at (5,2) {};
    \node[wht] at (2,2) {};
    \node[wht] at (5,5) {};
    \node[font=\tiny] at (1,0.05) {1};
    \node[font=\tiny] at (0.05,1) {1};
    \node[font=\tiny] at (2,0.05) {2};
    \node[font=\tiny] at (0.05,2) {2};
    \node[font=\tiny] at (3,0.05) {3};
    \node[font=\tiny] at (0.05,3) {3};
    \node[font=\tiny] at (4,0.05) {4};
    \node[font=\tiny] at (0.05,4) {4};
    \node[font=\tiny] at (5,0.05) {5};
    \node[font=\tiny] at (0.05,5) {5};
    \node[font=\small] at (3.0,-0.7) {$w=15342$};
  \end{scope}
\end{tikzpicture}
\caption{Lemma~\ref{lem:smoothswap} for $i=2$, $j=5$. The point $(p,w(p))$ is drawn in column $p$ and row $w(p)$; black points are $w$, and the hollow points are the two entries of $x=w\,(2\ 5)$ that differ from $w$. The shaded box is $[i,j]\times[w(j),w(i)]$, and $R=\{3,4\}$. Left: $R$ is decreasing, so $\operatorname{asc}(R)=0$, and the tangent space at $x=12435$ has dimension $\ell(x)+2|R|+1=6=\ell(w)$: $X_w$ is smooth at $x$. Right: the ascent $(3,4)$ in $R$ is the occurrence $(2,3,4,5)$ of $4231$, the tangent space at $x=e$ has dimension $0+5+1=6>\ell(w)=5$, and $P_{e,w}=1+q$.}\label{fig:smoothswap}
\end{figure}

We include a proof of Lemma~\ref{lem:smoothswap} for completeness. It is a direct consequence of the tangent space formula; Figure~\ref{fig:smoothswap} shows both cases.

\begin{theorem}\label{thm:isolated}
For every $n$, a permutation $w\in S_n$ is TNN-isolated iff the Schubert variety $X_w$ is smooth, that is, iff $w$ avoids $3412$ and $4231$ \cite{LS}. Equivalently, for every singular $w$ there is an element of $\CTNN(\Pi_n)$ whose coefficient at $\Imm_w$ is negative.
\end{theorem}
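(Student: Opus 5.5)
The direction ``smooth $\Rightarrow$ TNN-isolated'' is Theorem~\ref{thm:skandera}, so only the converse needs proof. Let $w$ be singular, so it contains $3412$ or $4231$. The plan is to lift one of the two elements of $\CTNN(\Pi_4)$ from Proposition~\ref{prop:n4ineq} along an occurrence of the pattern, and to read off the coefficient of $\Imm_w$ with Lemma~\ref{lem:lift}.

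Fix an occurrence in positions $I=\{p_1<p_2<p_3<p_4\}$. Put $T=w(I)$, let $p=w_I\in\{3412,4231\}$ and $w_c=w_{I^c}$. For $E\in\CTNN(\Pi_4)$ consider
\[
H=E(x[I,T])\,\Imm_{w_c}(x[I^c,T^c]).
\]
If $x\in\TNN_n$, both submatrices are totally non-negative. So $E(x[I,T])\ge0$, and $\Imm_{w_c}(x[I^c,T^c])\ge0$ by Rhoades--Skandera. Hence $H\in\CTNN(\Pi_n)$. By linearity, the coefficient of $\Imm_w$ in $H$ is $\sum_u e_uA(u)$, where $E=\sum_ue_u\Imm_u$ and the $A(u)$ are as in Lemma~\ref{lem:lift}.

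\emph{Case 1: $w$ contains $4231$.} Take $p=4231$ and $E=G_\varepsilon$. The coefficient is $\varepsilon A(2143)+\tfrac{3}{4\varepsilon}A(4321)-A(4231)$. By Lemma~\ref{lem:lift}(1), $A(4231)=1$. Since $4321>4231$, the same part gives $A(4321)=0$. The coefficient is therefore $\varepsilon A(2143)-1$. This is negative once $\varepsilon<1/A(2143)$, or for any $\varepsilon$ if $A(2143)=0$.

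\emph{Case 2: $w$ avoids $4231$ and contains $3412$.} Take $p=3412$ and $E=F$. The coefficient is $\tfrac12\bigl(A(1432)+A(3214)\bigr)-1$, so it suffices to show $A(1432)=A(3214)=0$.

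Take $u=1432$. Then $\sigma_u$ agrees with $w$ except in positions $p_1,p_3$, where the values $v_3,v_1$ are exchanged. So $\sigma_u=w\,(p_1\ p_3)$ with $w(p_1)>w(p_3)$, and similarly $\sigma_{3214}=w\,(p_2\ p_4)$. Apply Lemma~\ref{lem:lift}(2) with $u'=3412>u$, using $A(3412)=1$ and $P_{u,3412}(1)\ge1$. This gives
\[
A(u)+1\le P_{\sigma_u,w}(1).
\]
Since $w$ avoids $4231$, Lemma~\ref{lem:smoothswap} gives $P_{\sigma_u,w}=1$, hence $A(u)=0$. The same argument applies to $u=3214$. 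The coefficient of $\Imm_w$ in $H$ is then $-1$.

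The only delicate point is Case~2, where the inequality of Lemma~\ref{lem:lift}(2) must be combined with $P_{\sigma_u,w}(1)=1$. This is exactly why the $4231$ case is handled first: once $w$ avoids $4231$, the criterion of Lemma~\ref{lem:smoothswap} holds automatically for every inverted pair. The stated equivalence with pattern avoidance is the Lakshmibai--Sandhya theorem \cite{LS}.
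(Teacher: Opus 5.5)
Your proposal is correct and follows the paper's proof essentially step for step. You use the same lift $H=E(x[I,T])\,\Imm_{w_c}(x[I^c,T^c])$, applied to $G_\varepsilon$ along an occurrence of $4231$ and to $F$ along an occurrence of $3412$, with Lemma~\ref{lem:lift}(1) giving $A(4231)=1$ and $A(4321)=0$, and Lemma~\ref{lem:lift}(2) together with Lemma~\ref{lem:smoothswap} forcing $A(1432)=A(3214)=0$ when $w$ avoids $4231$.
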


\begin{proof}
Smooth permutations are TNN-isolated by Theorem~\ref{thm:skandera}. Let $w$ be singular. For $W=\sum_uc_u\Imm_u\in\CTNN(\Pi_4)$ and an occurrence of a pattern of length $4$ in positions $I$, put $H=W(x[I,T])\,\Imm_{w_c}(x[I^c,T^c])$. Submatrices of totally non-negative matrices are totally non-negative, and $\Imm_{w_c}\ge0$ on $\TNN_{n-4}$, so $H\in\CTNN(\Pi_n)$. Its coefficient at $\Imm_w$ is $\sum_uc_uA(u)$.

If $w$ contains $4231$, take an occurrence of $4231$ and $W=G_\varepsilon$. By Lemma~\ref{lem:lift}(1), with $4321>4231$, the coefficient is $\varepsilon A(2143)+\tfrac3{4\varepsilon}A(4321)-A(4231)=\varepsilon A(2143)-1$, which is negative for small $\varepsilon>0$.

Otherwise $w$ contains $3412$ and avoids $4231$. Take any occurrence $i_1<i_2<i_3<i_4$ of $3412$ and $W=F$. Here $\sigma_{1432}=w\,(i_1\ i_3)$ and $\sigma_{3214}=w\,(i_2\ i_4)$. Since $w$ avoids $4231$, Lemma~\ref{lem:smoothswap} gives $P_{\sigma_{1432},w}=P_{\sigma_{3214},w}=1$. Lemma~\ref{lem:lift}(2) for $u=1432<3412$, together with $A(3412)=1$ and $P_{1432,3412}=1$, gives $A(1432)+1\le1$, so $A(1432)=0$, and likewise $A(3214)=0$. The coefficient is $\tfrac12A(1432)+\tfrac12A(3214)-A(3412)=-1$.
\end{proof}

\begin{exa}\label{exa:lift}
Let $w=45312$, one of the permutations of Theorem~\ref{thm:beyond}. It avoids $4231$ and contains $3412$ in the positions $I=\{1,2,4,5\}$, with $T=w(I)=\{1,2,4,5\}$, $p=3412$ and $w_c=e\in S_1$ (Figure~\ref{fig:lift}). Then $\sigma_{1432}=w\,(1\ 4)=15342$ and $\sigma_{3214}=w\,(2\ 5)=42315$, and $P_{\sigma_{1432},w}=P_{\sigma_{3214},w}=1$, as Lemma~\ref{lem:smoothswap} predicts. Computing the products directly, the coefficient $A(u)$ of $\Imm_w$ in $\Imm_u(x[I,T])\,x_{33}$ is $1$ for $u=3412$ and $0$ for every other $u\in S_4$. So
\[
H=\bigl(\tfrac12\Imm_{1432}+\tfrac12\Imm_{3214}-\Imm_{3412}\bigr)(x[I,T])\;x_{33}
\]
lies in $\CTNN(\Pi_5)$, and its coefficient at $\Imm_{45312}$ is $-1$; it is the only negative coefficient of $H$. For the $4231$ case, $w=52431$ contains $4231$ in the positions $\{1,2,4,5\}$, and there $A(4231)=1$ and $A(u)=0$ for $u\ne4231$, so the lift of $G_\varepsilon$ has coefficient $-1$ at $\Imm_{52431}$ for every $\varepsilon>0$.
\end{exa}

\begin{figure}[ht]
\centering
\begin{tikzpicture}[scale=0.5,
  blk/.style={circle,fill=black,inner sep=1.5pt},
  wht/.style={circle,draw=black,fill=white,inner sep=1.5pt}]
  \begin{scope}[xshift=0cm]
    \draw[black!35] (0.5,0.5) grid[step=1] (5.5,5.5);
    \node[blk] at (1,4) {};
    \node[blk] at (2,5) {};
    \node[blk] at (3,3) {};
    \node[blk] at (4,1) {};
    \node[blk] at (5,2) {};
    \node[font=\tiny] at (1,0.05) {1};
    \node[font=\tiny] at (0.05,1) {1};
    \node[font=\tiny] at (2,0.05) {2};
    \node[font=\tiny] at (0.05,2) {2};
    \node[font=\tiny] at (3,0.05) {3};
    \node[font=\tiny] at (0.05,3) {3};
    \node[font=\tiny] at (4,0.05) {4};
    \node[font=\tiny] at (0.05,4) {4};
    \node[font=\tiny] at (5,0.05) {5};
    \node[font=\tiny] at (0.05,5) {5};
    \node[font=\small] at (3.0,-0.7) {$w=45312$};
    \foreach \p in {1,2,4,5}{\draw[thick,black!60] (\p,0.5)--(\p,5.5);}
    \foreach \v in {1,2,4,5}{\draw[thick,black!60,dashed] (0.5,\v)--(5.5,\v);}
  \end{scope}
\end{tikzpicture}
\caption{The occurrence of $3412$ in $w=45312$ used in Example~\ref{exa:lift}: positions $I=\{1,2,4,5\}$ (solid lines) and values $T=w(I)=\{1,2,4,5\}$ (dashed lines). The remaining point $(3,3)$ gives $w_c=e\in S_1$.}\label{fig:lift}
\end{figure}

So the full-span cones differ in the coordinate of every singular $w$, while for class functions $\CKL=\CTNN$ when $n\le10$ \cite{McN}. The companion paper \cite{McN} proves the cases $n=4,5$ of Theorem~\ref{thm:isolated} by a different method, a vertex criterion on Newton polytopes. It also checks the proof by exact computation. For every singular $w$ with $n\le10$, the lifted element ($2F$ for $3412$, and $\Imm_{2143}+\Imm_{4321}-\Imm_{4231}=G_1+\frac14\Imm_{4321}$ for $4231$) has a negative coefficient at $\Imm_w$ already at the first occurrence tried. Lemma~\ref{lem:smoothswap} is checked for every inversion of every $w$ with $n\le10$. For $4231$ the proof works at every occurrence. For $3412$ the elements $\varepsilon\Imm_{1432}+\frac1{4\varepsilon}\Imm_{3214}-\Imm_{3412}$, which lie in $\CTNN(\Pi_4)$ by Proposition~\ref{prop:n4ineq}, show that it suffices that one of $w\,(i_1\ i_3)$ and $w\,(i_2\ i_4)$ is a smooth point of $X_w$.

\section{Conjectures and open problems}\label{sec:open}

\subsection{Explicit expansions for hooks}
Theorem~\ref{thm:main} does not make the Kazhdan--Lusztig coefficients of $f^{(a)}_i$ explicit, except for $a=2$. Computation suggests that for $a\ge2$, $f^{(a)}_i(\Cp w)=0$ unless $w$ moves a single block of consecutive letters containing $i$ (observed for $n\le8$). For $a=3$, \eqref{eq:cycle} gives $f^{(3)}_i(\Cp w)=\sum_{j\ne i}B_{ij}(w)-f^{(2)}_i(\Cp w)$, where $B_{ij}(w)\ge0$ is the coefficient of $\Imm_w$ in $x_{ij}x_{ji}\det x[\{i,j\}^c,\{i,j\}^c]$.

\begin{conjecture}\label{conj:B}
Let $i<j$.
\begin{enumerate}
\item If $j-i\ge2$, let $Q=\{i-1,i,j,j+1\}\cap[n]$. Then $B_{ij}(w)=1$ if $w$ fixes $[n]\setminus Q$ and its pattern on $Q$ lies in the Bruhat interval $[(i\;j),(\min Q\;\max Q)]$ of $S_{|Q|}$, and $B_{ij}(w)=0$ otherwise. For interior pairs this interval is $[1324,4231]$, with $16$ elements.
\item If $j=i+1$, $B_{ij}$ equals $1$ on the patterns $\ge(i\;i{+}1)$ on $\{i-1,\dots,i+2\}$, except the longest element when $|Q|=4$, and $2$ on the pattern $3412$. It is also $1$ on the $5$-cycles
  \begin{gather*}
  (i{-}2,i,i{+}2,i{-}1,i{+}1),\ (i{-}2,i{+}1,i{-}1,i{+}2,i)\qquad (i\ge3),\\
  (i{-}1,i{+}1,i{+}3,i,i{+}2),\ (i{-}1,i{+}2,i,i{+}3,i{+}1)\qquad (i+3\le n),
  \end{gather*}
  and $0$ elsewhere.
\end{enumerate}
\end{conjecture}

Table~\ref{tab:Badj} lists the support of $B_{i,i+1}$ in the interior case $3\le i\le n-3$, where all three kinds of term occur.

\begin{table}[ht]
\centering\small
\begin{tabular}{@{}lll@{}}
\toprule
$w$ & $B_{i,i+1}(w)$ & number\\
\midrule
fixes $[n]\setminus Q$, pattern on $Q$ in $[1324,4321)$, not $3412$ & $1$ & $18$\\[2pt]
\multicolumn{3}{@{}l@{}}{\quad $1324,1342,1423,1432,2314,2341,2413,2431,3124,$}\\
\multicolumn{3}{@{}l@{}}{\quad $3142,3214,3241,3421,4123,4132,4213,4231,4312$}\\[2pt]
fixes $[n]\setminus Q$, pattern $3412$ on $Q$ & $2$ & $1$\\
one of the four $5$-cycles of Conjecture~\ref{conj:B}(2) & $1$ & $4$\\
\midrule
any other $w$ & $0$ & \\
\bottomrule
\end{tabular}
\caption{The conjectured support of $B_{i,i+1}$ for $3\le i\le n-3$, with $Q=\{i-1,i,i+1,i+2\}$. For $n=6$, $i=3$ it has $23$ elements, and the four $5$-cycles are $345126$, $451236$, $145623$, $156234$.}\label{tab:Badj}
\end{table}

This matches the computation for every pair $(i,j)$ with $n\le8$. Two parts are proved: $B_{ij}(w)=0$ if $w$ has a left or right descent $s_k$ with $\{k,k+1\}\cap\{i,j\}=\emptyset$; and $B_{ij}$ is supported on minimal double coset representatives for the Young subgroup with blocks $[1,i-1],\{i\},[i+1,j-1],\{j\},[j+1,n]$. With Lemma~\ref{lem:a2}, Conjecture~\ref{conj:B} gives a closed form for $\Imm_{(3,1^{n-3})}$ in the Kazhdan--Lusztig basis. It would follow from an inertness property: for a cycle $\gamma$ on $T\subset[n]$, a point $k\notin T$ at distance at least $2$ from $T$ appears to be a fixed point of every $w$ in the support of $x_\gamma\det x[T^c,T^c]$, with deletion of $k$ giving the corresponding coefficients in $S_{n-1}$, if $\min T<k<\max T$, or no two elements of $T$ are adjacent, or $\gamma$ is an adjacent transposition and $k$ is at distance at least $3$ (checked for $2$- and $3$-cycles with $n\le8$).

\subsection{A rooted refinement}
Cut the cycles of $\sigma\in S_n$ into \emph{arcs}, runs $v,\sigma(v),\dots,\sigma^{l-1}(v)$ of consecutive elements of a cycle. Expanding each $p_k$ in the $h$-basis by brick tabloids gives $\phi^\lambda(\sigma)=(-1)^{n-\ell(\lambda)}\sgn(\sigma)N_\lambda(\sigma)$, where $N_\lambda(\sigma)$ is the number of cuttings into arcs of lengths $\lambda$. For a vertex $v$, an integer $a\ge1$ and $\mu\vdash n-a$, let
$G^{(a;\mu)}_v(\sigma)=(-1)^{n-1-\ell(\mu)}\sgn(\sigma)M$, where $M$ is the number of cuttings with an arc of length $a$ starting at $v$ and the other arcs of lengths $\mu$, the arcs of equal length $\ge2$ being ordered. Sorting the cuttings by the arc through $v$ gives, for every $v$,
\[
\phi^\lambda=\sum_{a}\frac{a}{\operatorname{aut}(\lambda\setminus a)}\,G^{(a;\lambda\setminus a)}_v,
\]
the sum over the distinct parts $a$ of $\lambda$, with $\operatorname{aut}(\mu)$ the product of the factorials of the multiplicities of the parts $\ge2$ of $\mu$. For hooks and $a=\lambda_1$ these are the per-vertex functions of Theorem~\ref{thm:main}.

\begin{conjecture}\label{conj:rooted}
For $v\in\{1,n\}$, every $a\ge1$, every $\mu\vdash n-a$ and every $w\in S_n$, $G^{(a;\mu)}_v(\Cp w)\ge0$.
\end{conjecture}

Conjecture~\ref{conj:rooted} implies Conjecture~\ref{conj:H}. It is a combinatorial form, at $q=1$, of a conjecture of Abreu and Nigro. For every $w$, the decomposition theorem for the map $Y_w(X)\to\mathbb P^{n-1}$, $V_\bullet\mapsto V_1$, from a regular semisimple Lusztig variety splits $\ch IH^*(Y_w(X))=\ch(q^{\ell(w)/2}C'_w)$ as $\sum_i[n-i]_q h_{n-i}\ch(\rho_{i,w})$ for representations $\rho_{i,w}$ of $S_i$ \cite[Cor.~1.9]{AN22b}, and they conjecture that the local systems come from permutation representations \cite[Conj.~1.4]{AN22b}. Our decomposition has the same shape, $\ch(\Cp w)=\sum_a a\,h_a\sum_\mu G^{(a;\mu)}_1(\Cp w)\operatorname{aut}(\mu)^{-1}h_\mu$. For codominant $w$ the pieces agree with Abreu--Nigro's $g$-functions \cite{AN23} (checked for $n\le6$, and in graded form for $n\le5$), which are $e$-positive at $q=1$ by Huh, Hwang, Kim, Kim and Oh \cite{HHKKO}. So Conjecture~\ref{conj:rooted} holds for codominant $w$. We checked it for every $w$ with $n\le8$. At the interior vertices $1<v<n$ it fails, already for $n=3$. The pieces $G^{(a;\mu)}_1(\Cp w)/\operatorname{aut}(\mu)$ are integers for $n\le6$, and for $n\le6$ they lift to palindromic graded pieces in $\N[q]$ compatible with $\ch(q^{\ell(w)/2}C'_w)$, as the decomposition theorem predicts. Two structural facts hold for every $n$. If $sw<w<ws$ and the step is clean, then $G^{(a;\mu)}_v(\Cp{ws})=G^{(a;\mu)}_v(\Cp w)+G^{(a;\mu)}_{s(v)}(\Cp w)$, because $\Cp w\,s=s\,\Cp w\,s$ and conjugation by $s$ relabels the vertices. For $s\ne s_1$, which fixes vertex $1$, the functions $G^{(a;\mu)}_1$ satisfy the analogue at $q=1$ of the relation \eqref{eq:AN}. For $n\le6$ these relations determine them from their values at codominant $w$.

\subsection{Problems}
Conjectures~\ref{conj:B} and~\ref{conj:rooted} are concrete proposals supported by computation. Of the problems below, the first four are the main directions; the others are narrower or more exploratory targets.
\begin{enumerate}
\item\label{prob:KT} Find a structural reason why, for class functions, KL-positivity and TNN-positivity should agree ($\CKL=\CTNN$), independently of Conjecture~\ref{conj:H}. This would make Conjectures~\ref{conj:MN} and~\ref{conj:H} equivalent. Averaging does not obviously help: $\TNN_n$ is not closed under conjugation by permutation matrices, and the evident symmetries preserving both $\TNN_n$ and the Kazhdan--Lusztig immanants, $x\mapsto x^T$ and $x\mapsto w_0xw_0$ (acting by $w\mapsto w^{-1}$ and $w\mapsto w_0ww_0$), form a group of order $4$, and averaging over it does not produce class functions.
\item Prove Conjecture~\ref{conj:rooted}, for example by a restricted modular law for the vertex-$1$ functions in the spirit of \cite{HHKKO}, or by identifying the permutation representations of \cite[Conj.~1.4]{AN22b}. The hook-style induction of Section~\ref{sec:hooks} does not extend: for non-hook $\lambda$ the coset pieces of $G^{(a;\mu)}_v$ are not KL-positive.
\item\label{prob:q} Prove Haiman's $q$-version for hooks, $\phi^{(a,1^{n-a})}_q(q^{\ell(w)/2}C'_w(q))\in\N[q]$. The proof of Theorem~\ref{thm:main} works at $q=1$ only. These polynomials are automatically symmetric about $q^{\ell(w)/2}$, because $C'_w$ is invariant under the bar involution; the full conjecture \cite[Conj.~2.1]{Hai} also asks that they be unimodal. Haiman verified both for $n\le7$ \cite{Hai}, and the companion paper for $n\le10$ \cite{McN}.
\item\label{prob:clean} Clean steps.
  \begin{enumerate}
  \item Describe the singular $w$ that are not the top of a clean step, and prove $c_{\lambda,w}\ge0$ for them; by Theorem~\ref{thm:clean} this is equivalent to Conjecture~\ref{conj:H}.
  \item Replace the condition $\deg P_{e,w}\le1$ in Proposition~\ref{prop:cleancrit} by a pattern condition.
  \end{enumerate}
\item Prove $\phi^{(3,2^b,1^c)}(\Cp w)\ge0$ for $321$-avoiding $w$ containing both $312$ and $231$; by Proposition~\ref{prop:onethree} this is Conjecture~\ref{conj:H} for these shapes.
\item\label{prob:theta} Prove that $\theta_\ell=\sum_{\ell(\mu)=\ell}\phi^\mu$ is KL-positive for every $n$ (Skandera's Problem~30 \cite{Ska25}, at $q=1$). It holds at smooth $w$ by Proposition~\ref{thm:smooth}, and for $n\le10$, since $\theta_\ell$ is a sum of monomial characters, by \cite{McN}. Since $\sum_\ell t^\ell\theta_\ell(\sigma)=\prod_{C}(1-(1-t)^{|C|})$ over the cycles $C$ of $\sigma$, $\theta_\ell=\sum_{|T|=n-\ell}g_T$ with $g_T(\sigma)=(-1)^{|T|}\sgn(\sigma)[\text{no cycle of }\sigma\text{ lies in }T]$, and every $g_T$ is KL-positive for $n\le8$.
\item Describe the cone $\CTNN(\Pi_n)$, for example its extreme rays (Stembridge \cite[Question~5.4]{Ste92}; Skandera \cite[Problem~5.8]{Ska08}). By Theorem~\ref{thm:isolated}, $\CKL(\Pi_n)$ and $\CTNN(\Pi_n)$ differ in the coordinate of every singular $w$. A first step is to find the extreme rays for $n=4,5$ and look for families that persist for all $n$.
\end{enumerate}

\section*{Declaration of generative AI and AI-assisted technologies in the manuscript preparation process}
During the preparation of this work the author used Claude (Anthropic), through Claude Code, to write and test the software, to run and cross-check the computations, to search and summarise the literature, to check arguments against computed examples, and to draft and edit the text. After using this tool, the author reviewed and edited the content as needed and takes full responsibility for the content of the published article.

\end{document}